\numberwithin{equation}{section}
\newcommand{\margnote}[1]{
\ifthenelse{\boolean{shownotes}}%
{\marginpar{\raggedright\tiny\texttt{#1}}}%
{}%
}
\theoremstyle{plain}
\newtheorem{theorem}{Theorem}[section]
\newtheorem{lemma}[theorem]{Lemma}
\newtheorem{proposition}[theorem]{Proposition}
\newtheorem{prop}[theorem]{Proposition}
\newtheorem{corollary}[theorem]{Corollary}
\theoremstyle{definition}
\newtheorem{remark}[theorem]{Remark}
\newcommand{\R}{\mathbb{R}}
\newcommand{\C}{\mathbb{C}}
\newcommand{\A}{\mathbb{A}}
\newcommand{\Real}{\textrm{\rm Re}\,}
\newcommand{\Imag}{\textrm{\rm Im}\,}
\renewcommand{\Re}{\Real}
\renewcommand{\Im}{\Imag}
\newcommand{\cO}{\mathcal{O}}
\newcommand{\cG}{\mathcal{G}}
\newcommand{\cK}{\mathcal{K}}
\newcommand{\cS}{\mathcal{S}}
\newcommand{\cL}{\mathcal{L}}
\newcommand{\cJ}{\mathcal{J}}
\newcommand{\cV}{\mathcal{V}}
\newcommand{\cF}{\mathcal{F}}
\newcommand{\cU}{\mathcal{U}}
\newcommand{\bA}{\bar A}
\newcommand{\bL}{\bar L}
\newcommand{\half}{\tfrac{1}{2}}
\newcommand{\iprod}[1]{\langle{#1}\rangle}
\newcommand{\dt}{{\frac{d}{dt}}}
\newcommand{\Dxk}{{\partial^k_x}}
\newcommand{\cE}{\mathcal{E}}
\newcommand{\RR}{\mathbb{R}}
\newcommand{\hole}[1]{
\ifthenelse{\boolean{shownotes}}%
{\begin{center} \fbox{ \rule {.25cm}{0cm}
\rule[-.1cm]{0cm}{.4cm} \parbox{.85\textwidth}{\begin{center}
\texttt{#1}\end{center}} \rule {.25cm}{0cm}}\end{center}}
{}
}
\begin{document}

\title[Stability of radiative shock profiles]{Stability of radiative shock profiles for hyperbolic-elliptic coupled systems}

\author[T. Nguyen]{Toan Nguyen}

\address{Department of Mathematics\\Indiana University\\Bloomington,
IN 47405 (U.S.A.)}

\email{nguyentt@indiana.edu}

\author[R. G. Plaza]{Ram\'on G. Plaza}

\address{Departamento de Matem\'aticas y
Mec\'anica\\IIMAS-UNAM\\Apdo. Postal 20-726, C.P. 01000 M\'exico
D.F. (M\'exico)}

\email{plaza@mym.iimas.unam.mx}

\author[K. Zumbrun]{Kevin Zumbrun}

\address{Department of Mathematics\\Indiana University\\Bloomington,
IN 47405 (U.S.A.)}

\email{kzumbrun@indiana.edu}

\date{\today}
%\date{\today; Last Updated:  July 10, 2008}

%\thanks{This work was supported in part by the National Science Foundation award number DMS-0300487.
%%CHANGED
%RGP is warmly grateful to the Department of Mathematics, Indiana University, for their hospitality and financial support during two short visits in May 2008 and April 2009, when this research was carried out.
%The research of RGP was partially supported by DGAPA-UNAM through the program PAPIIT, grant IN-109008.
%}
\thanks{The research of TN and KZ was supported in part by the National
Science Foundation, award number DMS-0300487.
The research of RGP was partially supported by DGAPA-UNAM through the
program PAPIIT, grant IN-109008.
RGP is warmly grateful to the
Department of Mathematics, Indiana University, for their hospitality and
financial support during two short visits in May 2008 and April 2009,
when this research  was carried out.
TN, RGP, and KZ are warmly grateful to Corrado Lattanzio and Corrado Mascia for their interest in this work and for many helpful conversations,
as well as their collaboration in concurrent work on the scalar case.
}

\begin{abstract}
Extending previous work with Lattanzio and Mascia on the scalar
(in fluid-dynamical variables) Hamer model for a radiative gas,
we show nonlinear orbital asymptotic stability of small-amplitude
shock profiles of general systems of coupled hyperbolic--eliptic
equations of the type modeling a radiative gas,
that is, systems of conservation laws coupled with an elliptic equation
for the radiation flux,
including in particular the standard Euler--Poisson model for
a radiating gas.
The method is based on the derivation of pointwise Green function bounds and
description of  the linearized  solution operator,
with the main difficulty being the construction of
the resolvent kernel in the case of an eigenvalue system of
equations of degenerate type.
Nonlinear stability then follows in standard fashion by linear
estimates derived from these pointwise bounds, combined with
nonlinear-damping type energy estimates.
\end{abstract}

\maketitle

%%%%%%%%%%%%%%%%%%%%%%%%%%%%%%%%%%%%%%%%%%%%%%%%%%%%%%%%%%%%
%%%  Specify depth of table of contents
%%%  0 -- include only chapters
%%%  1 -- include chapters and sections
%%%  2 -- include chapter, sections, and subsections
%%%%%%%%%%%%%%%%%%%%%%%%%%%%%%%%%%%%%%%%%%%%%%%%%%%%%%%%%%%%

\setcounter{tocdepth}{1}
%%%%%%%%%%%%%%%%%%%\tableofcontents
%%%%%%%%%%%%%%%%%

%%% To be deleted afterwards. This option puts all biblioggraphy references in the bib file.
%%%%%%%%%%%%%%%%%%%%%%%\nocite{*}

%\newcommand{\iprod}[1]{\langle{#1}\rangle}
%\newcommand{\wprod}[1]{\langle{#1}\rangle}

%%%%%%%%%%%%%%%%%%%%%%%%%%%%%%%%%%%%%%%%%%%%%%%%%%%%%%%%%%%%%%%%%%%%%%%%%%%%%%%%%%%%%%%%%%%%%%%%%%%%%%%%%%%%%%%
%\setcounter{section}{2}
%\setcounter{equation}{0}
\section{Introduction}\label{sec:intro}

In the theory of non-equilibrium radiative hydrodynamics, it is often assumed that an inviscid compressible fluid interacts with radiation through energy exchanges.
One widely accepted model \cite{VK} considers the one dimensional Euler
system of equations coupled with an elliptic equation for the radiative
energy,
%CHANGED: new
 or {\it Euler--Poisson equation}.
%ENDCHANGED
With this system in mind, this paper considers \textit{general hyperbolic-elliptic coupled systems} of the form,
\begin{equation}
\begin{aligned}
u_{t}+ f(u)_{x} + L q_{x} &= 0,\\
-q_{xx} + q + g(u)_{x} &=0,
\end{aligned}
\label{eq:systemold}
\end{equation}
with $(x,t) \in \R \times [0,+\infty)$ denoting space and time, respectively, and where the unknowns $u\in \cU \subseteq \R^n$, $n \geq 1$, play the role of state variables,
%CHANGED
%and
whereas
%ENDCHANGED
$q\in \R$ represents a general heat flux. In addition, $L \in \R^{n \times 1}$ is a constant vector, and $f \in C^2(\cU ; \R^n)$ and $g \in C^2(\cU;\R)$ are nonlinear vector- and scalar-valued flux functions, respectively.

The study of general systems like \eqref{eq:systemold} has been the subject of active research in recent years \cite{KaNN1,KaNN2,KN4,LMS1}. There
%CHANGED
%are,
exist,
%ENDCHANGED
however, more complete results regarding the simplified model of a radiating gas, also known as the \textit{Hamer model} \cite{Hm}, consisting of a scalar velocity equation (usually endowed with a Burgers' flux function which approximates the Euler system), coupled with a scalar elliptic equation for the heat flux. Following the authors' concurrent analysis
%CHANGED
with Lattanzio and Mascia of the
%ENDCHANGED
reduced scalar model \cite{LMNPZ1}, this work studies the asymptotic stability of \textit{general radiative shock profiles}, which are traveling wave solutions to system \eqref{eq:systemold} of the form
\begin{equation}
\label{tws}
u(x,t) = U(x - st), \qquad q(x,t) = Q(x-st),
\end{equation}
with asymptotic limits
\[
U(x) \to u_\pm, \qquad Q(x) \to 0, \qquad \text{as} \; \; x \to \pm \infty,
\]
being $u_\pm \in \cU \subseteq \R^n$ constant states and $s \in \R$ the shock speed. The main assumption is that the triple $(u_+,u_-,s)$ constitutes a shock front \cite{La1} for the underlying ``inviscid" system of conservation laws
\begin{equation}
\label{hscl}
u_t + f(u)_x = 0,
\end{equation}
satisfying canonical jump conditions of Rankine-Hugoniot type,
\begin{equation}
\label{RH}
f(u_+) - f(u_-) - s(u_+ - u_-) = 0,
\end{equation}
plus classical Lax entropy conditions. In the sequel we denote the jacobians of the nonlinear flux functions as
\[
A(u) := Df(u) \in \R^{n \times n}, \qquad B(u) := Dg(u) \in \R^{1 \times n}, \qquad u \in \cU.
\]
Right and left eigenvectors of $A$ will be denoted as $r \in \R^{n \times 1}$ and $l \in \R^{1 \times n}$, and we suppose that system \eqref{hscl} is hyperbolic, so that $A$ has real eigenvalues
%CHANGED:
%$a_1<\dots < a_n$.
$a_1\leq \dots \leq a_n$.
%ENDCHANGED:

It is assumed that system \eqref{eq:systemold} represents some sort of regularization of the inviscid system \eqref{hscl} in the following sense. Formally, if we eliminate the $q$ variable, then we end up with a system of form
\[
u_t + f(u)_x = (LB(u)u_x)_x + (u_t + f(u)_x)_{xx},
\]
which requires a nondegeneracy hypothesis
\begin{equation}
\label{eq:mainassump}
l_p \cdot (B \otimes L^\top r_p) > 0,
\end{equation}
for some $1 \leq p \leq n$, in order to provide a good dissipation term along the $p$-th characteristic field in its Chapman-Enskog expansion \cite{ST}.

\smallskip

More precisely, we make the following structural assumptions:

\begin{equation}
\label{S0}
f, g \in C^2 \quad \text{(regularity)},
\tag{S0}
\end{equation}

\begin{equation}
\label{S1}
\tag{S1}
\begin{minipage}[c]{4.5in}
For all $u \in \cU$ there exists $A_0$ symmetric, positive definite such that
$A_0 A$ is symmetric, and $A_0LB$ is symmetric, positive semi-definite of rank one (symmetric dissipativity $\Rightarrow$ non-strict hyperbolicity). Moreover, we assume that the principal eigenvalue $a_p$ of $A$ is simple.
\end{minipage}
\end{equation}

\begin{equation}
\label{S2}
\tag{S2}
\begin{minipage}[c]{4.5in}
No eigenvector of $A$ lies in $\ker LB$ (genuine coupling).
\end{minipage}
\end{equation}

\begin{remark}
Assumption \eqref{S1} assures non-strict hyperbolicty of the system, with simple principal characteristic field. Notice that \eqref{S1} also implies that $(A_0)^{1/2}A(A_0)^{-1/2}$ is symmetric, with real and semi-simple spectrum, and that, likewise, $(A_0)^{1/2}B(A_0)^{-1/2}$ preserves symmetric positive semi-definiteness with rank one. Assumption \eqref{S2} defines a general class of hyperbolic-elliptic equations analogous to the class defined by Kawashima and Shizuta \cite{KaTh,KaSh1,ShKa1} and compatible with \eqref{eq:mainassump}. Moreover, there is an equivalent condition to \eqref{S2} given by the following
\begin{lemma}[Shizuta--Kawashima \cite{KaSh1,ShKa1}]
\label{lemmaKaw}
Under \eqref{S0} - \eqref{S1}, assumption \eqref{S2} is equivalent to the existence of a skew-symmetric matrix valued function $K : \cU \to \R^{n \times n}$ such that
\begin{equation}
\label{KALB}
\Re (KA + A_0LB) > 0,
\end{equation}
for all $u \in \cU$.
\end{lemma}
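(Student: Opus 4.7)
The plan is to handle the two implications separately, after first reducing to a symmetric setting. Conjugating by $A_0^{1/2}$ and setting $\hat A := A_0^{1/2} A A_0^{-1/2}$, $\hat D := A_0^{-1/2}(A_0 L B) A_0^{-1/2}$, assumption \eqref{S1} gives that $\hat A$ is symmetric and $\hat D$ is symmetric, positive semi-definite, of rank one; the desired condition $\Re(KA + A_0 L B) > 0$ becomes the analogous condition $\Re(\hat K \hat A) + \hat D > 0$ for an appropriately transformed skew-symmetric $\hat K$, which by symmetry of $\hat A$ reduces to $\tfrac12(\hat K\hat A - \hat A\hat K) + \hat D > 0$. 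From here on I would assume without loss of generality that $A$ is symmetric and $D := A_0 L B$ is symmetric, positive semi-definite, of rank one.

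For the easy direction $(\Leftarrow)$, I would argue by contradiction: suppose such a $K$ exists and $r \ne 0$ is an eigenvector of $A$ with $A r = a r$ and $L B r = 0$ (equivalently $D r = 0$). Skew-symmetry of $K$ gives $\iprod{r, K r} = 0$, so using $A^\top = A$,
\[
\iprod{r, K A r} = a\iprod{r, K r} = 0, \qquad \iprod{r, A K r} = \iprod{A r, K r} = a\iprod{r, K r} = 0,
\]
whence $\iprod{r, \Re(K A) r} = 0$. Together with $\iprod{r, D r} = 0$ this contradicts $\Re(KA + D) > 0$.

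For the nontrivial direction $(\Rightarrow)$, the first step is to observe that rank-one-ness of $D$ together with \eqref{S2} forces \emph{every} eigenvalue of $A$ to be simple: any eigenspace of dimension $\ge 2$ would, by dimension count, intersect the $(n-1)$-dimensional subspace $\ker D = \ker L B$, producing an eigenvector in violation of \eqref{S2}. Choose an orthonormal eigenbasis $\{r_1,\dots,r_n\}$ with $A r_k = a_k r_k$ and (now strictly distinct) $a_1 < \dots < a_n$, and write the rank-one $D$ as $D = v v^\top$ with $v = \sum_k \alpha_k r_k$; \eqref{S2} then forces every $\alpha_k \ne 0$. For any skew-symmetric $K = (\kappa_{jk})$ in this basis, a direct computation shows that $\tfrac12(K A - A K)$ has off-diagonal entries $\tfrac12 \kappa_{jk}(a_k - a_j)$ and vanishing diagonal, while $D$ has entries $\alpha_j \alpha_k$. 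The plan is then to take the explicit choice
\[
\kappa_{jk} := -\frac{2 \alpha_j \alpha_k}{a_k - a_j}, \qquad j \ne k,
\]
which is well defined by simplicity, manifestly skew-symmetric, and cancels the off-diagonal part of $D$ exactly, leaving $\Re(K A + D) = \operatorname{diag}(\alpha_1^2,\dots,\alpha_n^2) > 0$.

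The main obstacle is the simplicity observation above: \eqref{S1} only assumes simplicity of the \emph{principal} eigenvalue $a_p$, and one has to combine \eqref{S2} with the rank-one structure of $D$ to upgrade this to simplicity of the entire spectrum of $A$. Once this is in hand, the construction of $K$ becomes a purely algebraic diagonal-cancellation problem in the eigenbasis, and the equivalence follows.
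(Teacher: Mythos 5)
Your proof is correct, but note that the paper itself offers no proof at all for this lemma: it simply says ``See, e.g., \cite{HuZ1},'' deferring to the literature on the Shizuta--Kawashima condition. What you have written is therefore a genuine, self-contained argument rather than a variant of the paper's. Both directions check out: the symmetrizing congruence by $A_0^{1/2}$ correctly carries $(A,A_0LB,K)$ to $(\hat A,\hat D,\hat K)$ while preserving skew-symmetry of $K$ and positive-definiteness of $\Re(KA+A_0LB)$; the backward direction is the standard quadratic-form evaluation at a bad eigenvector; and in the forward direction your explicit $\kappa_{jk}=-2\alpha_j\alpha_k/(a_k-a_j)$ cancels the off-diagonal of $D=vv^\top$ exactly, leaving $\operatorname{diag}(\alpha_1^2,\dots,\alpha_n^2)>0$. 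The one point worth highlighting is your observation that rank-one $D$ together with \eqref{S2} forces \emph{all} eigenvalues of $A$ to be simple, by the dimension count $\dim E+\dim\ker D\ge 2+(n-1)>n$. This is a genuine structural simplification specific to the rank-one setting: the general Kawashima compensator construction (as in \cite{KaSh1,HuZ1}) must work with spectral projections onto possibly multi-dimensional eigenspaces and define $K$ block-by-block via $P_jDP_k/(a_k-a_j)$; your version collapses to the scalar case because every eigenspace is a line. The price is that your argument does not generalize beyond rank-one dissipation, but that is all that is needed here, and it makes the well-definedness of the divided differences and the positivity of the remaining diagonal completely transparent.
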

\begin{proof}
See, e.g., \cite{HuZ1}.
\end{proof}
\end{remark}

As usual, we can reduce the problem to the analysis of a stationary profile with $s=0$, by introducing a convenient change
of variable and relabeling the flux function $f$ accordingly.
Therefore, we end up with a stationary solution $(U,Q)(x)$ of the
system
\begin{equation}
\label{eq:profileeqn}
\begin{aligned}
f(U)_x + LQ_x &= 0,\\
-Q_{xx} + Q + g(U)_x &= 0.
\end{aligned}
\end{equation}

After such normalizations and under \eqref{S0} - \eqref{S2}, we make the following assumptions about the shock:

\begin{equation}
\label{H0}
f(u_+) = f(u_-), \quad  \text{(Rankine-Hugoniot jump conditions)},
\tag{H0}
\end{equation}

\begin{equation}
\label{H1}
\tag{H1}
\begin{aligned}
a_p(u_+) < \; &0 < a_{p+1}(u_+), \\
a_{p-1}(u_-) <\; &0 < a_p (u_-),
\end{aligned}
\quad \;\,\text{(Lax entropy conditions)},
\end{equation}

\begin{equation}
\label{H2}
\tag{H2}
(\nabla a_p)^\top r_p \neq 0, \; \; \text{for all }\, u \in \cU, \quad \,\text{(genuine nonlinearity)},
\end{equation}

\begin{equation}
\label{H3}
\tag{H3}
l_p(u_\pm) L B(u_\pm) r_p(u_\pm) > 0, \quad\qquad \;\;\;\;\,\text{(positive diffusion)}.
\end{equation}

\begin{remark}
Systems of form \eqref{eq:systemold} arise in the study of radiative hydrodynamics, for which the paradigmatic system has the form
\begin{equation}
\label{rEuler}
\begin{aligned}
 \rho_t + (\rho u)_x &= 0,\\
(\rho u)_t + (\rho u^2 + p)_x &= 0,\\
\Big(\rho(e + \half u^2)\Big)_t + \Big(\rho u( e + \half u^2) +p u + q \Big)_x &= 0, \\
-q_{xx} + aq + b(\theta^4)_x &= 0,
\end{aligned}
\end{equation}
which corresponds to the one dimensional Euler system coupled with an elliptic equation describing radiations in a stationary diffusion regime.
In \eqref{rEuler}, $u$ is the velocity of the fluid, $\rho$ is the mass density and $\theta$ denotes the temperature. Likewise, $p = p(\rho,\theta)$ is the pressure and $e = e(\rho,\theta)$ is the internal energy. Both $p$ and $e$ are assumed to be smooth functions of $\rho > 0$, $\theta > 0$ satisfying
\[
p_\rho > 0, \quad p_\theta \neq 0, \quad e_\theta > 0.
\]
Finally, $q = \rho \chi_x$ is the radiative heat flux, where $\chi$ represents the radiative energy, and $a, b > 0$ are positive constants related to absorption. System \eqref{rEuler} can be (formally) derived from a more complete system involving a kinetic equation for the specific intensity of radiation. For this derivation and further physical considerations on \eqref{rEuler} the reader is referred to \cite{VK,LCG1,KaNN2}.
%TODO: put back if added...
%(See Section \ref{sec:Euler} below.)
\end{remark}

The existence and regularity of traveling wave type solutions of
(\ref{eq:systemold}) under hypotheses \eqref{S0} - \eqref{S2}, \eqref{H0} - \eqref{H3} is known%CHANGED
% \cite{LMS1,LMS2}
, even in the more general case of non-convex velocity fluxes (assumption \eqref{H2} does not hold). For details of existence, as well as further properties of the profiles such as monotonicity and regularity under small-amplitude assumption (features which will be used throughout the analysis), the reader is referred to \cite{LMS1,LMS2}.
%ENDCHANGED

\subsection{Main results} In the spirit
of \cite{ZH,MaZ1,MaZ4,MaZ5}, we first consider the
linearized equations of \eqref{eq:systemold} about the profile
$(U,Q)$:
\begin{equation}
    \begin{aligned}%CHANGED \left\{
%ENDOFCHANGED
\begin{matrix}
    u_{t}+ (
%CHANGED
%df
A
%ENDOFCHANGED
(U)u)_{x} +L q_{x}=0, \\
    - q_{xx} +q +(
%CHANGED
%dG
B
%ENDOFCHANGED
(U)u)_{x} =0,\end{matrix}%CHANGED \right.
%ENDOFCHANGED
    \end{aligned}
     \label{eq:lin}
\end{equation}
with initial data $u(0)=u_0$. Hence, the Laplace transform applied to system \eqref{eq:lin} gives
%CHANGED
%\begin{equation}
%    \begin{aligned}
%    \lambda u+ (df(U)\,u)' +L q'&= S, \\
%    - q'' +q +(dG(U)u)' &=0,
%    \end{aligned}
%     \label{eq:linLaplace}
%\end{equation}
\begin{equation}
    \begin{aligned}
    \lambda u+ (A(U)\,u)_x +L q_x&= S, \\
    - q_{xx} +q +(B(U)u)_x &=0,
    \end{aligned}
     \label{eq:linLaplace}
\end{equation}
%ENDOFCHANGED
where source $S$ is the initial data $u_0$.
As it is customary in related nonlinear wave stability analyses
(see, e.g., \cite{AGJ,San,ZH,Z3}), we start by studying the underlying spectral
problem, namely, the homogeneous version of system \eqref{eq:linLaplace}:
\begin{equation}
\label{spectralsyst}
    \begin{aligned}
    \lambda u+ (A(U)\,u)_x +L q_x&= 0, \\
    - q_{xx} +q +(B(U)u)_x &=0.
    \end{aligned}
\end{equation}

An evident necessary condition for orbital stability is the absence of $L^2$ solutions
to \eqref{spectralsyst} for values of $\lambda$ in $\{ \Real \lambda \geq 0\} \backslash \{0\}$,
being $\lambda = 0$ the eigenvalue associated to translation invariance.
This spectral stability condition can be expressed in terms of the \textit{Evans function},
an analytic function playing a role for differential operators analogous to that played by the
characteristic polynomial for finite-dimensional operators (see \cite{AGJ,San,GZ,ZH,MaZ1}
and the references therein). The main property of the Evans function is that, on the resolvent set
of a certain operator ${\mathcal L}$, its zeroes coincide in both location and multiplicity with
the eigenvalues of ${\mathcal L}$. Thence, we express the spectral stability condition as follows:

\begin{equation}
\label{SS}
\tag{SS}
\begin{minipage}[c]{4.5in}
There exists no zero of the Evans function $D$ on $\{\Re \lambda \ge 0\}\setminus \{0\}$;
equivalently, there exist no nonzero eigenvalues of $\mathcal{L}$ with $\Re \lambda\ge 0$.
\end{minipage}
\end{equation}

\medskip

\noindent Like in previous analyses \cite{ZH,Z3,Z7}, we define the following {\it stability condition} (or {\it Evans function condition}) as follows:
%
%(D) $\hbox{\rm There exist no zeroes of $D_\pm(\cdot)$ in the
%nonstable half-plane $ \{\Re \lambda \ge 0\} \setminus \{0\}$}.
%$\medskip
%
%
%
%

\begin{equation}
\label{D}
\tag{D}
\begin{minipage}[c]{4.5in}
There exists precisely one zero (necessarily at $\lambda=0$; see Lemmas \ref{lem-Evansfns} - \ref{lemma-mD})
of the Evans function on the nonstable half plane $\{\Re \lambda \ge 0\}$,
\end{minipage}
\end{equation}

\smallskip

\noindent which implies the spectral stability condition \eqref{SS} plus the condition that $D$ vanishes at $\lambda = 0$ at order one. Notice that just like in the scalar case \cite{LMNPZ1}, due to the degenerate nature of system \eqref{spectralsyst}
(observe that $A(U)$ vanishes at $x=0$) the number of decaying modes at $\pm \infty$, spanning
possible eigenfunctions, depends on the region of space around the singularity. Therefore the definition of $D$ is given in terms of the Evans functions $D_\pm$ in regions $x\gtrless 0$, with same regularity and spectral properties (see its definition in \eqref{rEvans} and Lemmas \ref{lem-Evansfns} - \ref{lemma-mD} below).

%\begin{remark}\textup{ We note that the small-amplitude case certainly
%satisfies Assumption \eqref{assumption} since $Lg>0$ and $|a_x|$ is
%sufficiently small in this case.}
%\end{remark}

%
%
%
%Setting $(\tilde u,\tilde q)=(U,Q)+(u,q)$, the nonlinear system for
%the perturbation reads
% \begin{equation}\label{pert1}
%\begin{aligned}
%    u_{t}+ (f(U+u)-f(U))_{x} +L q_{x}&=0,\\
%     - q_{xx} +  q +(G(U+u)-G(U))_{x} &=0. &
%\end{aligned}
%\end{equation}
%Linearizing last system aroud the profile $(U,Q)$ we arrive at a
%linear system for the perturbation $(u,q)$, namely,
% \begin{equation}
%    \begin{aligned}
%    u_{t}+ (df(U)\,u)_{x} +L q_{x}&=0, \\
%    - q_{xx} +q +(dG(U))u)_{x} &=0,
%    \end{aligned}
%     \label{eq:lin}
%\end{equation}
%and its asymptotics for $x\to \pm\infty$ is given by
%  \begin{equation*}
%    \begin{aligned}
%    u_{t}+ df(U_\pm)\,u_{x} +L q_{x}&=0,\\
%    - q_{xx} + q +(dG(U) u)_{x} &=0.
%    \end{aligned}
%\end{equation*}
%Hence, the Laplace transform applied to system \eqref{eq:lin} gives
%\begin{equation}
%    \begin{aligned}
%    \lambda u+ (df(U)\,u)' +L q'&= S, \\
%    - q'' +q +(dG(U)u)' &=0,
%    \end{aligned}
%     \label{eq:linLaplace}
%\end{equation}
%where source $S$ is just the initial data $u_0$ for $u$, and, taking
%in mind its asymptotics for $x\to \pm\infty$, we already know (cfr.\
%previous calculations\ldots) \hole{TO DO: Add those calculations.}
%that it has one decay mode at $+\infty$ and one decay mode at
%$-\infty$. Hence, let us analyze \eqref{eq:lin} in details and
%construct its Green function, starting from the above decay modes.

\medskip

%ENDCHANGED

Our main result is then as follows.

\begin{theorem}\label{theo-main} Assuming
%CHANGED: (Ramon, please check that this is what you had in mind...
%%Yes... (H3) is weaker than \eqref{eq:mainassump} because holds for end states %%only. \eqref{eq:mainassump} has to be there as well, and not only (H3). - RP
\eqref{eq:mainassump}, \eqref{S0}--\eqref{S2}, \eqref{H0}--\eqref{H3},
%ENDCHANGED:
and the spectral stability
condition
%CHANGED
%(D),
\eqref{D},
%ENDCHANGED
then the Lax radiative shock profile $(U,Q)$ with sufficiently small amplitude is
asymptotically orbitally stable. More precisely, the solution
$(\tilde u,\tilde q)$ of \eqref{eq:systemold} with initial data
$\tilde u_0$ satisfies
\begin{equation}\begin{aligned}
&|\tilde u(x,t) - U(x-\alpha(t))|_{L^p} \le C(1+t)^{-\frac
12(1-1/p)}|u_0|_{L^1\cap H^4}\\&|\tilde u(x,t) -
U(x-\alpha(t))|_{H^4} \le C(1+t)^{-1/4}|u_0|_{L^1\cap H^4}
\end{aligned}\end{equation}
and
\begin{equation}\begin{aligned}
&|\tilde q(x,t) - Q(x-\alpha(t))|_{W^{1,p}} \le C(1+t)^{-\frac
12(1-1/p)}|u_0|_{L^1\cap H^4}\\&|\tilde q(x,t) -
Q(x-\alpha(t))|_{H^5} \le C(1+t)^{-1/4}|u_0|_{L^1\cap H^4}
\end{aligned}\end{equation}
for initial perturbation $u_0:=\tilde u_0 - U$ that are sufficiently
small in $L^1\cap H^4$, for all $p\ge 2$, for some $\alpha(t)$
satisfying $\alpha(0)=0$ and
\begin{equation}\begin{aligned}
&|\alpha(t)|\le C|u_0|_{L^1\cap H^4}
\\&|\dot\alpha(t)|\le C(1+t)^{-1/2}|u_0|_{L^1\cap H^4}.
\end{aligned}\end{equation}

\end{theorem}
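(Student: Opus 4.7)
The plan is to follow the now-standard Zumbrun--Howard pointwise Green function framework, adapted to the degenerate hyperbolic--elliptic setting at hand and combined with a Kawashima-type nonlinear damping estimate in $H^4$. The argument splits naturally into four stages: (i) reduction to a shifted perturbation problem; (ii) construction of the resolvent kernel on either side of the characteristic point $x=0$, extraction of the neutral-mode residue at $\lambda=0$, and pointwise Green function bounds; (iii) linear $L^p$ decay plus a high-frequency damping estimate; (iv) a Duhamel iteration in which $\alpha(t)$ is chosen so as to annihilate the neutral direction.

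First, I would set $v(x,t):=\tilde u(x+\alpha(t),t)-U(x)$ and $r(x,t):=\tilde q(x+\alpha(t),t)-Q(x)$ and insert into \eqref{eq:systemold}; using the profile equations \eqref{eq:profileeqn} yields a linearized equation for $v$ with the forcing $\dot\alpha(t)(U'+v_x)$ plus a conservative nonlinearity $\partial_x\mathcal{N}(v,r)$, coupled with an elliptic equation for $r$. Taking the Laplace transform produces \eqref{eq:linLaplace}, and the resolvent kernel $G_\lambda(x,y)$ is then built from bases of decaying modes for the associated eigenvalue ODE at $\pm\infty$, glued across the singularity at $x=0$ where $a_p(U)$ vanishes; the two-sided Evans functions $D_\pm$ of Lemmas~\ref{lem-Evansfns}--\ref{lemma-mD} supply the correct normalizations on each half-line. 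Hypothesis \eqref{D} then guarantees that $G_\lambda$ is meromorphic on $\{\Re\lambda\geq -\eta\}$ for some $\eta>0$, with a unique simple pole at $\lambda=0$ whose residue is the rank-one projector onto the translational mode $U'$. Inverting the Laplace transform along a suitably deformed contour and separating off this residue yields the decomposition $G(x,t;y)=E(x,y,t)+\widetilde G(x,t;y)$ with $\widetilde G$ enjoying Gaussian-type bounds in the convected variables $x-y-a_p(u_\pm)t$, plus the standard scattering tails.

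These pointwise bounds translate into $L^1\!\to\! L^p$ linear estimates $\|\widetilde G(t)\ast f\|_{L^p}\lesssim (1+t)^{-\frac12(1-1/p)}\|f\|_{L^1}$. Because the system is only partially dissipative --- dissipation acts solely on the $p$-th field through \eqref{eq:mainassump} --- these estimates do not by themselves control Sobolev norms beyond $L^2$. I would therefore complement them with a Kawashima-type damping estimate, built from the skew-symmetric compensator $K$ of Lemma~\ref{lemmaKaw}, to produce an $H^4$-equivalent functional $\cE(t)$ satisfying $\tfrac{d}{dt}\cE(t)+\theta\,\cE(t)\leq C\|v(t)\|_{L^2}^2+(\text{cubic})$. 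Once the $L^2$ norm is known to decay at rate $(1+t)^{-1/4}$ from the linear part, Gronwall transfers this rate to $\cE(t)$, hence to $\|v\|_{H^4}$ and, via the elliptic equation for $r$, to $\|r\|_{H^5}$.

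The scheme is then closed by the Duhamel representation $v(t)=S(t)v_0+\int_0^t S(t-\tau)\bigl[\dot\alpha(U'+v_x)+\partial_x\mathcal{N}\bigr](\tau)\,d\tau$; projecting onto the $E$-part implicitly \emph{defines} $\alpha(t)$ as the solution of a fixed-point problem that cancels the secular growth, and a continuous-induction argument on the ansatz $\|v(t)\|_{L^p}\leq C\zeta(t)(1+t)^{-(1-1/p)/2}$, $|\dot\alpha(t)|\leq C\zeta(t)(1+t)^{-1/2}$ closes for $|u_0|_{L^1\cap H^4}$ sufficiently small. The main technical obstacle is stage~(ii): the Lax condition \eqref{H1} forces $a_p(U(0))=0$, so $A(U)$ is singular at $x=0$ and the number of decaying modes of \eqref{spectralsyst} at $\pm\infty$ does not match the index one would have at a regular spatial point; tracking separate Evans functions $D_\pm$ on each half-line and extracting pointwise bounds that are uniform near both the degenerate spectral point $\lambda=0$ and the degenerate spatial point $x=0$ is precisely what distinguishes this analysis from the classical strictly parabolic viscous shock theory and constitutes the heart of the proof.
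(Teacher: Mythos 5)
Your proposal follows essentially the same route as the paper: perturbation/shift ansatz, resolvent kernel built from two-sided decaying bases with the two Evans functions $D_\pm$ handling the degenerate point $x=0$, decomposition of the Green function into an excited translational piece $E$ plus a Gaussian-bounded remainder, Kawashima-type $H^4$ damping via the compensator $K$ for the high-frequency part, and a Duhamel fixed point defining $\alpha(t)$ closed by continuous induction on the norm $\zeta(t)$. The only detail you gloss over is that the paper's low-frequency decomposition carries an extra residual term $R(x,t;y)$ encoding the algebraic singularity $a_p(y)^{-1}(x/y)^\alpha$ near $x=0$ (Lemma~\ref{lem-estR}), and the damping estimate also picks up a $|\dot\alpha|^2$ source, but both are minor and fully compatible with your scheme.
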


\begin{remark} \textup{The time-decay rate of $q$ is not optimal. In fact, it can be improved
as we observe that $|q(t)|_{L^2} \le C|u_x(t)|_{L^2}$ and $|u_x(t)|_{L^2}$ is expected to decay
like $t^{-1/2}$; however, we omit the detail of carrying this out.
%CHANGED
Likewise, assuming in addition
a small $L^1$ first moment on the initial perturbation,
we could obtain by the approach of \cite{Ra} the sharpened
bounds $|\dot \alpha|\le C(1+t)^{\sigma-1}$,
and $| \alpha -\alpha(+\infty)|\le C(1+t)^{\sigma-1/2}$,
for $\sigma>0$ arbitrary, including in particular the information
that $\alpha$ converges to a specific limit (phase-asymptotic
orbital stability); however, we omit this again in favor of simplicity.}
%ENDCHANGED}
\end{remark}

%CHANGED (todone):
We shall prove the following result in the appendix, verifying Evans
condition (D).
%We shall prove the following result in the appendix, verifying the spectral stability condition (D).
%(TODO: ref. to the cond, (D)?.).}
%ENDCHANGED:

\begin{theorem}
\label{spectralstability}
For $\epsilon := |u_+ - u_-|$ sufficiently small, radiative shock profiles are spectrally stable.
\end{theorem}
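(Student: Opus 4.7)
\medskip

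\noindent\textbf{Proof proposal for Theorem \ref{spectralstability}.}

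The plan is to carry out a small-amplitude Evans function reduction in the spirit of \cite{MaZ5,HuZ1,PZ}, adapting the scalar analysis of \cite{LMNPZ1}. Let $\epsilon := |u_+ - u_-|$ parametrize the amplitude. First, I would handle the high- and mid-frequency regime: using the symmetrizer $A_0$ from \eqref{S1} together with the Kawashima--Shizuta skew-symmetric compensator $K$ furnished by Lemma \ref{lemmaKaw}, one sets up a modified energy functional of the form $\langle A_0 u, u\rangle + \kappa \,\Re\langle i K u, u\rangle$ (balanced against a weighted $q$-norm obtained by multiplying the elliptic equation by $q$ and $q_{xx}$) to obtain an a priori bound ruling out non-trivial $L^2$ eigenfunctions for $\Re\lambda \geq -\theta$ and $|\lambda|\geq R$ for some absolute constants $\theta, R>0$ independent of $\epsilon$. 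This localizes the spectral question to a compact set $\Omega \subset \{\Re \lambda \geq -\theta, |\lambda|\leq R\}$.

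Next, on $\Omega$, I would perform the standard rescaling $\tilde x = \epsilon x$, $\tilde \lambda = \lambda/\epsilon$, which makes the profile $O(1)$ in both amplitude and width in rescaled coordinates, and brings the leading-order eigenvalue problem into the principal characteristic block. Using the spectral projector $P(U) = r_p l_p$ associated to the $p$-th field and its complement, together with the non-degeneracy \eqref{H3} and the genuine coupling \eqref{S2}, one performs a Lyapunov--Schmidt / Chapman--Enskog reduction: the transverse components of $u$ and the elliptic variable $q$ are solved, to leading order in $\epsilon$, as slaved functions of the scalar principal mode $v := l_p\, u$. Substituting back into the principal equation yields, to leading order, exactly the Hamer eigenvalue problem
\begin{equation*}
\tilde \lambda \, v + \bigl( a_p(U_0) v\bigr)_{\tilde x} + \bigl(l_p L B r_p\bigr)\bigl( K_0 * v_{\tilde x}\bigr)_{\tilde x} = 0,
\end{equation*}
where $U_0$ is the leading-order Burgers-type profile and $K_0$ is the elliptic kernel $\tfrac{1}{2}e^{-|\tilde x|}$, identical up to normalization to the scalar model analyzed in \cite{LMNPZ1}.

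With this reduction in hand, the next step is to compare the Evans functions. Tracking the decaying/growing modes of the full system through the rescaling and matching them across the degeneracy point $x=0$ where $a_p(U(0))=0$ (this is precisely where the analysis is delicate, since the dimension of the stable/unstable subspaces jumps), one shows $D_\pm(\lambda;\epsilon) = \gamma_\pm(\epsilon)\, \widetilde D_\pm^{scal}(\tilde\lambda) + O(\epsilon)$ on $\Omega$, with $\gamma_\pm(\epsilon)$ nonvanishing scalar factors coming from the transverse (non-principal) directions. The concurrent scalar result of \cite{LMNPZ1} asserts that $\widetilde D^{scal}$ has precisely one simple zero on $\{\Re \tilde \lambda \geq 0\}$, located at $\tilde\lambda = 0$. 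A Rouch\'e argument on a small contour around $\lambda=0$ (where the translational zero forces the factor structure) combined with the high-frequency bound then forces $D(\cdot;\epsilon)$ to have a single simple zero at $\lambda=0$ on $\{\Re\lambda\geq 0\}$ for all $\epsilon$ sufficiently small, establishing \eqref{D}.

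The principal obstacle I anticipate is the careful construction and matching of the Evans functions $D_\pm$ across the sonic point $x_\ast$ where $a_p(U(x_\ast))=0$: the eigenvalue ODE becomes genuinely singular there, the number of decaying modes from $\pm\infty$ differs from the generic count, and the leading-order reduction must be shown to be uniform in a neighborhood of the degeneracy. This requires a separate local analysis at $x_\ast$ (conjugation to a regular-singular normal form, or an asymptotic matching) to justify that the nonvanishing prefactors $\gamma_\pm(\epsilon)$ are indeed uniformly bounded away from zero and that the remainder $O(\epsilon)$ holds uniformly on $\Omega$. The other standard technicalities---uniform construction of the profile $U_0$, smooth dependence of eigenprojectors on $U$, and the energy estimate closing at the elliptic variable---should proceed along established lines.
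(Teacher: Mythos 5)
Your proposal takes a genuinely different route from the paper. The paper's proof of Theorem~\ref{spectralstability} is a self-contained \emph{energy-estimate} argument carried out entirely in integrated coordinates (using the zero-mass ansatz $\int u = \int q = 0$). After the Friedrichs estimate with the symmetrizer $A_0$ and the auxiliary scalar $\beta$ satisfying $(A_0L)^\top = \beta B$, the paper derives the a priori bounds $0\le\Re\lambda\le C\epsilon^2$, $|\Im\lambda|\le C\epsilon$, then a Kawashima-compensator estimate controlling $|u_x|_{L^2}^2$ by $\int|U_x||u|^2$, and finally a Goodman-type weighted estimate (diagonalizing $A$, introducing the weight matrix $S$ with weights $\phi_\pm$ satisfying $\phi'_\pm = \mp c_*|U_x|\phi_\pm$) that controls $\int|U_x||u|^2$ by $\epsilon|u_x|_{L^2}^2$. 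Adding the Kawashima and Goodman bounds with suitable coefficients forces $\Re\lambda < 0$ directly; no rescaling, no Lyapunov--Schmidt reduction to the scalar model, no Evans function comparison, and no Rouch\'e argument appear anywhere in the proof. You instead propose to localize the spectrum by Kawashima energy, rescale, Chapman--Enskog-reduce the principal mode to the scalar Hamer eigenvalue problem, and invoke the scalar stability result of \cite{LMNPZ1} via a Rouch\'e comparison of Evans functions. Your strategy is a recognized one (cf.\ \cite{PZ,MaZ5}) and could plausibly be made to work, but it is strictly heavier than the paper's: it requires the scalar result as input, demands a uniform reduction estimate valid \emph{through} the sonic point $x_*$ where $a_p$ vanishes and the decaying-mode counts jump (you correctly flag this as the crux), and needs uniform comparability of the two Evans functions on a contour --- all nontrivial given the nonstandard, two-sided definition of $D_\pm$ forced by the degeneracy. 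The trade-off is that your route, if carried out, would directly locate and count the zero of the Evans function, whereas the paper's energy argument only rules out unstable eigenvalues and leaves the Evans function bookkeeping (Lemmas \ref{lem-Evansfns}--\ref{lemma-mD}) as a separate step. One concrete caveat: your rescaling $\tilde\lambda = \lambda/\epsilon$ looks off for the diffusive balance; the effective long-wave behavior is Burgers-type with an $O(1)$ diffusion constant $l_pLBr_p$, and the profile has width $O(1/\epsilon)$, which would suggest $\tilde\lambda=\lambda/\epsilon^2$ for the real part; the paper's bounds $\Re\lambda\le C\epsilon^2$, $|\Im\lambda|\le C\epsilon$ indicate the spectral region is in fact anisotropic in $\epsilon$, so the correct single rescaling is not $\lambda/\epsilon$.
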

\begin{corollary}\label{prop-verifyD} The condition (D) is satisfied for small amplitudes.
\end{corollary}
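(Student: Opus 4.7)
The plan is to deduce condition \eqref{D} as an immediate consequence of Theorem~\ref{spectralstability} together with a local analysis of the Evans function at the origin. Recall that \eqref{D} requires two assertions: (i) $D$ has no zeros on $\{\Re\lambda\ge 0\}\setminus\{0\}$, and (ii) $D$ vanishes at $\lambda=0$ to precisely first order. Assertion (i) is literally the content of Theorem~\ref{spectralstability} applied to the small-amplitude regime $\epsilon = |u_+-u_-| \ll 1$, so nothing remains to be shown there; the whole content of the corollary is to upgrade spectral stability to \eqref{D} by analyzing the order of vanishing at the origin.

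For the vanishing $D(0)=0$, I would invoke translation invariance of the profile equation \eqref{eq:profileeqn}: differentiating \eqref{eq:profileeqn} in $x$ shows that $(U_x,Q_x)$ solves the homogeneous spectral system \eqref{spectralsyst} with $\lambda=0$. Since $(U,Q)$ connects the constant states $(u_\pm,0)$ exponentially (by the small-amplitude existence/regularity results of \cite{LMS1,LMS2}), $(U_x,Q_x)$ decays to $0$ at $\pm\infty$ and so selects a nontrivial element common to the stable subspace of the spectral ODE at $+\infty$ and the unstable subspace at $-\infty$, forcing $D_\pm(0)=0$ (in both subregions $x\gtrless 0$ required because of the degeneracy of $A(U)$ at the sonic point).

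For the simplicity of this zero, I would follow the standard Zumbrun--Howard/Gardner--Zumbrun computation: expand $D(\lambda)$ to first order in $\lambda$ near $0$ and identify $D'(0)$ as a Melnikov-type determinant factoring into (a) a nonzero transversality constant arising from the profile construction of \cite{LMS1,LMS2} and (b) the Lopatinski--Liu--Majda stability determinant of the underlying inviscid shock \eqref{hscl}, \eqref{RH}. Under the Lax condition \eqref{H1} for a classical Lax $p$-shock and genuine nonlinearity \eqref{H2}, the Lopatinski determinant is well known to be nonzero, so $D'(0)\ne 0$. Combined with assertion (i), this gives that $\lambda=0$ is the unique zero of $D$ on $\{\Re\lambda\ge 0\}$ and it is simple, i.e.\ \eqref{D} holds.

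The main obstacle in carrying this out rigorously is not the Lax computation itself, which is standard, but rather dealing with the degenerate structure of \eqref{spectralsyst}: because $A(U)$ vanishes at $x=0$, the dimensions of the decaying subspaces jump across the sonic point, so the Evans function must be defined piecewise via $D_\pm$ on $x\gtrless 0$ and matched appropriately. The precise existence, analyticity, and order-one vanishing of $D_\pm$ at $\lambda=0$ in this degenerate setting are exactly what Lemmas~\ref{lem-Evansfns}--\ref{lemma-mD} have already established; I would therefore invoke those lemmas directly to handle the matching, reducing the corollary to the two clean inputs identified above.
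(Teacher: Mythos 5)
Your proposal takes essentially the same route as the paper, which cites Lemmas~\ref{lem-Evansfns}--\ref{lemma-mD} for the order-one vanishing of $D$ at $\lambda=0$ and combines it with Theorem~\ref{spectralstability} for the absence of nonzero eigenvalues in $\{\Re\lambda\ge 0\}$. Your account simply unpacks what those two ingredients do: you correctly observe that translation invariance (the choice $\phi_1^+(\cdot,0)=\phi_{n+2}^-(\cdot,0)=\bar W_x$ in the proof of Lemma~\ref{lem-Evansfns}) forces $D_\pm(0)=0$, and that the coefficient of $\lambda$ in the expansion \eqref{Evansfns1} factors through the Lopatinski--Liu--Majda determinant $\Delta$ (and the Wronskian-type factor $\gamma_\pm$), whose nonvanishing for a weak Lax $p$-shock gives simplicity of the zero; you also correctly identify that the matching of the two piecewise Evans functions $D_\pm$ across the sonic degeneracy is precisely what Lemma~\ref{lemma-mD} supplies via the nonzero factor $m$.
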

\begin{proof}
 In Lemmas \ref{lem-Evansfns} - \ref{lemma-mD} below, we show that $D(\lambda)$ has a single zero at $\lambda=0$.
Together with Theorem \ref{spectralstability}, this gives the result.
\end{proof}

%CHANGED added discussion of previous results:
\subsubsection{Discussion}
Prior to \cite{LMNPZ1},
asymptotic stability  of radiative shock profiles has been
studied in the scalar case in \cite{KN1}
for the particular case of Burgers velocity flux
and for linear
%CHANGED
%$M = \tilde M u$.
$g(u) = M u$, with constant $M$.
%ENDOFCHANGED
Another
scalar result is the partial analysis of
Serre \cite{Ser7} for the exact  Rosenau model.
In the case of systems, we mention the stability
result of \cite{LCG2} for the full  Euler radiating system under
special {\it zero-mass} perturbations, based on
an adaptation of the classical  energy method of Goodman-Matsumura-Nishihara \cite{Go1,MN}.
Here, we recover for systems, under
general (not necessarily zero-mass) perturbations,
the sharp rates of decay established in
\cite{KN1} for the scalar case.

We mention that works \cite{KN1,LMNPZ1} in the scalar case
concerned also {\it large-amplitude} shock profiles
(under the Evans condition (D), automatically satisfied
in the Burgers case \cite{KN1}).
At the expense of further effort book-keeping-- specifically in
the resolution of flow near the singular point and construction
of the resolvent-- we could obtain by our methods a large-amplitude
result similar to that of \cite{LMNPZ1}.
However, we greatly simplify the exposition by the small-amplitude
assumption allowing us to approximately diagonalize {\it before}
carrying out these steps.
As the existence theory is only for small-amplitude shocks, with
upper bounds on the amplitudes for which existence holds, known
to occur, and since the domain of our hypotheses in \cite{LMNPZ1}
does not cover the whole domain of existence in the scalar
case (in contrast to \cite{KN1}, which does address the entire domain
of existence), we have chosen here for clarity to restrict to
the small-amplitude setting.
It would be interesting to carry out a large-amplitude analysis valid
on the whole domain of existence in the system case.
%ENDCHANGED

\subsection{Abstract framework}\label{subsec:framework}
Before beginning the analysis, we orient ourselves with a few simple
observations framing the problem in a more standard way. Consider
now the inhomogeneous version
%CHANGED
%\begin{equation}
%    \begin{aligned}
%    u_{t}+ (df(U)\,u)_{x} +L q_{x}&=g,\\
%    - q_{xx} +q +(dG(U)\, u)_{x} &=h,
%    \end{aligned}
%     \label{eq:fulllin}
%\end{equation}
\begin{equation}
    \begin{aligned}
    u_{t}+ (A(U)\,u)_{x} +L q_{x}&=g,\\
    - q_{xx} +q +(B(U)\, u)_{x} &=h,
    \end{aligned}
     \label{eq:fulllin}
\end{equation}
%ENDCHANGED
of \eqref{eq:lin}, with initial data $u(x,0)=u_0$. Defining the
compact operator $\cK:=(-\partial_x^2+ 1)^{-1}$ of order $-1$, and
the bounded operator
%CHANGED
%$$
%\cJ:=  \partial_x L \cK \partial_x dG(U)
%$$
$$
\cJ:=  \partial_x L \cK \partial_x B(U)
$$
%ENDCHANGED
of order $0$, we may rewrite this as a nonlocal equation
%CHANGED
%\begin{equation}
%\begin{aligned}
%    u_{t}+ (df(U)\,u)_{x} + \cJ u&=
%\partial_x L \cK h  + g,\\
%u(x,0)&=u_0(x)
%\end{aligned}
%     \label{eq:redlin}
%\end{equation}
\begin{equation}
\begin{aligned}
    u_{t}+ (A(U)\,u)_{x} + \cJ u&=
\partial_x L \cK h  + g,\\
u(x,0)&=u_0(x)
\end{aligned}
     \label{eq:redlin}
\end{equation}
%ENDCHANGED
in $u$ alone, recovering $q$ by
%CHANGED
%\begin{equation}
%    q=-\cK\partial_x dG(U) u +\cK h.
%     \label{eq:qrec}
%\end{equation}
\begin{equation}
    q=-\cK\partial_x B(U) u +\cK h.
     \label{eq:qrec}
\end{equation}
%ENDCHANGED
The generator
%CHANGED
%$\cL:= - (df(U)\,u)_{x} - \cJ u$
$\cL:= - (A(U)\,u)_{x} - \cJ u$
%ENDCHANGED
of \eqref{eq:redlin}
is a zero-order perturbation of the generator
%CHANGED
%$- df(U)u_x$
$- A(U)u_x$
%ENDCHANGED
of a
hyperbolic equation, so generates a $C^0$ semigroup $e^{\cL t}$ and
an associated Green distribution $G(x,t;y):=e^{\cL t}\delta_y(x)$.
Moreover, $e^{\cL t}$ and $G$ may be expressed through the inverse
Laplace transform formulae
\begin{equation}\label{iLT}
\begin{aligned}
e^{\cL t}&=\frac{1}{2\pi i} \int_{\eta -i\infty}^{\eta+i\infty}
e^{\lambda t} (\lambda-\cL)^{-1}d\lambda,\\
G(x,t;y)&=\frac{1}{2\pi i} \int_{\eta -i\infty}^{\eta+i\infty}
e^{\lambda t} G_\lambda(x,y)d\lambda,\\
\end{aligned}
\end{equation}
for all $\eta\ge \eta_0$, where
$G_\lambda(x,y):=(\lambda-\cL)^{-1}\delta_y(x)$ is the resolvent
kernel of $\cL$.

Collecting information, we may write the solution of
\eqref{eq:fulllin} using Duhamel's principle/variation of constants
as
\begin{equation}\label{eq:drep}
\begin{aligned}
u(x,t)&= \int_{-\infty}^{+\infty} G(x,t;y)u_0(y)dy \\
&\quad + \int_0^t \int_{-\infty}^{+\infty} G(x,t-s;y)
(\partial_x L \cK h  + g)(y,s)\, dy\, ds,\\
%CHANGED
%q(x,t)& = \Big((-\cK \partial_x dG(U))u +\cK h \Big) (x,t),
q(x,t)& = \Big((-\cK \partial_x B(U))u +\cK h \Big) (x,t),
%ENDCHANGED
\end{aligned}
\end{equation}
where $G$ is determined through \eqref{iLT}.

%%%%%%%%%%%%%%%%
%%%%%%%%%%%%%%% Rmeark:
%%%%%%%%%%%%%5
%%%%%%%%%%%% I could do up to here....
%%%%%%%%%%%% TODO: replace all the matrices a by A for consistency with the %%%%%%%%%55% assumptions.
%%%%%%%%%%%% RP

That is, the solution of the linearized problem reduces to finding
the Green kernel for the $u$-equation alone, which in turn amounts
to solving the resolvent equation for $\cL$ with delta-function
data, or, equivalently, solving the differential equation
\eqref{eq:linLaplace} with source $S=\delta_y(x)$. This we shall do
in standard fashion by writing \eqref{eq:linLaplace} as a
first-order system and solving appropriate jump conditions at $y$
obtained by the requirement that $G_\lambda$ be a distributional
solution of the resolvent equations.

This procedure is greatly complicated by the circumstance that the
resulting
%CHANGED
%$3\times 3$
$(n+2)\times(n+2)$
%ENDCHANGED
 first-order system
%CHANGED
%\begin{equation}\label{eq:firsto}
%\Theta(x,\lambda)W_x=A(x,\lambda)W
%\end{equation}
\begin{equation}\label{eq:firsto}
\Theta(x,\lambda)W_x=\A(x,\lambda)W
\end{equation}
%ENDCHANGED
is  {\it singular} at the special point where
%CHANGED
%$df(U)$
$A(U)$
%ENDCHANGED
vanishes, with
$\Theta$ dropping to rank $n+1$. However, in the end we find as usual
that $G_\lambda$ is uniquely determined by these criteria, not only
for the values $\Re \lambda \ge \eta_0>0$ guaranteed by
$C^0$-semigroup theory/energy estimates, but, as in the usual
nonsingular case \cite{He}, on the {\it set of consistent splitting}
for the first-order system \eqref{eq:firsto}, which includes all of
$\{\Re \lambda \ge  0\}\setminus \{0\}$. This has the implication
that the essential spectrum of $\cL$ is confined to $\{\Re
\lambda<0\}\cup \{0\}$.

%\hole{NOTE: I am saying only things that are true for systems too.
%Of course, much more is true in scalar case-KZ}

\begin{remark}\label{jump}\textup{
The fact (obtained by energy-based resolvent estimates) that
$\cL-\lambda$ is coercive for $\Re \lambda \ge \eta_0$ shows by
elliptic theory that the resolvent is well-defined and unique in
class of distributions for $\Re \lambda $ large, and thus the
resolvent kernel may be determined by the usual construction using
appropriate jump conditions. That is, from standard considerations,
we see that the construction {\it must} work, despite the apparent
wrong dimensions of decaying manifolds (which happens for any $\Re
\lambda >0$).}
\end{remark}

To deal with the singularity of the first-order system is the most
delicate and novel part of the present analysis. It is our hope that
the methods we use here may be of use also in other situations where
the resolvent equation becomes singular, for example in the closely
related situation of relaxation systems discussed in
\cite{MaZ1,MaZ5}.

\section{Construction of the resolvent kernel}\label{sec:resolker}

\subsection{Outline}
%CHANGED
In what follows we shall denote $' = \partial_x$ for simplicity; we also write $A(x) = A(U)$ and $B(x) = B(U)$.
%ENDCHANGED
Let us now construct the resolvent kernel for  $\cL$, or
equivalently, the solution of \eqref{eq:linLaplace} with
delta-function source in the $u$ component. The novelty in the
present case is the extension of
%the method
this standard method
to a situation in which the spectral problem can only be written as
a {\it degenerate} first order ODE. Unlike the real viscosity and
relaxation cases \cite{MaZ1,MaZ3,MaZ4,MaZ5} (where the operator $L$,
although degenerate, yields a non-degenerate first order ODE in an
appropriate reduced space), here we deal with a system of form
\[
\Theta W' = \A(x,\lambda)W,
\]
where
%CHANGED
%\[
%\Theta = \begin{pmatrix} a & \\ & I_2 \end{pmatrix},
%\]
\[
\Theta = \begin{pmatrix} A & \\ & I_2 \end{pmatrix},
\]
%ENDCHANGED
is degenerate at $x=0$.
%Recall the associated densely defined operators in $L^2$,
%\[
%\cT(\lambda) := \Theta W' - \A(x,\lambda) W,
%\]
%with domain $D(\cT) = H^1$; $\lambda$ is in the resolvent of problem
%\eqref{eq:linLaplace2} if $\cT(\lambda)$ is one-to-one, onto and
%$\cT(\lambda)^{-1}$ is a bounded operator.
%NO! This is bringing in unnecessary things.

To construct the resolvent kernel we solve
\begin{equation}
\label{eq:restype} (\Theta \partial_x - \A(x,\lambda))
\cG_\lambda(x,y) = \delta_y(x),
\end{equation}
in the distributional sense, so that
\begin{equation}
\label{eq:i} (\Theta \partial_x - \A(x,\lambda)) \cG_\lambda(x,y) =
0,
\end{equation}
in the distributional sense for all $x\ne y$ with appropriate jump
conditions (to be determined) at $x=y$. The first entry of the
three-vector $\cG_\lambda$ is the resolvent kernel $G_\lambda$ of
$\cL$ that we seek.

Namely $\cG_\lambda$, is the solution in the sense of distribution
of system (\ref{eq:linLaplace}) (written in conservation form):
%CHANGED
%\begin{equation}
%    \begin{cases}
%    (a(x)u)' = - \left ( \lambda  + Lg \right )u +L p +
%    \delta_{y}(x)&\\
%    q' = gu - p & \\
%    p' = - q.
%    \end{cases}
%      \label{eq:resolker}
%\end{equation}
\begin{equation}
    \begin{aligned}
    (Au)' &= - \left ( \lambda  + LB \right )u +L p +
    \delta_{y}(x)\\
    q' &= Bu - p \\
    p' &= - q.
    \end{aligned}
      \label{eq:resolker}
\end{equation}
%ENDCHANGED
\subsection{Asymptotic behavior}

First, we study at the asymptotic behavior of solutions to the
spectral system
%CHANGED
%\begin{equation}
%\label{specsyst}
%\begin{aligned}
%(a(x)u)' &= - (\lambda + Lg(x)) u + Lp,\\
%q' &= g(x)u - p,\\
%p' &= -q,
%\end{aligned}
%\end{equation}
\begin{equation}
\label{specsyst}
\begin{aligned}
(A(x)u)' &= - (\lambda + LB(x)) u + Lp,\\
q' &= B(x)u - p,\\
p' &= -q,
\end{aligned}
\end{equation}
%ENDCHANGED
away from the singularity at $x=0$, and for values of $\lambda \neq
0$, $\Re \lambda \geq 0$. We pay special attention to the small
frequency regime, $\lambda \sim 0$. First, we diagonalize
%CHANGED
%$a$
$A$
%ENDCHANGED
 as
%CHANGED
%\begin{equation}\label{a-diag} \tilde a:=L_paR_p = \begin{pmatrix}
%a_- &&0\\&a_p&\\0&&a_+\end{pmatrix}
%\end{equation}
\begin{equation}\label{a-diag} \tilde A:=L_p A R_p = \begin{pmatrix}
%CHANGED
%A_- &&0\\&a_p&\\0&&A_+\end{pmatrix}
A_1^- &&0\\&a_p&\\0&&A_2^+\end{pmatrix}
%ENDCHANGED
\end{equation}
%ENDCHANGED
where
%CHANGED
%$a_-\le-\theta<0,$ $a_+\ge \theta>0,$
$A_1^-\le-\theta<0,$ $A_2^+\ge \theta>0,$
%ENDCHANGED
and $a_p\in \RR$, satisfying
$a_p(+\infty) <0< a_p(-\infty)$. Here, $L_p,R_p$ are bounded
matrices and $L_pR_p = I$. Defining $v:=L_pu$, we rewrite
\eqref{specsyst} as
%CHANGED
%\begin{equation}
%\label{specsys-v}
%\begin{aligned}
%(\tilde a(x)v)' &= - (\lambda + \tilde L\tilde g+L'_paR_p)v + \tilde Lp,\\
%q' &= \tilde g v - p,\\
%p' &= -q,
%\end{aligned}
%\end{equation}
\begin{equation}
\label{specsys-v}
\begin{aligned}
(\tilde A(x)v)' &= - (\lambda + \tilde L\tilde B+L'_p A R_p)v + \tilde Lp,\\
q' &= \tilde B v - p,\\
p' &= -q,
\end{aligned}
\end{equation}
%ENDCHANGED
where
%CHANGED
%$\tilde L:=L_pL$ and $\tilde g:=gR_p$.
$\tilde L:=L_pL$ and $\tilde B:=BR_p$.
%ENDCHANGED
Denote the limits of the coefficient as
%CHANGED
%\begin{align*}
%\tilde a_\pm &:= \lim_{x\to\pm\infty} \tilde a(x)% = df(u_\pm), \\
%\\
%\tilde g_\pm &:= \lim_{x\to\pm\infty} g(x)R_p. % = dG(u_\pm).
%\end{align*}
\begin{equation}
\tilde A_\pm := \lim_{x\to\pm\infty} \tilde A(x), \qquad \tilde B_\pm := \lim_{x\to\pm\infty} B(x)R_p.
\end{equation}
%ENDCHANGED
%From the structure of the wave we already have that $a_+ < 0 < a_-$.
The asymptotic system thus can be written as
\begin{equation}
\label{asympsyst}
 W' = \A_\pm(\lambda)W,
\end{equation}
where $W = (v,q,p)^\top$, and
%CHANGED
%\begin{equation}
%\A_\pm(\lambda) = \begin{pmatrix} -\tilde a_\pm^{-1}(\lambda +
%\tilde L_\pm\tilde g_\pm) & 0 & \tilde a_\pm^{-1}\tilde L \\ \tilde g_\pm & 0 & -1 \\
%0 & -1 & 0
%\end{pmatrix}.
%\end{equation}
\begin{equation}
\A_\pm(\lambda) = \begin{pmatrix} -\tilde A_\pm^{-1}(\lambda +
\tilde L_\pm\tilde B_\pm) & 0 & \tilde A_\pm^{-1}\tilde L \\ \tilde B_\pm & 0 & -1 \\
0 & -1 & 0
\end{pmatrix}.
\end{equation}
%ENDCHANGED

To determine the dimensions of the stable/unstable eigenspaces, let
$\lambda \in \R^+$ and $\lambda \to 0,+\infty$, respectively. The
$2\times 2$ lower right-corner matrix clearly gives one strictly
positive and one strictly negative eigenvalues (this later will give
one fast-decaying and one fast-growing modes). In the ``slow''
system (as $|\lambda|\to 0$), eigenvalues are \begin{equation}\label{e-values}\mu_j^\pm(\lambda) = -
\lambda /a_j^\pm + \cO(\lambda^2),\end{equation} where $a_j^\pm$ are eigenvalues of
%CHANGED
%$a(\pm\infty)$.
$A_\pm = A(\pm\infty)$.
%ENDCHANGED
Thus, we readily conclude that at $x=+\infty$, there are $p+1$
unstable eigenvalues and $n-p+1$ stable eigenvalues. The stable
$S^+(\lambda)$ and unstable $U^+(\lambda)$ manifolds (solutions
which decay, respectively, grow at $+\infty$) have, thus, dimensions
\begin{equation}
\label{dims+}
\begin{aligned}
\dim U^+(\lambda) &= p+1,\\
\dim S^+(\lambda) &= n-p+1,
\end{aligned}
\end{equation}
in $\Re \lambda > 0$. Likewise, there exist $n-p+1$ unstable
eigenvalues and $p$ stable eigenvalues so that the stable (solutions
which grow at $-\infty$) and unstable (solutions which decay at
$-\infty$) manifolds have dimensions
\begin{equation}
\label{dims-}
\begin{aligned}
\dim U^-(\lambda) &=p,\\
\dim S^-(\lambda) &=n-p+2.
\end{aligned}
\end{equation}
\begin{remark}\textup{
Notice that, unlike customary situations in the Evans function
literature \cite{AGJ,ZH,GZ,MaZ1,MaZ3,San}, here the dimensions of
the stable (resp. unstable) manifolds $S^+$ and $S^-$ (resp. $U^+$
and $U^-$) \emph{do not agree}. Under these considerations, we look
at the dispersion relation
\[
\pi_\pm(i\xi) = -i\xi^3 - A_\pm^{-1}(\lambda + LB_\pm)\xi^2 -i\xi
-A_\pm^{-1} = 0.
\]
For each $\xi \in \R$, the $\lambda$-roots of the last equation define
algebraic curves
\[
%CHANGED (vector version)
%\lambda_\pm(\xi) = (1+LB_\pm \xi)^{-1} (- \xi^2 + i A_\pm
\lambda_j^\pm(\xi) \in \sigma (1+LB_\pm \xi)^{-1} (- \xi^2 + i A_\pm
\xi(1+\xi^2)), \quad \xi \in \R,
\]
touching the origin at $\xi =0$. Denote $\Lambda$ as the open
connected subset of $\C$ bounded on the left by the
%CHANGED
%two curves
rightmost envelope of the curves
$\lambda_j^\pm(\xi)$, $\xi \in \R$. Note that the set $\{ \Re \lambda
\geq 0, \lambda \neq 0\}$ is properly contained in $\Lambda$. By
connectedness the dimensions of $U^\pm(\lambda)$ and
$S^\pm(\lambda)$ do not change in $\lambda \in \Lambda$. We define
$\Lambda$ as the set of \emph{(not so) consistent splitting}
\cite{AGJ}, in which the matrices $\A_\pm(\lambda)$ remain
hyperbolic, with not necessarily agreeing dimensions of stable
(resp. unstable) manifolds.}
\end{remark}

\begin{lemma}
\label{lem:asymmodes} For each $\lambda \in
\Lambda$, the spectral system \eqref{asympsyst} associated to the
limiting, constant coefficients asymptotic behavior of
\eqref{specsyst}, has a basis of solutions
\[
e^{\mu^\pm_j(\lambda) x} V_j^\pm(\lambda), \quad x \gtrless 0, \:
j=1,...,n+2.
\]
Moreover, for $|\lambda| \sim 0$, we can find analytic
representations for $\mu_j^\pm$ and $V_j^\pm$, which consist of $2n$
slow modes
\[
\mu_j^\pm(\lambda) = -\lambda/a_j^\pm + \cO(\lambda^2),
\qquad j=2,...,n+1,
\]
and four fast modes,
\[
\begin{aligned}
\mu^\pm_1 (\lambda) &= \pm \theta^\pm_1 + \cO(\lambda), \\
\mu^\pm_{n+2} (\lambda) &= \mp \theta^\pm_{n+2} + \cO(\lambda).
\end{aligned}
\]where $\theta^\pm_1$ and $\theta^\pm_{n+2}$ are positive
constants.
%with associated eigenvectors \eqref{evectors}.
\end{lemma}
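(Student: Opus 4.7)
My plan is to reduce the eigenvalue problem for $\mathcal{A}_\pm(\lambda)$ to a scalar dispersion relation in $v$ alone and then to handle the slow and fast branches by expansion around $\lambda = 0$.

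\textbf{Step 1 (Reduction to a scalar relation).} Writing $W = (v, q, p)^\top$, I treat the $q$- and $p$-components of $\mathcal{A}_\pm(\lambda) W = \mu W$ as a $2\times 2$ linear system for $(q,p)$; it is invertible for $\mu^2 \ne 1$, yielding $p = \tilde B_\pm v/(1-\mu^2)$ and $q = -\mu p$. Substitution into the $v$-component gives the reduced relation
\begin{equation*}
\Bigl[\mu \tilde A_\pm + \lambda I - \tfrac{\mu^2}{1-\mu^2}\,\tilde L_\pm \tilde B_\pm\Bigr] v = 0,
\end{equation*}
equivalent to $\det(\mathcal{A}_\pm(\lambda) - \mu I) = 0$ together with the explicit recovery formulas for $q,p$.

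\textbf{Step 2 (Structure at $\lambda = 0$).} At $\lambda = 0$, I would verify directly that $\mu = 0$ is a semisimple eigenvalue of $\mathcal{A}_\pm(0)$ with $n$-dimensional eigenspace $\{(v, 0, \tilde B_\pm v)\}$, by computing kernel and range and checking their intersection is trivial (using invertibility of $\tilde A_\pm$ from \eqref{H1}). The two remaining nonzero roots come from the ansatz $v = c\,\tilde A_\pm^{-1}\tilde L_\pm$, forced by the rank-one structure of $\tilde L_\pm \tilde B_\pm$, which collapses the dispersion relation to the scalar quadratic $\mu^2 + (\tilde B_\pm \tilde A_\pm^{-1}\tilde L_\pm)\mu - 1 = 0$. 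Since its root product is $-1$, the two roots are real, nonzero, and of opposite sign; I label them $\pm\theta_1^\pm$ and $\mp\theta_{n+2}^\pm$ with $\theta_1^\pm, \theta_{n+2}^\pm > 0$, exhausting the $n+2$ eigenvalues at $\lambda=0$.

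\textbf{Step 3 (Perturbation in $\lambda$).} The two fast roots being simple at $\lambda = 0$, the analytic implicit function theorem extends them to analytic branches $\mu_1^\pm(\lambda)$, $\mu_{n+2}^\pm(\lambda)$ with analytic eigenvectors and the claimed $\cO(\lambda)$ remainders. For the slow modes, I substitute $\mu = \lambda m + \cO(\lambda^2)$ into the reduced relation: the term $\mu^2/(1-\mu^2)$ is $\cO(\lambda^2)$ and drops out of the leading balance, leaving $(\tilde A_\pm m + I)v = 0$, so $m = -1/a_j^\pm$ with $a_j^\pm \in \sigma(\tilde A_\pm)$; by \eqref{H1} all $a_j^\pm$ are nonzero. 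Analyticity of each of the $n$ slow branches and its eigenvector follows by applying the implicit function theorem within each spectral subspace of $\tilde A_\pm$, noting that \eqref{S1} makes $\tilde A_\pm$ diagonalizable (it is similar to a real symmetric matrix via $A_0^{1/2}$).

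\textbf{Main obstacle.} The principal difficulty is the $n$-fold multiplicity of the slow eigenvalue at $\lambda = 0$, which rules out naive simple-eigenvalue perturbation on the full $(n+2)$-dimensional system. The scalar reduction in Step 1 resolves this by exposing the correct slow scaling $\mu \sim \lambda$ and turning the leading-order problem into the ordinary spectral problem for $\tilde A_\pm^{-1}$; once semisimplicity is verified in Step 2, no fractional-power branches appear and all $n+2$ modes are analytic in a neighborhood of $\lambda = 0$.
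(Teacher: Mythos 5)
Your proof is correct, and since the paper states Lemma~\ref{lem:asymmodes} without proof (relying only on the heuristic in the preceding paragraph that the lower-right $2\times2$ block of $\A_\pm$, with spectrum $\{+1,-1\}$, supplies the fast modes and the upper-left $n\times n$ block supplies the slow ones), your Schur-type elimination of $(q,p)$ is a genuine formalization rather than a reproduction. It is in fact slightly sharper than the paper's sketch: the exact fast dispersion at $\lambda=0$ is $\mu^2+(\tilde B_\pm\tilde A_\pm^{-1}\tilde L_\pm)\mu-1=0$, whose roots equal $\pm1$ only when the coupling scalar $\tilde B_\pm\tilde A_\pm^{-1}\tilde L_\pm=B_\pm A_\pm^{-1}L$ vanishes; the invariant that the product of roots is $-1$, hence two real roots of opposite sign, is the right replacement for the heuristic and is exactly what the dimension counts \eqref{dims+}--\eqref{dims-} require. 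Two small points deserve to be made explicit. First, your elimination of $(q,p)$ assumes $\mu^2\ne1$; a one-line check that $\pm1\notin\sigma(\A_\pm(0))$ whenever $B_\pm A_\pm^{-1}L\ne0$ (and that the eigenvalue count is unaffected in the borderline case, where the fast roots land on $\pm1$) closes this gap. Second, the appeal in Step~3 to the implicit function theorem ``within each spectral subspace of $\tilde A_\pm$'' is too quick if some $a_j^\pm$ is a repeated eigenvalue (only $a_p$ is assumed simple in \eqref{S1}): for a $k$-fold eigenvalue $a_*$, naive perturbation of the semisimple root $\mu=0$ could in principle produce Puiseux branching, which would ruin the claimed $\cO(\lambda^2)$ remainder. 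The rank-one structure of $\tilde L_\pm\tilde B_\pm$ rescues the argument: after a Schur reduction onto the $k$-dimensional $a_*$-eigenspace the relation collapses to $(1+\nu a_*)\bigl[(1+\nu a_*)-h(\nu,\lambda)\,\tilde B_E\tilde L_E\bigr]=0$ with $h$ analytic and $h(\cdot,0)\equiv0$, giving $k-1$ branches with $\nu\equiv-1/a_*$ identically and one branch by the scalar implicit function theorem; thus all slow modes are analytic. Adding these two remarks would make the write-up airtight.
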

%\begin{proof}

%\end{proof}

In view of the structure of the asymptotic systems, we are able to
conclude that for each initial condition $x_0
> 0$, the solutions to \eqref{specsyst} in $x \geq x_0$ are spanned
by %two growing modes $\{\psi_1^+(x,\lambda), \psi^+_2(x,\lambda)\}$,
decaying/growing modes
\begin{equation}\label{phi+}\begin{aligned}\Phi^+:&=\{\phi^+_1,...,\phi^+_{n-p+1}\},
\\\Psi^+:&=\{\psi_{n-p+2}^+,...,\psi_{n+2}^+\},\end{aligned}\end{equation}
as $x \to +\infty$, whereas for each initial condition $x_0 < 0$,
the solutions to \eqref{specsyst} are spanned in $x < x_0$ by
growing/decaying modes
\begin{equation}\label{phi-}\begin{aligned}\Psi^-:&=\{ \psi_1^-,...,
\psi^-_{n-p+2}\},\\
\Phi^-:&=\{\phi_{n-p+3}^-,...,\phi^-_{n+2}\},\end{aligned}\end{equation}
as $x \to -\infty$.

We rely on the conjugation lemma of \cite{MeZ1} to link such modes
to those of the limiting constant coefficient system
\eqref{asympsyst}.
\begin{lemma}\label{lem-estmodes}
For $|\lambda|$ sufficiently small, there exist growing and decaying
solutions $\psi^\pm_j(x,\lambda),\phi_j^\pm(x,\lambda)$, in $x
\gtrless 0$, of class $C^1$ in $x$ and analytic in $\lambda$,
satisfying
\begin{equation}\label{est-modes}
\begin{aligned}
\psi^\pm_j(x,\lambda) &= e^{\mu_j^\pm(\lambda)x} V_j^\pm(\lambda) (I
+ \cO(e^{-\eta|x|})), \\
\phi^\pm_j(x,\lambda) &= e^{\mu_j^\pm(\lambda)x} V_j^\pm(\lambda) (I
+ \cO(e^{-\eta|x|})),
\end{aligned}
\end{equation}
where $0 < \eta$ is the decay rate of the traveling wave, and
$\mu_j^\pm$ and $V_j^\pm$ are as in Lemma \ref{lem:asymmodes} above.
\end{lemma}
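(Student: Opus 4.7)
The plan is to view this as a direct application of the gap/conjugation lemma of Métivier--Zumbrun \cite{MeZ1}, once the singularity at $x=0$ has been set aside. Since $A(x)$ (and hence $\tilde A(x)$) is invertible on any half-line of the form $x\ge x_0>0$ or $x\le x_0<0$, we may multiply \eqref{specsys-v} on the left by $\Theta(x)^{-1}$ and rewrite the spectral system as a genuine first-order ODE
\[
W' = \mathbb{A}(x,\lambda)W, \qquad \mathbb{A}(x,\lambda) := \Theta(x)^{-1}\mathcal{A}(x,\lambda),
\]
on each of the two half-lines. The profile existence theory of \cite{LMS1,LMS2}, invoked at the small-amplitude level, guarantees that $(U(x)-u_\pm,Q(x))$ and its derivative decay exponentially at rate $\eta>0$ as $x\to\pm\infty$; since $\mathbb{A}(x,\lambda)$ depends smoothly on $(U,Q)$, it follows that
\[
\mathbb{A}(x,\lambda) - \mathbb{A}_\pm(\lambda) = \mathcal{O}(e^{-\eta|x|}), \qquad x\to\pm\infty,
\]
with analytic dependence on $\lambda$ inherited from the algebraic dependence of $\mathbb{A}$ on $\lambda$.

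With this decay in hand, the conjugation lemma of \cite{MeZ1} furnishes, for $|\lambda|$ sufficiently small and on each half-line $\pm x\ge x_0$, an invertible change of coordinates $W = P^\pm(x,\lambda)Z$ of class $C^1$ in $x$ and analytic in $\lambda$ with
\[
P^\pm(x,\lambda) = I + \mathcal{O}(e^{-\eta|x|}), \qquad x\to\pm\infty,
\]
that conjugates the variable-coefficient system to the constant-coefficient asymptotic system \eqref{asympsyst}. (The construction of $P^\pm$ is by a standard contraction-mapping argument on a fixed-point equation for $P^\pm - I$ in an exponentially weighted sup-norm, which automatically preserves analyticity in $\lambda$.) Crucially, the conjugation lemma requires no spectral gap between modes of $\mathbb{A}_\pm(\lambda)$; it needs only exponential convergence to the limit, so the near-coalescence of the slow eigenvalues $\mu_j^\pm(\lambda) = -\lambda/a_j^\pm + \mathcal{O}(\lambda^2)$ as $\lambda\to 0$ is not an issue.

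Once $P^\pm$ is constructed, Lemma \ref{lem:asymmodes} provides an analytic basis $e^{\mu_j^\pm(\lambda)x}V_j^\pm(\lambda)$ of solutions of \eqref{asympsyst}, and the corresponding solutions of the variable-coefficient system are simply
\[
\psi_j^\pm(x,\lambda), \; \phi_j^\pm(x,\lambda) := P^\pm(x,\lambda)e^{\mu_j^\pm(\lambda)x}V_j^\pm(\lambda),
\]
which upon expanding $P^\pm = I + \mathcal{O}(e^{-\eta|x|})$ yields precisely \eqref{est-modes}. Sorting these solutions into the growing and decaying groups \eqref{phi+}--\eqref{phi-} is done by reading off the sign of $\Re\mu_j^\pm(\lambda)$ from Lemma \ref{lem:asymmodes}. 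Regularity in $x$ and analyticity in $\lambda$ are inherited from the corresponding properties of $P^\pm$ and of $\mu_j^\pm,V_j^\pm$.

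The only point that requires a moment's care, and is the main (minor) obstacle, is to confirm that the application of the conjugation lemma extends uniformly down to $|\lambda|$ arbitrarily small, despite the coefficient matrix $\mathbb{A}_\pm(\lambda)$ developing near-zero eigenvalues at $\lambda=0$; this is exactly the regime where \cite{MeZ1} is designed to apply, since the decay of $\mathbb{A}(x,\lambda)-\mathbb{A}_\pm(\lambda)$ at rate $\eta$ is uniform in $\lambda$ in a small neighborhood of the origin, and the contraction argument then closes uniformly on that neighborhood. Having secured the lemma on each half-line $\pm x \ge x_0$, we are free to choose $x_0$ as close to $0$ as desired; the behavior at the singular point $x=0$ itself will be treated in the subsequent subsection and does not enter here.
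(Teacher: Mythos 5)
Your proposal is correct and takes essentially the same approach as the paper: the paper's proof is a one-line citation of the conjugation lemma of \cite{MeZ1}, and what you have written is a careful unwinding of exactly how that lemma applies here, namely restricting to half-lines $\pm x \ge x_0 > 0$ where $\Theta$ is invertible, observing exponential decay of the coefficients to their limits, and transporting the constant-coefficient basis of Lemma \ref{lem:asymmodes} via the conjugator $P^\pm = I + \mathcal{O}(e^{-\eta|x|})$. Your remark that no spectral gap is required (only exponential coefficient decay, uniformly in a small $\lambda$-neighborhood) is precisely the reason \cite{MeZ1} rather than a gap-type lemma is invoked, and matches the paper's intent.
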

\begin{proof}
This a direct application of the conjugation lemma of \cite{MeZ1}
(see also the related gap lemma in \cite{GZ,ZH,MaZ1,MaZ3}).
\end{proof}

\subsection{Solutions near $x \sim 0$}

Our goal now is to analyze system \eqref{specsyst} close to the
singularity $x=0$. To fix ideas, let us again stick to the case
$x>0$, the case $x<0$ being equivalent. We introduce a ``stretched''
variable $\xi$ as follows:
\begin{equation*}
    \xi = \int_{1}^{x}\frac{dz}{a_p(z)},% + \xi(1),
\end{equation*}
so that $\xi(1) = 0$, and $\xi \to +\infty$ as $x \to 0^+$. Under
this change of variables we get
\begin{equation*}
    u' = \frac{du}{dx} = \frac{1}{a_p(x)}\frac{du}{d\xi} =
    \frac{1}{a_p(x)}\dot{u},
\end{equation*}
and denoting $\;\dot{ }$ $= d/d\xi$.
%and
%\begin{equation*}
%    \xi \to +\infty\ \hbox{for}\ x\to 0^{+}.
%\end{equation*}
In the stretched variables, making some further changes of variables
if necessary, the system \eqref{specsys-v} becomes a
block-diagonalized system at leading order of the form
\begin{equation}
\label{eq:strechtedsyst} \dot{Z} = \begin{pmatrix} -\alpha & 0 \\ 0&
0 \end{pmatrix} + a_p(\xi) \Theta(\xi) Z,
\end{equation} where $\Theta(\xi)$ is some bounded matrix and
$\alpha$ is the $(p,p)$ entry of the matrix
%CHANGED
%$\lambda + \tilde L\tilde g+L'_paR_p + \tilde a'$,
$\lambda + \tilde L\tilde B+L'_p A R_p + \tilde A'$,
%ENDCHANGED
noting that $$\alpha(\xi) \ge
\delta_0>0,$$ for some $\delta_0$ and any $\xi$ sufficiently large
or $x$ sufficiently near zero.

The blocks $-\alpha I$ and $0$ are clearly spectrally separated and
the error is of order $\cO(|a_p(\xi)|) \to 0$ as $\xi \to +\infty$.
By the pointwise reduction lemma (see Lemma \ref{pwrl} and Remark
\ref{rem:reduced} below), we can separate the flow into slow and
fast coordinates. Indeed, after proper transformations we separate
the flows on the reduced manifolds of form
%CHANGED
% \begin{align}
% \dot{Z_1} &= - \alpha Z_1 + \cO(\tilde a) Z_1,\\
% \dot{Z_2} &= \cO(\tilde a) Z_2.
% \end{align}
\begin{align}
\dot{Z_1} &= - \alpha Z_1 + \cO(a_p) Z_1,\\
\dot{Z_2} &= \cO(a_p) Z_2.
\end{align}
%ENDCHANGED

Since $-\alpha \leq -\delta_0 < 0$ for $\lambda \sim 0$ and $\xi
\geq 1/\epsilon$, with $\epsilon > 0$ sufficiently small, and since
$a_p(\xi) \to 0$ as $\xi \to +\infty$, the $Z_1$ mode decay to zero
as $\xi \to +\infty$, in view of
\[
e^{-\int_0^\xi \alpha(z) \, dz} \lesssim e^{-(\Re \lambda + \half
\delta_0) \xi}.
\]

These fast decaying modes correspond to fast decaying to zero
solutions when $x \to 0^+$ in the original $u$-variable. The $Z_2$
modes comprise slow dynamics of the flow as $x \to 0^+$.

%, which in terms of the original variable $x$ have the form
%\[
%Z_2' = M_2 Z_2 + \cO(a(x)) Z_2,
%\]

%\hole{TO DO:  ``full rank condition'' for $(p,q)$ $\Longrightarrow$
%right number of parameters for decay modes through zero}

\begin{proposition}
\label{prop:smallep} There exists $0 < \epsilon_0 \ll 1$
sufficiently small, such that, in the small frequency regime
$\lambda \sim 0$, the solutions to the spectral system
\eqref{specsyst} in $(-\epsilon_0,0) \cup (0,\epsilon_0)$ are
spanned by fast modes
\begin{equation}
\label{wk} w_{k_p}^\pm(x,\lambda) = \begin{pmatrix} \tilde u_{k_p}^\pm
\\ \tilde q_{k_p}^\pm \\ \tilde p_{k_p}^\pm \end{pmatrix} \qquad \pm\epsilon_0 \gtrless x
\gtrless 0,
\end{equation}
%for $x \in (0, \epsilon_0)$ and $x \in (-\epsilon_0)$,
decaying to zero as $x \to 0^\pm$, and slowly varying modes
\begin{equation}
z_j^\pm(x,\lambda) = \begin{pmatrix} \tilde u_j^\pm
\\ \tilde q_j^\pm \\ \tilde p_j^\pm \end{pmatrix},  \qquad \pm\epsilon_0 \gtrless x \gtrless 0,\label{z13}
\end{equation}
with bounded limits as $x \to 0^\pm$.

Moreover, the fast modes \eqref{wk} decay as
\begin{equation}
\label{decayu2} \tilde u_{{k_p}p}^\pm \sim |x|^{\alpha_0} \to 0
\end{equation} and
\begin{equation}
\label{decaypq2}
\begin{pmatrix} \tilde u_{{k_p}j}^\pm \\\tilde q_{k_p}^\pm \\ \tilde p_{k_p}^\pm \end{pmatrix} \sim
\cO(|x|^{\alpha_0}a_p(x)) \to 0, \qquad j\not=p,
\end{equation}
as $x \to 0^\pm$; here, $\alpha_0$ is some positive constant and $u_{k_p}=(u_{{k_p}1},...,u_{{k_p}p},...,u_{{k_p}n})^\top$.
\end{proposition}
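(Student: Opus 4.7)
The plan is to start from the block-diagonal form \eqref{eq:strechtedsyst} in the stretched coordinate $\xi$, in which the principal $(p,p)$-entry is separated from the remaining $(n+1)\times(n+1)$ block by the spectral gap $-\alpha(\xi) \leq -\delta_0 < 0$, while the perturbation is of uniform order $a_p(\xi) \to 0$ as $\xi \to +\infty$. Apply the pointwise reduction lemma (Lemma \ref{pwrl}) to split the $(n+2)$-dimensional flow near $\xi=+\infty$ into a one-dimensional fast invariant manifold and an $(n+1)$-dimensional slow invariant manifold, decoupled up to error quadratic in the perturbation. Solve each block in the stretched variable, then invert the changes of variables back to the original $(u,q,p)$ in $x$.

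For the fast block, the equation reduces to the scalar ODE $\dot Z_1 = -\alpha(\xi)(1 + \mathcal{O}(a_p))Z_1$, solvable by quadrature, giving $|Z_1(\xi)| \lesssim e^{-\delta_0 \xi}$ uniformly for $\lambda \sim 0$. To convert this rate back into the original variable, use that by genuine nonlinearity \eqref{H2} one has $a_p(x) = a_p'(0)\,x + \mathcal{O}(x^2)$ with $a_p'(0)\neq 0$, so
\[
\xi \;=\; \int_1^x \frac{dz}{a_p(z)} \;=\; \frac{1}{|a_p'(0)|}\log\frac{1}{|x|} + \mathcal{O}(1),
\qquad x\to 0^+.
\]
Therefore $e^{-\delta_0\xi} \sim |x|^{\alpha_0}$ with $\alpha_0 := \delta_0/|a_p'(0)|>0$, giving the decay \eqref{decayu2}. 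The analogous computation on $(-\epsilon_0,0)$ produces the symmetric statement.

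Next, invert the coordinate changes. Because the block-diagonalization was carried out in $v = L_p u$ (and subsequent transformations that are uniformly invertible), the fast direction $Z_1$ corresponds at leading order to the $p$-th slot of $v$; mapping back through $u = R_p v$, the component $u_{k_p p}$ inherits the full $|x|^{\alpha_0}$ decay, while the remaining components $(u_{k_p j})_{j\ne p}$ and the radiation slots $q_{k_p}$, $p_{k_p}$ are tied to $Z_1$ only via the off-diagonal perturbation $a_p(\xi)\Theta(\xi)$ and the algebraic slow-to-fast coupling from the reduction lemma, and hence inherit an extra factor of $a_p(x)$, producing \eqref{decaypq2}. For the $(n+1)$-dimensional slow block $\dot Z_2 = \mathcal{O}(a_p(\xi))\,Z_2$, the change of variables $a_p(\xi)\,d\xi = dx$ shows that the integrating factor is $\mathcal{O}(1)$ uniformly as $x\to 0^\pm$, so each solution on the slow manifold extends continuously to $x=0$ with a bounded limit, yielding the $n+1$ modes $z_j^\pm$ of \eqref{z13}. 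Counting gives $1 + (n+1) = n+2$ independent solutions, spanning the space, and analyticity in $\lambda$ near $0$ follows from the parameter-dependent version of the reduction lemma.

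The principal obstacle is justifying the reduction lemma all the way up to the singular endpoint $\xi=+\infty$: one must check that the coefficient $a_p(\xi)\Theta(\xi)$, although only of order $a_p(\xi)$ pointwise, is \emph{integrable} in $\xi$ in an appropriate sense (indeed $\int^\infty a_p\,d\xi = \int^{0^+} dx$ is finite), so that the invariant manifolds exist on the full half-line, and simultaneously that the gap $\alpha\ge\delta_0$ and hence the exponent $\alpha_0$ remain positive uniformly for $\lambda$ in a small neighborhood of $0$. Once this is in hand, tracking the exact factor $a_p(x)$ in the off-principal slots (as opposed to merely $\mathcal{O}(x)$) is the second delicate point, and is what requires keeping the block structure of \eqref{eq:strechtedsyst} precise rather than just a spectral-gap estimate.
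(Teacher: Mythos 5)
Your argument follows the same route the paper implicitly takes: the paper states Proposition~\ref{prop:smallep} without a formal proof, leaving the reader to assemble the stretched variable $\xi=\int_1^x dz/a_p(z)$, the block-diagonal form \eqref{eq:strechtedsyst}, and the pointwise reduction lemma into the conclusion, which is exactly what you have done, including the correct translation $e^{-\delta_0\xi}\sim|x|^{\alpha_0}$ via $\xi\sim|a_p'(0)|^{-1}\log(1/|x|)$. One small refinement worth noting: the paper only exhibits the lower bound $\alpha(\xi)\ge\delta_0$, giving $\alpha_0=\delta_0/|a_p'(0)|$ as you obtain, but to match the sharper exponent $\alpha=(LB(0)+a_p'(0))/|a_p'(0)|$ appearing later in Proposition~\ref{prop-greenbounds} one should instead use the actual limit of $\alpha(\xi)$ as $\xi\to+\infty$ (i.e.\ as $x\to 0$), not merely the uniform lower bound; this changes nothing in the present proposition, which only asserts that $\alpha_0$ is ``some positive constant.''
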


\subsection{Two Evans functions}
We first define the following related Evans functions
\begin{equation}
\label{rEvans} D_\pm(y,\lambda) := \det (\Phi^+\;W_{k_p}^{\mp} \;
\Phi^-)(y,\lambda), \qquad \mbox{for } y \gtrless 0,
\end{equation} where $\Phi^\pm$ are defined as in \eqref{phi+}, \eqref{phi-}, and
$W_{k_p}^\pm = (u_{k_p}^\pm,q_{k_p}^\pm,p_{k_p}^\pm)^\top$ are defined as in
\eqref{wk}. Note that ${k_p}$ here is always fixed and equals to
$n-p+2$.

We first observe the following simple properties of $D_\pm$.
\begin{lemma}\label{lem-Evansfns} For $\lambda$ sufficiently small, we have
%\begin{equation}\label{Evansfns1}\begin{aligned}
%D_\pm(y,\lambda)&=-L^{-1}_p\lambda [u]_p D_\pm^{n+1,n+2}(y,0) +
%\cO(|\lambda|^2),
%\end{aligned}\end{equation}
\begin{equation}\label{Evansfns1}\begin{aligned}
D_\pm(y,\lambda)&=(\det A)^{-1}\gamma_\pm(y)
\Delta\lambda +
\cO(|\lambda|^2),
\end{aligned}\end{equation}
%and
%\begin{equation}\label{Evansfns2}\begin{aligned}
%D_\pm(y,\lambda)&=-a^{-1}_p\lambda[u]_p D_\pm^{p,n+2}(y,0)+
%\cO(|\lambda|^2),%\\
%\end{aligned}\end{equation}
where 
\begin{equation}\label{Lop-det}\begin{aligned}\Delta &:= \det\begin{pmatrix}r_2^+&\cdots&r^+_{k_p-1}&r_{k_p+1}^-&\cdots&r_{n+1}^-&-[u]
\end{pmatrix}
\\
\gamma_\pm(y)&:=\det\begin{pmatrix}q_1^+&q_{k_p}^\mp\\p_1^+&p_{k_p}^\mp\end{pmatrix}_{|_{\lambda=0}}
\end{aligned}\end{equation}
with $[u] = u_+ - u_-$ and $r_j^\pm$ eigenvectors of $(A_\pm)^{-1}(LB)_\pm$, spanning the stable/unstable subspaces at $\pm\infty$, respectively. 
%$D_\pm^{ij} = (\Phi^+\;W_k^{\mp} \;
%\Phi^-)^{ij}$; here, $A^{ij}$ denotes the $(n+1)\times (n+1)$ minor
%determinant of $A$ at $(i,j)$.
\end{lemma}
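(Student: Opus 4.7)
The plan is the standard two-step argument for an Evans function near a translation-induced zero: first verify $D_\pm(y,0) = 0$, then compute $\partial_\lambda D_\pm(y,0)$ by multilinear expansion of the determinant. For the vanishing at $\lambda=0$, the profile derivative $(U', Q', Q'')^\top$ is a classical solution of \eqref{specsyst} at $\lambda = 0$ decaying exponentially at both $\pm\infty$, thanks to the regularity and exponential convergence of the profile (cf.\ \cite{LMS1,LMS2}). By Lemma \ref{lem-estmodes}, this mode lies in the span of $\Phi^+(y,0)$ (which exhausts the solutions decaying at $+\infty$ on $y > 0$); simultaneously, using the fast mode $W_{k_p}^\mp(y,0)$ from Proposition \ref{prop:smallep} to bridge the singularity at $x=0$ to the modes $\Phi^-(y,0)$ decaying at $-\infty$, the same translation mode lies in the span of $(W_{k_p}^\mp, \Phi^-)(y,0)$. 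Equating the two representations produces a nontrivial linear dependence among the $n+2$ columns of $(\Phi^+, W_{k_p}^\mp, \Phi^-)(y,0)$, so $D_\pm(y,0) = 0$.

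For the $\lambda$-derivative, I expand each column in $\lambda$ using the analyticity guaranteed by Lemmas \ref{lem:asymmodes}--\ref{lem-estmodes}. The slow modes $\phi_j^\pm$ ($j = 2, \ldots, n+1$) have $u$-components that converge at $\pm\infty$ to the eigenvectors $r_j^\pm$ of $(A_\pm)^{-1}(LB)_\pm$, while the $(q,p)$-components vanish in that limit. Their $\lambda$-derivatives pick up a secular contribution from $\mu_j^\pm(\lambda) = -\lambda/a_j^\pm + \mathcal{O}(\lambda^2)$, which, integrated against the profile derivative, contributes the jump $[u] = u_+ - u_-$, in exact parallel with the classical viscous-shock computation. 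By multilinearity, $\partial_\lambda D_\pm(y,0)$ is a sum of $n+2$ determinants, each with exactly one column differentiated; all but one vanish by the same kernel relation used in the first step. The surviving term factors, via a block/Schur-complement decomposition of the $(n+2)\times(n+2)$ determinant, into an $n\times n$ determinant on $u$-components producing the Lopatinski determinant $\Delta$, and a $2\times 2$ determinant on the $(q,p)$-components of the two fast modes $\phi_1^+$ and $W_{k_p}^\mp$ producing $\gamma_\pm(y)$. The prefactor $(\det A)^{-1}$ is tracked through the diagonalization \eqref{a-diag} (with $L_p R_p = I$) and the stretched rescaling underlying Proposition \ref{prop:smallep}.

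The main obstacle I anticipate is the careful accounting needed to recover the precise coefficient in the presence of the singularity of $A$ at $x = 0$ and the nonstandard decay of $W_{k_p}^\mp$: the $u$-component vanishes like $|x|^{\alpha_0}$ while the $(q,p)$-components vanish like $|x|^{\alpha_0} a_p(x)$. The observation that unlocks the clean factorization is precisely this difference in scale: the $u$-component of the central fast mode vanishes strictly faster than its $(q,p)$-components, so that the $u$-block of the leading-order determinant is supplied entirely by the slow modes together with the single $\lambda$-derivative column contributing $-[u]$, while the $(q,p)$-block is supplied by the two fast modes. This separation of scales is what makes the product $\gamma_\pm(y)\Delta$ emerge, with the $(\det A)^{-1}$ prefactor arising naturally from the Jacobian of the change of variables between the original and diagonalized/stretched formulations.
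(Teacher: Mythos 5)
Your overall scaffolding is right — identify the translation mode to get $D_\pm(y,0)=0$, then compute $\partial_\lambda D_\pm(y,0)$ by Leibniz and factor — but the load-bearing step that actually produces \eqref{Evansfns1} is missing, and the mechanism you invoke in its place is not the correct one.

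The paper does not obtain the block factorization from the pointwise decay rates of $W_{k_p}^\mp$ near $x=0$ (those rates from Proposition \ref{prop:smallep} are used later, in Lemma \ref{lemma-mD}, not here). Instead, one integrates the ODEs. At $\lambda=0$, every column satisfies $(Au)'=-Lq'$; integrating from the endpoint where the mode vanishes gives the exact algebraic relations $Au_j^\pm=-Lq_j^\pm+r_j^\pm$ for the slow modes and $Au_j=-Lq_j$ (no $r$ term) for the two fast columns $\phi_1^+$ and $W_{k_p}^\mp$ as well as $\phi_{n+2}^-$. For the differentiated column one integrates the $\lambda$-derivative of the ODE, $\bigl(A\partial_\lambda u_1^+\bigr)_x=-L(\partial_\lambda q_1^+)_x-\bar u_x$ and its analogue for $\phi_{n+2}^-$, using $\partial_\lambda\phi_1^+(+\infty)=\partial_\lambda\phi_{n+2}^-(-\infty)=0$, to get $A(\partial_\lambda u_{n+2}^--\partial_\lambda u_1^+)=-L(\partial_\lambda q_{n+2}^--\partial_\lambda q_1^+)-[u]$. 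Multiplying the $u$-row by $A$ (that, not the diagonalization or the stretched rescaling, is what produces the $(\det A)^{-1}$ prefactor) and then subtracting $L$ times the $q$-row, the fast-mode columns acquire \emph{exactly zero} $u$-entries, and the Laplace expansion over the first $n$ rows forces the complementary choice of columns, yielding $\gamma_\pm(y)\,\Delta$. Without those integrated identities, there is no reason the $u$-block would be triangular in the columns, and indeed the $u$-component of a fast mode at a generic evaluation point $y$ is not small, so your scale-separation argument would not close. Two smaller inaccuracies: the Leibniz expansion leaves \emph{two} nonvanishing terms (columns $1$ and $n+2$), which are then combined into a single column; and the slow-mode $u$-components tend to $(A_\pm)^{-1}r_j^\pm$, not $r_j^\pm$ — this mismatch is exactly what the $(\det A)^{-1}$ row operation is there to absorb. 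Also, the boundary terms from integrating the $\lambda$-differentiated equation — not "integration against the profile derivative" — are what produce $[u]$.
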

\begin{proof}
By our choice, at $\lambda=0$, we can
take\begin{equation}\label{gchoice}\phi_1^+ (x,0)= \phi_{n+2}^-
(x,0) = \bar W_x(x)\end{equation} where $\bar W$ is the shock
profile. By Leibnitz' rule and using \eqref{gchoice}, we compute
$$\begin{aligned} \partial_\lambda D_-(y,0)&=
\det\Big(\partial_\lambda
\phi_1^+,...,\phi_{{k_p}-1}^+,W_{k_p}^+,\phi_{{k_p}+1}^-,...,\phi_{n+2}^-\Big)_{|_{\lambda=0}}+
\cdots\\&\qquad \cdots+\det\Big(
\phi_1^+,...,\phi_{{k_p}-1}^+,W_{k_p}^+,\phi_{{k_p}+1}^-,...,\partial_\lambda\phi_{n+2}^-\Big)_{|_{\lambda=0}},
\end{aligned}$$ where, by using \eqref{gchoice}, only the first and third terms are possibly nonvanishing and thus grouped
together, yielding\begin{equation}\label{der-D}\begin{aligned}
\partial_\lambda D_-(y,0)&=\det\Big(\phi_1^+,...,\phi_{{k_p}-1}^+,
W_{k_p}^+,\phi_{{k_p}+1}^-,...,\phi_{n+1}^-,\partial_\lambda\phi_{n+2}^- -
\partial_\lambda\phi^+_1\Big)_{|_{\lambda=0}}.
\end{aligned}\end{equation}

Recall that $W_{k_p}^+,\phi_j^\pm$ satisfy \begin{equation}\label{eqW}
\Theta W_x = \A(x,\lambda)W,\end{equation} where $W = (u,q,p)$ and
%CHANGED
% \[
% \Theta = \begin{pmatrix} a & \\ & I_2 \end{pmatrix}.
% \]
\[
\Theta = \begin{pmatrix} A & \\ & I_2 \end{pmatrix}.
\]
%ENDCHANGED
Thus, $\partial_\lambda \phi_1^+(x,\lambda)$ satisfies
$$\Theta (\partial_\lambda \phi_1^+)_x = \A(x,0)\partial_\lambda \phi_1^+(x,0) + \partial_\lambda\A(x,0)\phi_1^+(x,0), $$
which directly gives\begin{equation}\label{eq-W1}(a\partial_\lambda
u_1^+)_x = - L(\partial_\lambda q_1^+)_x - \bar u_x.\end{equation}

Likewise, $\partial_\lambda \phi_{n+2}^-(x,\lambda) =
(\partial_\lambda u_{n+2}^-,\partial_\lambda
q_{n+2}^-,\partial_\lambda p_{n+2}^-)$
satisfies\begin{equation}\label{eq-Wn2}(a\partial_\lambda
u_{n+2}^-)_x = - L(\partial_\lambda q_{n+2}^-)_x - \bar
u_x.\end{equation}

Integrating equations \eqref{eq-W1} and \eqref{eq-Wn2} from
$+\infty$ and $-\infty$, respectively, with use of boundary
conditions $\partial_\lambda \phi_1^+(+\infty) = \partial_\lambda
\phi_{n+2}^-(-\infty) =0$, we obtain
%CHANGED
% \begin{equation}\begin{aligned}a\partial_\lambda
% u_1^+&= - L\partial_\lambda q_1^+ - \bar u + u_+
% \\a\partial_\lambda u_{n+2}^-&= - L\partial_\lambda q_{n+2}^- - \bar
% u + u_-.\end{aligned}\end{equation}
\begin{equation}\begin{aligned}A\partial_\lambda
u_1^+&= - L\partial_\lambda q_1^+ - \bar u + u_+
\\A\partial_\lambda u_{n+2}^-&= - L\partial_\lambda q_{n+2}^- - \bar
u + u_-.\end{aligned}\end{equation}
%ENDCHANGED
and thus
%CHANGED
% \begin{equation}\label{col3}\begin{aligned}a(\partial_\lambda u_{n+2}^--\partial_\lambda
% u_1^+)&= - L(\partial_\lambda q_{n+2}^--\partial_\lambda q_1^+) -
% [u].\end{aligned}\end{equation}
\begin{equation}\label{col3}\begin{aligned}A(\partial_\lambda u_{n+2}^--\partial_\lambda
u_1^+)&= - L(\partial_\lambda q_{n+2}^--\partial_\lambda q_1^+) -
[u].\end{aligned}\end{equation}
%EndCHANGED

In addition, we note that $W_{k_p}^+,\phi_j^\pm$ satisfy the equation \eqref{eqW}
and thus
%CHANGED
%$(au)' = -Lq'$ with $W_{k_p}^+(+\infty)=\phi_j^+(+\infty)=0$
$(Au)' = -Lq'$ with $W_{k_p}^+(+\infty)=\phi_1^+(+\infty)=0$,
%ENDCHANGED
$\phi_{n+2}^-(-\infty)=0$, $q_j^\pm(\pm\infty)=0$, and $$\begin{aligned}&u_j^+(+\infty) = (A_+)^{-1}r_j^+, \qquad j=2,...,k_p-1
\\&u_j^-(-\infty) =  (A_-)^{-1}r_j^-, \qquad j=k_p+1,...,n+1.\end{aligned}$$ 
%where 
%$r_j^\pm$ are eigenvectors of $(A_\pm)^{-1}(LB)_\pm$, corresponding to eigenvalues $\mu_j^\pm$ defined as in \eqref{e-values}. 
Thus, we integrate the equation $(Au)' = -Lq'$,
yielding
%CHANGED
% \begin{equation}\label{col1-2}\begin{aligned} a u_j^+ &= -L q_j^+, \qquad \mbox{for  }j=1,...,{k_p}\\
% a u_j^- &= -L q_j^-, \qquad \mbox{for
% }j=k+1,...,n+2\end{aligned}\end{equation}
\begin{equation}\label{col1-2}\begin{aligned} A u_j^+ &= -L q_j^+, \qquad \mbox{for  }j=1,{k_p},\\
A u_j^+ &= -L q_j^+ + r_j^+, \qquad \mbox{for  }j=2,...,{k_p-1},\\
A u_j^- &= -L q_j^-+r_j^-, \qquad \mbox{for
}j=k_p+1,...,n+1\\A u_j^- &= -L q_j^-, \qquad \mbox{for  }j=n+2.\end{aligned}\end{equation}
%ENDCHANGED
Using estimates \eqref{col1-2} and \eqref{col3}, we can now compute
the $\lambda$-derivative \eqref{der-D} of $D_\pm$ at $\lambda=0$ as
\begin{equation}\label{derD-}\begin{aligned}
\partial_\lambda D_-(y,0)&=\det\begin{pmatrix}u_1^+&\cdots&u_j^+&\cdots&u_{k_p}^+&\cdots&u_j^-&\cdots&\partial_\lambda
u_{n+2}^--\partial_\lambda
u_1^+\\q_1^+&\cdots&q_j^+&\cdots&q_{k_p}^+&\cdots&q_j^-&\cdots&\partial_\lambda
q_{n+2}^--\partial_\lambda
q_1^+\\p_1^+&\cdots&p_j^+&\cdots&p_{k_p}^+&\cdots&p_j^-&\cdots&\partial_\lambda
p_{n+2}^--\partial_\lambda p_1^+\end{pmatrix}\\
&=(\det A)^{-1}\det\begin{pmatrix}0&\cdots&r_j^+&\cdots&0&\cdots&r_j^-&\cdots&-[u]
\\q_1^+&\cdots&q_j^+&\cdots&q_{k_p}^+&\cdots&q_j^-&\cdots&\partial_\lambda
q_{n+2}^--\partial_\lambda
q_1^+\\p_1^+&\cdots&p_j^+&\cdots&p_{k_p}^+&\cdots&p_j^-&\cdots&\partial_\lambda
p_{n+2}^--\partial_\lambda p_1^+\end{pmatrix}
\\
&=(\det A)^{-1}\det\begin{pmatrix}q_1^+&q_{k_p}^+\\p_1^+&p_{k_p}^+\end{pmatrix}
\det\begin{pmatrix}r_2^+&\cdots&r^+_{k_p-1}&r_{k_p+1}^-&\cdots&r_{n+1}^-&-[u]
\end{pmatrix}
\end{aligned}\end{equation}
which proves \eqref{Evansfns1}. %and
%\eqref{Evansfns2} for $D_-$. 
The proof for $D_+$ follows similarly.
\end{proof}

\begin{lemma}
%CHANGED
\label{lemma-mD}
%ENDCHANGED
Defining the Evans functions \begin{equation}\label{Evansfns-def}
D_\pm(\lambda): = D_\pm(\pm 1, \lambda),\end{equation} we then have
\begin{equation}\label{E-consistent}
D_+(\lambda) = m D_-(\lambda) +\cO(|\lambda|^2)\end{equation} where
$m$ is some nonzero factor.
\end{lemma}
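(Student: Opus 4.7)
The plan is to read \eqref{E-consistent} off directly from the expansion in Lemma~\ref{lem-Evansfns}. First I would evaluate that expansion at $y = +1$ and $y = -1$ respectively, obtaining
\begin{equation*}
D_\pm(\lambda) \;=\; c_\pm \,\Delta\, \lambda \;+\; \cO(|\lambda|^2),
\qquad c_\pm \,:=\, (\det A(\pm 1))^{-1}\gamma_\pm(\pm 1),
\end{equation*}
where the essential observation is that the factor $\Delta$, depending only on the asymptotic eigenvectors $r_j^\pm$ and the jump $[u]$, is identical in both formulas. Setting $m := c_+/c_-$ then yields
\begin{equation*}
D_+(\lambda) - m\,D_-(\lambda) \;=\; (c_+ - m c_-)\,\Delta\,\lambda + \cO(|\lambda|^2) \;=\; \cO(|\lambda|^2),
\end{equation*}
so that the entire content of the lemma is concentrated in the assertion that $m$ is well-defined and nonzero, i.e.\ $c_-, c_+ \ne 0$.

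That $\det A(\pm 1) \ne 0$ is immediate: under the small-amplitude hypothesis, the sonic singularity of $A(x)$ is localized to a small neighborhood of $x = 0$, so $A(\pm 1)$ is close to $A_\pm$, which is invertible by the Lax condition~\eqref{H1}. Similarly, $\Delta \ne 0$ is a Lopatinski-type identity that holds for any Lax shock under~\eqref{H0}--\eqref{H1}. The substantive and main step is therefore to show that the $2\times 2$ determinants
\begin{equation*}
\gamma_\pm(\pm 1) \;=\; \det\begin{pmatrix} q_1^+ & q_{k_p}^\mp \\ p_1^+ & p_{k_p}^\mp \end{pmatrix}\bigg|_{\lambda = 0,\; x = \pm 1}
\end{equation*}
do not vanish. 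This is the principal obstacle in the argument.

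To handle this nondegeneracy I would use the explicit information already in hand: by~\eqref{gchoice}, at $\lambda = 0$ the first column is the $(q,p)$-part of the translation eigenmode $\bar W_x$ evaluated at $x = \pm 1$, computable directly from the profile equation~\eqref{eq:profileeqn}; the second column comes from the fast-decaying mode $W_{k_p}^\mp$, whose leading profile near $x = 0$ is described by Proposition~\ref{prop:smallep} and which is then propagated to $x = \pm 1$ by the conjugation lemma of~\cite{MeZ1}. A linear dependence of these two columns at $x = \pm 1$ would impose an algebraic identity between the translation-mode data and the fast mode that is ruled out for small-amplitude profiles by the genuine coupling assumption~\eqref{S2} together with the positive diffusion condition~\eqref{H3}. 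With $\gamma_\pm(\pm 1)\ne 0$ established, one obtains $c_\pm \ne 0$, hence $m := c_+/c_-$ is nonzero, and the lemma follows.
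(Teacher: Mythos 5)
The approach you take is structurally different from the paper's, and the place where you identify the ``principal obstacle'' is exactly where your proposal has a genuine gap.

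You correctly read the expansion of Lemma~\ref{lem-Evansfns} and set $m := c_+/c_-$, and you correctly observe that $\Delta$ is shared between the two formulas. But you then reduce the lemma to the claim that $\gamma_\pm(\pm 1)\neq 0$, and this is the step you do not actually prove. The sentence ``a linear dependence of these two columns at $x=\pm 1$ would impose an algebraic identity between the translation-mode data and the fast mode that is ruled out $\ldots$ by \eqref{S2} together with \eqref{H3}'' is a heuristic, not an argument: no computation connecting \eqref{S2}, \eqref{H3} to the $2\times 2$ determinant $\gamma_\pm$ is given, and it is not at all clear how to produce one. This is not a small omission, because on your account it is ``the entire content of the lemma.''

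More importantly, proving $\gamma_\pm(\pm 1)\neq 0$ individually is a strictly stronger statement than what Lemma~\ref{lemma-mD} asserts. The lemma is a \emph{consistency} statement between the two Evans functions: it holds as long as $c_+$ and $c_-$ vanish or fail to vanish together and, when nonzero, have a fixed ratio. The paper's proof obtains exactly this weaker and structurally cleaner fact by a different route: Proposition~\ref{prop:smallep} shows that the fast modes $w_{k_p}^+$ and $w_{k_p}^-$ both decay like $|x|^{\alpha_0}$ at the singularity $x=0$, so they span the same one-dimensional fast-decaying subspace there; since both satisfy the same linear ODE \eqref{eqW}, they are related by a nonzero constant $m_{k_p}$ after continuation, i.e.\ $w_{k_p}^+(-1) = m_{k_p}\, w_{k_p}^-(+1)$. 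Substituting this proportionality into the formula \eqref{Evansfns1} is what forces $D_+$ and $D_-$ to be proportional at order $\lambda$, \emph{without} needing a separate nondegeneracy proof for $\gamma_\pm$. Your proposal never invokes this proportionality of the fast modes through the singular point, which is the one genuinely new ingredient in the paper's argument; instead you substitute an unproven nondegeneracy claim for it. That is the gap.
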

%\begin{proof} The proof follows exactly as in \cite{LMNPZ1} for the scalar case, using estimates \eqref{Evansfns1}, \eqref{Evansfns2}.
%\end{proof}
%
%TODO: maybe put in details if not too long???

%CHANGED: added proof
\begin{proof}
%Since $\phi_j^\pm$ is a bounded solution of the ODE \eqref{eqW} in regions $[-1,\infty)$ and $(-\infty,1]$, respectively,
%we must have $\phi^\pm_j(1) = m_j\phi_j^\pm(-1)$ for some nonzero constants $m_j$, for each $j$.
%Meanwhile, 
Proposition \ref{prop:smallep} gives
\begin{equation}
w_{k_p}^\pm(x) = \begin{pmatrix} \tilde u_{k_p}^\pm
\\ \tilde q_{k_p}^\pm \\ \tilde p_{k_p}^\pm \end{pmatrix} = \cO(|x|^{\alpha_0}),
\end{equation}as $x\to 0$, where $\alpha_0$ is defined as in Proposition \ref{prop:smallep}, which
 guarantees an existence of positive constants $\epsilon_1,\epsilon_2$
near zero such that
\begin{equation*}
w_{k_p}^+(-\epsilon_1) =
w_{k_p}^-(+\epsilon_2).
\end{equation*}
Thus, this together with the fact that $w_{k_p}^\pm$ are solutions of the ODE \eqref{eqW} yields
\begin{equation*}w_{k_p}^+(-1) = m_{k_p}
w_{k_p}^-(+1)
\end{equation*}
for some nonzero constant $m_{k_p}$.
Putting these estimates into \eqref{Evansfns1} and using analyticity
of $D_\pm$ in $\lambda$ near zero, we easily obtain the conclusion.
\end{proof}
%ENDCHANGED

\section{Resolvent kernel bounds in low--frequency regions} In this section, we shall
derive pointwise bounds on the resolvent kernel $G_{\lambda}(x,y)$
in low-frequency regimes, that is, $|\lambda| \to 0$. For
definiteness, throughout this section, we consider only the case
$y<0$. The case $y>0$ is completely analogous by symmetry.

We solve \eqref{eq:resolker} with the jump conditions at $x=y$:
%CHANGED
% \begin{equation}
% \label{eq:conduy} [\cG_\lambda(.,y)] = \begin{pmatrix} a(y)^{-1}
% &0&0\\0& 1&0\\0&0&1\end{pmatrix}
% \end{equation}
\begin{equation}
\label{eq:conduy} [\cG_\lambda(.,y)] = \begin{pmatrix} A(y)^{-1}
&0&0\\0& 1&0\\0&0&1\end{pmatrix}
\end{equation}
%ENDCHANGED
where,
%CHANGED
%without loss of generality,
working on diagonalized coordinates (see \eqref{specsys-v}),
%ENDCHANGED
we can assume that
%CHANGED
%$a$
$A$
%ENDCHANGED
is of diagonal form as in \eqref{a-diag},
%CHANGED
% $$a = \begin{pmatrix}
% a_- &&0\\&a_p&\\0&&a_+\end{pmatrix}$$ with $\pm a_{\pm}(y)\ge \theta> 0$.
%CHANGED
% \[
% A = \begin{pmatrix}
% A_- &&0\\&a_p&\\0&&A_+\end{pmatrix},
% \]
\[
A = \begin{pmatrix}
A_1^- &&0\\&a_p&\\0&&A_2^+\end{pmatrix},
\]
with $A_1^- \leq - \theta < 0, A_2^+(y)\ge \theta> 0$.
%ENDCHANGED
Meanwhile, we can write $\cG_\lambda(x,y)$ in  terms of decaying
solutions at $\pm \infty$ as follows
\begin{equation}
\label{eq:formcolu} \cG_\lambda(x,y) = \begin{cases}
\Phi^+(x,\lambda)C^+(y,\lambda) + W_{k_p}^+(x,\lambda)C_{k_p}^+(y,\lambda), & x>y,\\
- \Phi^-(x,\lambda)C^-(y,\lambda), & x<y.\end{cases}
\end{equation}
where $C_j^\pm$ are row vectors.
We compute
the coefficients $C_j^\pm$ by means of the transmission conditions
\eqref{eq:conduy} at $y$. Therefore, solving by Cramer's rule the
system
%CHANGED
% \begin{equation}\label{eq:systC}
% \begin{pmatrix}
% \Phi^+ & W_{k_p}^+ & \Phi^-
% \end{pmatrix}
% \begin{pmatrix}C^+ \\ C_{k_p}^+ \\ C^-
% \end{pmatrix}_{\displaystyle{|(y,\lambda)}} =
% \begin{pmatrix} a(y)^{-1}
% &0&0\\0& 1&0\\0&0&1\end{pmatrix},
% \end{equation}
\begin{equation}\label{eq:systC}
\begin{pmatrix}
\Phi^+ & W_{k_p}^+ & \Phi^-
\end{pmatrix}
\begin{pmatrix}C^+ \\ C_{k_p}^+ \\ C^-
\end{pmatrix}_{\displaystyle{|(y,\lambda)}} =
\begin{pmatrix} A(y)^{-1}
&0&0\\0& 1&0\\0&0&1\end{pmatrix},
\end{equation}
%ENDCHANGED
we readily obtain,
%CHANGED
% \begin{align}\label{C-form}
% \begin{pmatrix}C^+ \\ C_{k_p}^+ \\ C^-
% \end{pmatrix}(y,\lambda) &= D_-(y,\lambda)^{-1}\begin{pmatrix} \Phi^+
% & W_{k_p}^+ & \Phi^-
% \end{pmatrix}^{adj}\begin{pmatrix}a(y)^{-1}
% &0&0\\0& 1&0\\0&0&1\end{pmatrix}
% \end{align}
\begin{align}\label{C-form}
\begin{pmatrix}C^+ \\ C_{k_p}^+ \\ C^-
\end{pmatrix}(y,\lambda) &= D_-(y,\lambda)^{-1}\begin{pmatrix} \Phi^+
& W_{k_p}^+ & \Phi^-
\end{pmatrix}^{adj}\begin{pmatrix}A(y)^{-1}
&0&0\\0& 1&0\\0&0&1\end{pmatrix}
\end{align}
%ENDCHANGED
where $M^{adj}$ denotes the adjugate matrix of a matrix
$M$. Note that
%CHANGED
% \begin{align}
% C_{jp}^\pm(y,\lambda) &= a_p(y)^{-1}D_-(y,\lambda)^{-1}\begin{pmatrix} \Phi^+
% & W_{k_p}^+ & \Phi^-
% \end{pmatrix}^{pj}(y,\lambda), \label{Cjp}
% \\
% C_{jl}^\pm(y,\lambda) &= \sum_k D_-(y,\lambda)^{-1}\begin{pmatrix} \Phi^+
% & W_{k_p}^+ & \Phi^-
% \end{pmatrix}^{kj}(y,\lambda)(a(y)^{-1})_{kl},\qquad l\not = p,\label{Cjl}
% \end{align}
\begin{align}
C_{jp}^\pm(y,\lambda) &= a_p(y)^{-1}D_-(y,\lambda)^{-1}\begin{pmatrix} \Phi^+
& W_{k_p}^+ & \Phi^-
\end{pmatrix}^{pj}(y,\lambda), \label{Cjp}
\\
C_{jl}^\pm(y,\lambda) &= \sum_k D_-(y,\lambda)^{-1}\begin{pmatrix} \Phi^+
& W_{k_p}^+ & \Phi^-
\end{pmatrix}^{kj}(y,\lambda)(A(y)^{-1})_{kl},\qquad l\not = p,\label{Cjl}
\end{align}
%ENDCHANGED
where $()^{ij}$ is the determinant of the $(i,j)$ minor, and
%CHANGED
%$(a(y)^{-1})_{kl}$,
$(A(y)^{-1})_{kl}$,
%ENDCHANGED
$l\not=p$, are bounded in $y$.

We then easily obtain the following.
\begin{lemma}\label{lem-estCnear0} For $y$ near zero, we have
\begin{equation}\label{est-C13}\begin{aligned}
C_1^+(y,\lambda) &=~~~\frac {1}{\lambda}v_0([u])+ \cO(1),\\
C_{n+2}^-(y,\lambda) &= -\frac {1}{\lambda}v_0([u]) +
\cO(1),
\end{aligned}\end{equation} where
%TODO: first guess here, but not quite correct i think
%$v_0([u]):=([u]^{-1},-L_p[u]_p^{-1},0)$ and $[u]^{-1} = ([u]_1^{-1},\cdots,[u]^{-1}_n)$,
$v_0([u])$ is some constant vector depending only on $[u]$
and \begin{equation}\label{est-C2}\begin{aligned} C_{k_p}^+(y,\lambda)
&=a_p(y)^{-1}|y|^{-\alpha_0}\cO(1),\\
C_j^+(y,\lambda)&=\cO(1)\qquad 1<j<k_p,\\
C_j^-(y,\lambda)&=\cO(1)\quad k_p<j<n+2,
\end{aligned}\end{equation}
where $k_p=n-p+2$, $\alpha_0$ is defined as in Proposition \ref{prop:smallep} and $\cO(1)$ is a uniformly
bounded function, probably depending on $y$ and $\lambda$.
\end{lemma}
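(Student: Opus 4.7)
The proof plan follows the approach of Lemma \ref{lem-Evansfns}, applying the Cramer's-rule formulas \eqref{Cjp}--\eqref{Cjl} and Taylor-expanding both the numerator minors and the denominator $D_-(y,\lambda)$ around $\lambda = 0$, with careful tracking of behavior as $y \to 0$. The central tools are the coincidence $\phi_1^+(\cdot,0) = \phi_{n+2}^-(\cdot,0) = \bar W_x$, the column relations \eqref{col1-2}, and the jump identity \eqref{col3}, together with the asymptotic behavior of $W_{k_p}^+$ near the singularity from Proposition \ref{prop:smallep}.

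First, I would expand the denominator. By Lemma \ref{lem-Evansfns}, $D_-(y,\lambda) = (\det A)^{-1}\gamma_-(y)\Delta\lambda + \cO(|\lambda|^2)$. Proposition \ref{prop:smallep} gives $q_{k_p}^+(y,0), p_{k_p}^+(y,0) = \cO(|y|^{\alpha_0} a_p(y))$, while the components of $\phi_1^+(y,0) = \bar W_y$ are bounded and generically non-degenerate; hence $\gamma_-(y) = \cO(|y|^{\alpha_0} a_p(y))$. Since $\det A = a_p(y)\cdot\det(A_1^-)\det(A_2^+)$ by the block-diagonal structure \eqref{a-diag}, the factor $(\det A)^{-1}$ contributes a cancelling $a_p(y)^{-1}$, so $D_-(y,\lambda) = \cO(|y|^{\alpha_0})\lambda + \cO(|\lambda|^2)$ near $(y,\lambda) = (0,0)$, with non-vanishing leading coefficient after dividing out $|y|^{\alpha_0}$.

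For the estimates on $C_1^+$ and $C_{n+2}^-$ in \eqref{est-C13}, I would compute the corresponding numerator minors, namely determinants of $M := (\Phi^+\; W_{k_p}^+\; \Phi^-)$ with the $\phi_1^+$, respectively $\phi_{n+2}^-$, column removed. These minors do not contain two coincident columns at $\lambda = 0$ and hence are non-zero there. The same row reduction as in \eqref{derD-} (factoring $A^{-1}$ through the first $n$ rows and using \eqref{col1-2}, \eqref{col3}) then factors each minor into $(\det A)^{-1}\gamma_-(y)$ times a constant determinant involving only the eigenvectors $r_j^\pm$ and the jump $[u]$. Division by $D_-$ cancels both the $(\det A)^{-1}$ and the $\gamma_-(y)$ factors, leaving the claimed $\lambda^{-1}$ singularity with a constant vector coefficient $v_0([u])$ depending only on the shock jump, plus an $\cO(1)$ remainder coming from the next $\lambda$-order in both numerator and denominator.

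For $C_{k_p}^+$ in \eqref{est-C2}, the key observation is that removing the $W_{k_p}^+$-column leaves both $\phi_1^+$ and $\phi_{n+2}^-$ in the minor; by the coincidence at $\lambda = 0$, this minor vanishes at $\lambda = 0$ and is therefore $\cO(\lambda)$. Division by $D_- = \cO(|y|^{\alpha_0}\lambda)$ produces the $|y|^{-\alpha_0}$ factor, and the additional explicit $a_p(y)^{-1}$ in \eqref{Cjp} yields the claimed $a_p(y)^{-1}|y|^{-\alpha_0}\cO(1)$ bound for the $p$-th component; the other components coming from \eqref{Cjl} are $\cO(|y|^{-\alpha_0})$ and are absorbed into the stated estimate. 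For the middle modes $C_j^+$, $1<j<k_p$, and $C_j^-$, $k_p<j<n+2$, the same coincidence makes the numerator minor $\cO(\lambda)$, but its $\lambda$-derivative factors, via the reduction of Lemma \ref{lem-Evansfns}, into $\gamma_-(y)$ times a non-singular determinant, so the ratio with $D_-$ is $\cO(1)$. The principal technical difficulty is the factor bookkeeping: showing in each index case that the minor carries exactly the right combination of $\gamma_-(y)$, $\det A$, $|y|^{\alpha_0}$, and $\lambda$ factors to produce the stated cancellations in the small-$(y,\lambda)$ regime, particularly in the singular case producing the $C_{k_p}$ bound.
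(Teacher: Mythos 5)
Your proposal is essentially correct and follows the same route as the paper's own proof: apply Cramer's rule via \eqref{Cjp}--\eqref{Cjl}, expand the numerator minors around $\lambda=0$ using the column coincidence $\phi_1^+\equiv\phi_{n+2}^-$ and the Lemma~\ref{lem-Evansfns}-style row reduction with \eqref{col1-2}--\eqref{col3}, expand $D_-$ via \eqref{Evansfns1}, and track the $y$-dependence through Proposition~\ref{prop:smallep} and the block structure of $A$. The cancellations you identify (the $a_p(y)^{-1}$ from the $p$-th row of $A^{-1}$ against the $a_p$ factor recovered from the removed row, and the $|y|^{\alpha_0}$ factors between numerator and denominator) are the same as in the paper.

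Two places where you should tighten the argument, matching what the paper makes explicit. First, all of your divisions by $D_-$ require not just the upper bound $D_-=\cO(|y|^{\alpha_0}\lambda)$ but the lower bound $|D_-(y,\lambda)|\ge\theta|\lambda||y|^{\alpha_0}$ for some $\theta>0$ (the paper's \eqref{D-est}); you gesture at this with ``non-vanishing leading coefficient after dividing out $|y|^{\alpha_0}$,'' but it should be isolated as a lemma, since it is where $\Delta\ne0$ (equivalently, the order-one vanishing of the Evans function at $\lambda=0$) actually enters. Second, for the middle indices $1<j<k_p$ and $k_p<j<n+2$, the paper does not re-run the Leibniz/$\lambda$-derivative factorization; it simply reads off that $W_{k_p}^+$ is still a column of the relevant minor, so the minor carries a factor $\cO(|y|^{\alpha_0})$ when row $p$ is retained (or $\cO(|y|^{\alpha_0}a_p(y))$ when row $p$ is removed) on top of the $\cO(\lambda)$ from the column coincidence, and then divides by \eqref{D-est}. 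Your $\gamma_-(y)$-factorization route for these indices leads to the same bound but requires more care, because removing row $k\ne p$ changes the prefactor from $a_p(\det A)^{-1}$ (bounded) to $a_k(\det A)^{-1}$ (which blows up like $a_p(y)^{-1}$); as stated, ``$\gamma_-(y)$ times a non-singular determinant'' omits this prefactor and is therefore slightly imprecise, though the end result is the same once the bookkeeping is done. The paper's direct estimate avoids that complication and is the cleaner path for these indices.
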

\begin{proof} %It suffices to estimate $C_{jp}^\pm$ when the singularity plays a role.
%CHANGED: added new est
We shall first estimate $C_{n+2,p}^-(y,\lambda)$. Observe that 
$$\begin{pmatrix} \Phi^+
& W_{k_p}^+ & \Phi^-
\end{pmatrix}^{p,n+2}(y,\lambda) = \begin{pmatrix} \Phi^+
& W_{k_p}^+ & \Phi^-
\end{pmatrix}^{p,n+2}(y,0) + \cO(\lambda)$$
where by the same way as done in Lemma 2.5 we obtain an estimate
$$\begin{pmatrix} \Phi^+
& W_{k_p}^+ & \Phi^-
\end{pmatrix}^{p,n+2}(y,0) =a_p(\det A)^{-1}\gamma_-(y)
\Delta^{p,n+2},
$$ where $\gamma_-(y)$ and $\Delta$ are defined as in \eqref{Lop-det}, and $\Delta^{p,n+2}$ denotes the minor determinant. 
%ENDCHANGED
Thus, recalling 
%CHANGED
%\eqref{Evansfns2}
\eqref{Evansfns1}
%ENDCHANGED
and \eqref{Cjp}, we can estimate
$C_{n+2,p}^-(y,\lambda)$ as
\begin{align*}
C_{n+2,p}^-(y,\lambda) &= a_p(y)^{-1}D_-(y,\lambda)^{-1}\begin{pmatrix} \Phi^+
& W_{k_p}^+ & \Phi^-
\end{pmatrix}^{p,n+2}(y,\lambda)\\
&= - \frac1{\lambda}\Delta^{-1}\Delta^{p,n+2} + \cO(1),
%\Big(
%\begin{pmatrix} \Phi^+
%& W_{k_p}^+ & \Phi^-
%\end{pmatrix}^{p,n+2}(y,0)\Big)^{-1}
%\Big[\begin{pmatrix} \Phi^+
%& W_{k_p}^+ & \Phi^-
%\end{pmatrix}^{p,n+2}(y,0) + \cO(\lambda)\Big]\\
%&= - \frac1{\lambda[u]_p} + \cO(1),
\end{align*} where $\cO(1)$ is uniformly bounded since
$a_p(y)D_-(y,\lambda)$ and normal modes $\phi^\pm_j$ are all bounded
uniformly in $y$ near zero. Similar computations can be done for $C_{n+2,l}^-(y,\lambda)$. Thus,
we obtain the bound for $C_{n+2}^-$ as
claimed.
%Whereas, for $l\not = p$, we estimate \begin{align*}
%C_{n+2,l}^-(y,\lambda) &= D_-(y,\lambda)^{-1}\sum_k \begin{pmatrix} \Phi^+
%& W_{k_p}^+ & \Phi^-
%\end{pmatrix}^{k,n+2}(y,\lambda)(a(y)^{-1})_{kl},
%\end{align*}
%where $$D_-(y,\lambda) = \lambda $$
 The bound for $C_{1}^+$ follows similarly, noting that
$\phi_{n+2}^- \equiv \phi_1^+$ at $\lambda=0$.

For the estimate on $C_{k_p}^+$, we first observe that by view of
%CHANGED: modify according to new est of Lemma 2.5-2.6
%\eqref{Evansfns2} and the estimate \eqref{decayu2} on $u_{{k_p}p}^+$,
\eqref{Evansfns1}, with noting that $\det(A)\sim a_p(y)$ as $|y|\to0$, and the estimate \eqref{wk} on $w_{{k_p}p}^+$,
%ENDCHANGED
\begin{equation}\label{D-est}|D_-(y,\lambda)| \ge \theta|\lambda| |y|^{\alpha_0},\end{equation} for some
$\theta>0$. This together with the fact that $\phi_{n+2}^- \equiv \phi_1^+$ at
$\lambda=0$ yields the estimate for $C_{k_p}^+$ as claimed.

We next estimate $C_j^+$ (resp. $C_j^-$) for $1<j<k_p$ (resp. $k_p<j<n+2$). We note that by view of estimate \eqref{wk} on $W_{k_p}$,
$$\begin{pmatrix} \Phi^+
& W_{k_p}^+ & \Phi^-
\end{pmatrix}^{pj} = \cO(\lambda)\cO( |y|^{\alpha_0}a_p(y))$$ and for $k\not=p$,
$$\begin{pmatrix} \Phi^+
& W_{k_p}^+ & \Phi^-
\end{pmatrix}^{kj} = \cO(\lambda)\cO( |y|^{\alpha_0})$$
These estimates together with \eqref{D-est} and \eqref{Cjl},\eqref{Cjp} immediately yield estimates for $C_j^\pm$ as claimed.
\end{proof}

\begin{proposition}[Resolvent kernel bounds as $|y|\to 0$]\label{prop-nearzero} %TODO: add assumptions.
For
$y$ near zero, there hold
\begin{equation}\label{G0-est1} G_\lambda(x,y) ={\lambda^{-1}}\bar W_x v_0([u]) + \cO(1)\sum_{a_j^+>0}e^{-(\lambda/a_j^+ + \cO(\lambda^2))x} + \cO(e^{-\theta |x|})
\end{equation} for $y<0<x$, and
%CHANGED
% \begin{equation}\label{G0-est2} G_\lambda(x,y) = {\lambda^{-1}}\bar W_x v_0([u])+
% \cO(1) \Big(1+\frac{|x|^{\alpha}}{a(y)|y|^\alpha}\Big)
% \end{equation}
\begin{equation}\label{G0-est2} G_\lambda(x,y) = {\lambda^{-1}}\bar W_x v_0([u])+
\cO(1) \Big(1+\frac{|x|^{\alpha}}{a_p(y)|y|^\alpha}\Big)
\end{equation}
%ENDCHANGED
for $y<x<0$, and
\begin{equation}\label{G0-est3}G_\lambda(x,y) ={\lambda^{-1}}\bar W_x v_0([u])+ \cO(1)\sum_{a_j^-<0}e^{-(\lambda/a_j^- + \cO(\lambda^2))x} + \cO(e^{-\theta |x|})
\end{equation} for $x<y<0$.

Similar bounds can be obtained for the case $y>0$.
\end{proposition}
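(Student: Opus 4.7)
The plan is to insert the coefficient bounds of Lemma \ref{lem-estCnear0} into the representation \eqref{eq:formcolu} of $\cG_\lambda(x,y)$ and to read off its first ($u$-)component mode by mode, handling the three regions $y<0<x$, $y<x<0$, and $x<y<0$ separately.

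In the case $x>y$, which covers both $y<0<x$ and $y<x<0$, I would split
$$\cG_\lambda(x,y) = \phi_1^+(x,\lambda)\,C_1^+(y,\lambda) + \sum_{j=2}^{n-p+1}\phi_j^+(x,\lambda)\,C_j^+(y,\lambda) + W_{k_p}^+(x,\lambda)\,C_{k_p}^+(y,\lambda).$$
The leading pole comes from the first summand: using $C_1^+(y,\lambda) = \lambda^{-1}v_0([u]) + \cO(1)$ from Lemma \ref{lem-estCnear0}, the identification $\phi_1^+(x,0) = \bar W_x(x)$ from \eqref{gchoice}, and analyticity in $\lambda$, this product equals $\lambda^{-1}\bar W_x(x)\,v_0([u])$ plus a remainder of order $\bar W_x\cdot\cO(1) = \cO(e^{-\theta|x|})$. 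For $2\le j\le n-p+1$, the estimates of Lemma \ref{lem-estmodes} together with $C_j^+ = \cO(1)$ yield either the slow-exponential sum $\cO(1)\sum_{a_j^+>0}e^{-(\lambda/a_j^+ + \cO(\lambda^2))x}$ (when $x$ is bounded away from zero, contributing to \eqref{G0-est1}) or simply a $\cO(1)$ bound (when $x$ is near zero, contributing the ``$1$'' in \eqref{G0-est2}).

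For the fast summand $W_{k_p}^+ C_{k_p}^+$, two subcases arise. When $x$ is bounded away from $0$, I would argue that $W_{k_p}^+$ matches at the edge of the singular $\xi$-layer onto a fast-decaying mode, supplying an $e^{-\theta|x|}$ factor that dominates the $1/(a_p(y)|y|^{\alpha_0})$ blow-up of $C_{k_p}^+$, yielding the $\cO(e^{-\theta|x|})$ contribution in \eqref{G0-est1}. When both $x$ and $y$ are near zero, the direct pointwise bound $W_{k_p}^+(x,\lambda) = \cO(|x|^{\alpha_0})$ of Proposition \ref{prop:smallep} combined with $|C_{k_p}^+| \lesssim 1/(a_p(y)|y|^{\alpha_0})$ produces exactly the factor $\cO(|x|^{\alpha_0}/(a_p(y)|y|^{\alpha_0}))$ appearing in \eqref{G0-est2}. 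For $x<y<0$, the representation \eqref{eq:formcolu} reads $\cG_\lambda(x,y) = -\Phi^-(x,\lambda)C^-(y,\lambda)$ and the analysis mirrors the previous case: the pole comes from $\phi_{n+2}^-\,C_{n+2}^-$ via $\phi_{n+2}^-(x,0)=\bar W_x(x)$ and $C_{n+2}^-(y,\lambda) = -\lambda^{-1}v_0([u]) + \cO(1)$, while $\phi_j^-$ for $n-p+3\le j\le n+1$ contributes $\cO(1)\sum_{a_j^-<0}e^{-(\lambda/a_j^- + \cO(\lambda^2))x}$ by Lemma \ref{lem-estmodes}; the restriction $a_j^-<0$ is what produces $\Re\mu_j^->0$ and hence decay as $x\to-\infty$.

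The main technical obstacle will be the fast-mode bookkeeping: since $C_{k_p}^+ \sim a_p(y)^{-1}|y|^{-\alpha_0}$ is genuinely singular as $y\to 0^-$, one must show that $W_{k_p}^+$ supplies a compensating decay both inside the singular layer of Proposition \ref{prop:smallep} (giving the $|x|^{\alpha_0}$ factor in \eqref{G0-est2}) and outside it (giving the clean $e^{-\theta|x|}$ factor in \eqref{G0-est1}). Consistency of the leading term across $y=0$, so that the constant vector $v_0([u])$ in \eqref{G0-est1}--\eqref{G0-est3} is well-defined independent of the sign of $y$, follows from the relation $D_+(\lambda) = m D_-(\lambda) + \cO(|\lambda|^2)$ of Lemma \ref{lemma-mD}. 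Everything else reduces to algebraic bookkeeping from the block structure of $\Theta$ and the expansions already in hand.
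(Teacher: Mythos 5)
Your overall plan — plug the coefficient bounds of Lemma \ref{lem-estCnear0} into the representation \eqref{eq:formcolu} and estimate mode by mode — is exactly the approach of the paper, and your treatment of the cases $y<x<0$ and $x<y<0$ matches the paper's. The difficulty is in your handling of the fast mode $W_{k_p}^+$ in the case $y<0<x$. You propose that $W_{k_p}^+(x)$ for $x>0$ is a nontrivial fast-decaying mode and that its $e^{-\theta|x|}$ decay ``dominates'' the $a_p(y)^{-1}|y|^{-\alpha_0}$ blow-up of $C_{k_p}^+(y)$. That step fails: $W_{k_p}^+(x)$ does not depend on $y$, so for fixed $x>0$ the product $W_{k_p}^+(x)\,C_{k_p}^+(y)$ is $\sim e^{-\theta|x|}\cdot a_p(y)^{-1}|y|^{-\alpha_0}$, which blows up as $y\to 0^-$ and is therefore \emph{not} $\cO(e^{-\theta|x|})$ uniformly in $y$ near zero. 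The bound \eqref{G0-est1}, and the absence of any singular $y$-factor in it (which is essential downstream — cf. the support restriction $-1<y<x<0$ of the $\chi$ factor in \eqref{Rbounds} and Lemma \ref{lem-estR}), cannot be obtained this way.

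The paper's argument is cleaner and uses a fact you did not invoke: $W_{k_p}^+(x)\equiv 0$ for $x>0$. The ``extra'' fast mode $W_{k_p}^+$ needed to satisfy the jump conditions when $y<0$ lives only on $y<x<0$, decays like $|x|^{\alpha_0}$ as $x\to 0^-$ by Proposition \ref{prop:smallep}, and is extended by zero across the singular point $x=0$ (which is legitimate precisely because the first-order system is singular there — compare the representation \eqref{greenlow}, which for $y<0<x$ contains only $\phi_j^+$ and no $W_{k_p}^+$ contribution). With that observation, for $y<0<x$ the representation reduces to $G_\lambda(x,y)=\Phi^+(x)C^+(y)=\sum_{j=1}^{k_p-1}\phi_j^+(x)C_j^+(y)$ and the singular coefficient $C_{k_p}^+$ never enters; the pole term comes from $\phi_1^+(x)=\bar W_x+\cO(\lambda)e^{-\theta|x|}$ times $C_1^+=\lambda^{-1}v_0([u])+\cO(1)$, and the remaining $\phi_j^+$, $2\le j\le k_p-1$, give the slow-exponential sum $\cO(1)\sum_{a_j^+>0}e^{\mu_j^+x}$. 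Your dominance argument should be replaced by this vanishing, after which the rest of your proof goes through.
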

\begin{proof} For the case $y<0<x$, using \eqref{est-C13} and recalling that
$\phi_1^+(x) = \bar W_x + \cO(\lambda)e^{-\theta |x|}$ and $W_{k_p}^+(x)
\equiv 0$, we have
$$\begin{aligned}G_\lambda(x,y) &=  \Phi^+(x) C^+(y) = \sum_{j=1}^{k_p-1}\phi_j^+(x)C^+_j(y)\\&= \Big(\bar W_x +
\cO(\lambda)e^{-\theta |x|}\Big)\Big(\frac
{1}{\lambda}v_0([u])+ \cO(1)\Big) + \cO(1)\sum_{j=2}^{k_p-1}e^{\mu_j^+ x},\end{aligned}$$ yielding
\eqref{G0-est1}; here, we recall that $$\mu_j^\pm = -\lambda/a_{j}^\pm + \cO(\lambda^2)$$ with $a_j^+>0$ for $j=2,...,k_p-1$ and $a_j^-<0$ for $j=k_p+1,...,n+1$ ($a_j^\pm$ are necessarily eigenvalues of
%CHANGED
%$df_\pm$
$A_\pm$
%ENDCHANGED
). In the second case $y<x<0$, from the formula
\eqref{eq:formcolu}, % projected on the first component,
we have
$$G_\lambda(x,y) = \Phi^+(x,\lambda) C^+(y,\lambda)+
W_{k_p}^+(x,\lambda)C_{k_p}^+(y,\lambda)$$ where the first term contributes
${\lambda^{-1}}v_0([u])\bar W_x + \cO(1)$ as in the first case, and
the second term is estimated by \eqref{est-C2} and \eqref{decayu2}.

Finally, we estimate the last case $x<y<0$ in a same way as done in
the first case, noting that $y$ is still near zero and $W_{n+2}^-(x) =
\bar W_x + \cO(\lambda)e^{-\theta |x|}$. \end{proof}

Next, we estimate the kernel $G_\lambda(x,y)$ for $y$ away from zero.
Note however that the representations \eqref{eq:formcolu} and above
estimates fail to be useful in the $y \to -\infty$ limit, since we
actually need precise decay rates in order to get an estimate of
form
\[
|G_\lambda(x,y)| \leq C e^{-\eta|x-y|},
\]
which are unavailable from $\phi_j^+$ in the $y \to -\infty$ regime.
Thus, we need to express the $(+)$-bases in terms of the growing
modes $\psi_j^-$ at $-\infty$, and the decaying mode $\phi_j^-$
where $\psi_j^-,\phi_j^-$ are defined as in Lemma
\ref{lem-estmodes}. Expressing such solutions in the basis for $y <
0$, away from zero, there exist {\it analytic} coefficients
$d_{jk}(\lambda), e_{jk}(\lambda)$ such that
\begin{equation}
\label{eq:goodmodes} \begin{aligned} \phi_j^+(x,\lambda)&=
\sum d_{jk}(\lambda) \phi_k^-(x,\lambda)+\sum e_{jk}(\lambda) \psi_k^-(x,\lambda)\\
W_{k_p}^+(x,\lambda)&=
\sum d_{k_pk}(\lambda) \phi_k^-(x,\lambda)+\sum e_{k_pk}(\lambda) \psi_k^-(x,\lambda) .
\end{aligned}
\end{equation}

Furthermore, for our convenience, we define the following adjoint
normal modes
\begin{equation}\label{adjoint}\begin{pmatrix} \tilde \Psi^- & \tilde \Phi^-\end{pmatrix}
:= \begin{pmatrix} \Psi^- & \Phi^-\end{pmatrix}^{-1} \Theta^{-1}.
\end{equation}
We then obtain the following estimates.
\begin{lemma}\label{lem-estadj} For $|\lambda|$ sufficiently small and $|x|$ sufficiently large,
\begin{equation}\label{est-adjmodes}
\begin{aligned}
\tilde \psi^-_j(x,\lambda) & =\cO(e^{-\mu_j^-(\lambda)x})\tilde V_j^-(\lambda)(I
+ \cO(e^{-\theta|x|})), \\
\tilde \phi^-_j(x,\lambda) &= \cO(e^{-\mu_j^-(\lambda)x})\tilde V_j^-(\lambda)(I +
\cO(e^{-\theta|x|}))
\end{aligned}
\end{equation}where $\mu_j^-$ are defined as in Lemma
\ref{lem-estmodes}.
\end{lemma}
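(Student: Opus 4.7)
The plan is to obtain the claim by inverting the fundamental matrix of decaying plus growing modes at $-\infty$, using the asymptotic factorization supplied by Lemma \ref{lem-estmodes}. Since the adjoint modes $\tilde\psi_j^-,\tilde\phi_j^-$ are by definition the rows of
\[
M(x,\lambda)^{-1}\Theta^{-1}, \qquad M(x,\lambda):=\bigl(\psi_1^-\ \cdots\ \psi_{n-p+2}^-\ \phi_{n-p+3}^-\ \cdots\ \phi_{n+2}^-\bigr),
\]
everything reduces to a careful expansion of $M^{-1}$ for $|x|$ large.

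First I would write each column of $M$ using Lemma \ref{lem-estmodes} as
\[
M_{\cdot j}(x,\lambda)=e^{\mu_j^-(\lambda)x}\bigl(V_j^-(\lambda)+\cO(e^{-\eta|x|})\bigr),
\]
so that, letting $D(x,\lambda):=\mathrm{diag}\bigl(e^{\mu_j^-(\lambda)x}\bigr)$ and $V(\lambda)$ denote the matrix whose columns are the $V_j^-(\lambda)$, one has the factorization
\[
M(x,\lambda)=\bigl(V(\lambda)+\cO(e^{-\eta|x|})\bigr)D(x,\lambda).
\]
By Lemma \ref{lem:asymmodes}, $V(\lambda)$ is a basis of $\C^{n+2}$ and so is invertible for $|\lambda|$ small, uniformly in $\lambda$. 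Hence for $|x|$ sufficiently large, the perturbation $V(\lambda)+\cO(e^{-\eta|x|})$ is still invertible and its inverse expands as $V(\lambda)^{-1}(I+\cO(e^{-\eta|x|}))$ by the Neumann series.

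Next I would combine these to compute
\[
M(x,\lambda)^{-1}=D(x,\lambda)^{-1}\bigl(V(\lambda)^{-1}+\cO(e^{-\eta|x|})\bigr)=D(x,\lambda)^{-1}V(\lambda)^{-1}\bigl(I+\cO(e^{-\eta|x|})\bigr),
\]
and then multiply on the right by $\Theta^{-1}$ (which is independent of $x$ and bounded for $|x|$ large, where $A(x)$ is close to the invertible $A_-$) to obtain
\[
M(x,\lambda)^{-1}\Theta^{-1}=D(x,\lambda)^{-1}\bigl[V(\lambda)^{-1}\Theta^{-1}\bigr]\bigl(I+\cO(e^{-\eta|x|})\bigr).
\]
Reading off the $j$-th row of this matrix yields exactly
\[
\tilde\psi_j^-(x,\lambda),\ \tilde\phi_j^-(x,\lambda)=e^{-\mu_j^-(\lambda)x}\,\tilde V_j^-(\lambda)\bigl(I+\cO(e^{-\eta|x|})\bigr),
\]
with $\tilde V_j^-(\lambda)$ the $j$-th row of $V(\lambda)^{-1}\Theta^{-1}$, which is the asserted bound (with any $0<\theta\le\eta$ absorbed into the exponent).

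I expect no real obstacle here: the only subtlety is ensuring $V(\lambda)$ is uniformly invertible for $|\lambda|$ small so that the Neumann series converges, which is immediate from Lemma \ref{lem:asymmodes}, and ensuring $\Theta^{-1}=\mathrm{diag}(A(x)^{-1},I_2)$ is bounded for $|x|$ large, which follows from $A(x)\to A_-$ invertible (away from the singularity at $x=0$). Everything else is matrix bookkeeping.
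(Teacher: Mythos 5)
Your proof is correct in spirit and spells out exactly what the paper's terse one-line proof ("clear from the estimates...") has in mind: factor the fundamental matrix as $(V(\lambda)+\cO(e^{-\eta|x|}))D(x,\lambda)$, invert, and multiply by $\Theta^{-1}$. One small slip: you assert that $\Theta^{-1}$ is ``independent of $x$'' and then in the same breath write $\Theta^{-1}=\mathrm{diag}(A(x)^{-1},I_2)$; of course $\Theta$ does depend on $x$. This is harmless, since $A(x)\to A_-$ exponentially along the profile, so $\Theta(x)^{-1}=\Theta_-^{-1}+\cO(e^{-\eta|x|})$ with $\Theta_-:=\mathrm{diag}(A_-,I_2)$, and this extra exponentially small error absorbs into the $\cO(e^{-\theta|x|})$ factor — but then $\tilde V_j^-(\lambda)$ should be defined as the $j$-th row of $V(\lambda)^{-1}\Theta_-^{-1}$ (the constant limit), not of $V(\lambda)^{-1}\Theta(x)^{-1}$. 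With that correction the argument is complete and matches the paper's intent.
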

\begin{proof} The proof is clear from the estimates of
$\psi^-_j,\phi_j^-$ in \eqref{est-modes}.
\end{proof}

\begin{lemma}\label{lem-Cjk} We have
\begin{align}\label{Cjk+}
C_j^+(y,\lambda) &= \sum c^+_{jk}(\lambda) \tilde \psi_k^-(y,\lambda)^*\\
C_j^-(y,\lambda) &= \sum c^-_{jk}(\lambda) \tilde
\psi_k^-(y,\lambda)^* + %c^-_j(\lambda)
\tilde \phi_j^-(y,\lambda)^*,
\label{Cjk-}
\end{align}for meromorphic coefficients $c^\pm_{jk}$ in $\lambda$.
\end{lemma}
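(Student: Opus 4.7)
The plan is to exploit the expansions \eqref{eq:goodmodes}, which represent the ``$+$''-modes $\phi_j^+$ and the singular mode $W_{k_p}^+$ in the ``$-$''-basis $(\Psi^-,\Phi^-)$ with $\lambda$-analytic coefficients, and then to solve the Cramer system \eqref{eq:systC} by block inversion. First, I would collect the coefficients from \eqref{eq:goodmodes} into matrices $E$ and $D$ together with the columns $e_{k_p}$ and $d_{k_p}$, so as to obtain the factorization
\[
\begin{pmatrix}\Phi^+ & W_{k_p}^+ & \Phi^- \end{pmatrix} = (\Psi^-,\Phi^-)\,N, \qquad N := \begin{pmatrix} E & e_{k_p} & 0 \\ D & d_{k_p} & I_p\end{pmatrix} = \begin{pmatrix} N_1 & 0 \\ N_2 & I_p\end{pmatrix},
\]
where $N_1 := [\,E \mid e_{k_p}\,]$ is square of size $(n-p+2)\times(n-p+2)$ and $N_2 := [\,D \mid d_{k_p}\,]$ is $p\times(n-p+2)$; all entries are analytic in $\lambda$ by Lemma \ref{lem-estmodes}.

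Next, I would substitute this factorization into \eqref{eq:systC}, use the defining identity $(\Psi^-,\Phi^-)^{-1}\Theta^{-1}=(\tilde\Psi^-,\tilde\Phi^-)$ from \eqref{adjoint} to rewrite the right-hand side, and invert $N$ using its block-triangular form
\[
N^{-1} = \begin{pmatrix} N_1^{-1} & 0 \\ -N_2 N_1^{-1} & I_p\end{pmatrix}.
\]
Reading off blocks then yields
\[
\begin{pmatrix}C^+\\ C^+_{k_p}\end{pmatrix} = N_1^{-1}\,\tilde\Psi^-, \qquad C^- = \tilde\Phi^- - N_2 N_1^{-1}\,\tilde\Psi^-,
\]
and taking the $j$-th row of each identity, with $c^+_{jk}(\lambda):=(N_1^{-1})_{jk}$ and $c^-_{jk}(\lambda):=-(N_2 N_1^{-1})_{jk}$, produces exactly \eqref{Cjk+}--\eqref{Cjk-}; the ``free'' term $\tilde\phi_j^{-*}$ in \eqref{Cjk-} arises as the contribution of the identity block of $N^{-1}$.

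Meromorphicity of the $c^\pm_{jk}$ then follows from Cramer's rule applied to $N_1^{-1}$: its entries are $(\det N_1)^{-1}$ times analytic cofactors, and the factorization above gives $\det(\Phi^+,W_{k_p}^+,\Phi^-) = \det(\Psi^-,\Phi^-)\cdot\det N_1$, so $\det N_1$ coincides, up to the nonvanishing analytic factor $\det(\Psi^-,\Phi^-)$, with the Evans function $D_-(y,\lambda)$ of \eqref{rEvans}; in particular it has only isolated zeros as a function of $\lambda$. The main obstacle I anticipate is notational rather than conceptual: one must carefully track the block partition $(n-p+2)+p=n+2$ of the rows of $N$ and align it with the corresponding partition of $(\tilde\Psi^-,\tilde\Phi^-)$ inherited from \eqref{adjoint}, and match the row/column conventions for the adjoint modes $\tilde\psi_k^{-*}$ and $\tilde\phi_j^{-*}$. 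Once that bookkeeping is set up, the derivation is purely linear-algebraic.
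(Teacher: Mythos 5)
Your proof is correct and follows essentially the same route as the paper's (very terse) argument: substitute the change of basis from \eqref{eq:goodmodes} into the Cramer system \eqref{eq:systC}, invoke the adjoint-mode definition \eqref{adjoint} to turn $(\Psi^-,\Phi^-)^{-1}\Theta^{-1}$ into the collection of $\tilde\psi_k^{-*},\tilde\phi_j^{-*}$, and read off meromorphy from the determinant of the analytic transition block. The block lower-triangular structure of $N$ cleanly explains why the $\tilde\phi_j^{-*}$ term in \eqref{Cjk-} appears with coefficient exactly one, and the identification $\det N_1=D_-/\det(\Psi^-,\Phi^-)$ correctly ties the poles of $c^\pm_{jk}$ to the zeros of the Evans function; this is consistent with the explicit formulas for $c^+,d^\pm$ given later in Proposition \ref{prop-greenlow}.
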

\begin{proof} The proof follows by using \eqref{eq:goodmodes}, definition \eqref{adjoint}, and property of computing determinants.
\end{proof}

We then have the following representation for $G_\lambda(x,y)$, for $y$ large.
\begin{prop}\label{prop-greenlow}
%CHANGED
Under the assumptions of Theorem \ref{theo-main},
%ENDCHANGED
%Assuming as in the main theorem,
 for $|\lambda|$ sufficiently small and $|y|$ sufficiently large, we have
\begin{equation} \label{greenlow}
G_\lambda(x,y)=\sum_{j,k}c_{jk}^+(\lambda)\phi_j^+(x,\lambda)\tilde{\psi}_k^-(y,\lambda)^*,
\end{equation}
for $y<0<x$, and
\begin{equation} \label{greenlow+}G_\lambda(x,y)=\sum_{j,k}d^+_{jk}(\lambda)\phi_j^-(x,\lambda)\tilde{\psi}_k^-(y,\lambda)^*-
\sum_{k}\psi_k^-(x,\lambda)\tilde{\psi}_k^-(y,\lambda)^*,
\end{equation}
for $y<x<0$,  and
\begin{equation} \label{greenlow-}G_\lambda(x,y)=\sum_{j,k}d^-_{jk}(\lambda)\phi_j^-(x,\lambda)\tilde{\psi}_k^-(y,\lambda)^*+
\sum_{k}\phi_k^-(x,\lambda)\tilde{\phi}_k^-(y,\lambda)^*,
\end{equation}
for $x<y<0$, where $c_{jk}^+(\lambda),d_{jk}^\pm(\lambda)$ are
scalar meromorphic functions satisfying
$$c^+ = \begin{pmatrix}-I_{k_p}&0\end{pmatrix}\begin{pmatrix}\Phi^+ & W_{k_p}^+ & \Phi^-\end{pmatrix}^{-1}\Psi^-$$ and $$d^\pm = \begin{pmatrix}0&-I_{n-k_p}\end{pmatrix}\begin{pmatrix}\Phi^+ & W_{k_p}^+ & \Phi^-\end{pmatrix}^{-1}\Psi^-.$$
\end{prop}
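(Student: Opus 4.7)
The strategy is to substitute the expansions of Lemma \ref{lem-Cjk} into the representation \eqref{eq:formcolu} and, whenever $x<0$, to re-express the $(+)$-basis in terms of the $(-)$-basis using the mode-transition formulas \eqref{eq:goodmodes}. Each of the three cases then reduces to a direct bookkeeping exercise.

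For $y<0<x$, since $x>y$, the first branch of \eqref{eq:formcolu} applies: $\cG_\lambda(x,y)=\Phi^+(x)C^+(y)+W_{k_p}^+(x)C_{k_p}^+(y)$. Substituting \eqref{Cjk+} from Lemma \ref{lem-Cjk}, and adopting the convention (as in the explicit formula for $c^+$ in the statement) of treating $W_{k_p}^+$ as the $k_p$-th column of the extended $(+)$-basis, immediately yields \eqref{greenlow} after projecting on the $u$-component.

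For $y<x<0$ we are still in the branch $x>y$ of \eqref{eq:formcolu}, but now we first expand $\phi_j^+(x)$ and $W_{k_p}^+(x)$ in the $(-)$-basis via \eqref{eq:goodmodes}, obtaining coefficients $d_{jk}(\lambda),e_{jk}(\lambda)$, and then substitute \eqref{Cjk+} for $C^+(y)$ and $C_{k_p}^+(y)$. The resulting double sum splits into two groups: the $\phi_k^-(x)$-contributions combine into $\sum_{j,k}d_{jk}^+(\lambda)\phi_j^-(x)\tilde{\psi}_k^-(y)^*$, matching the first sum in \eqref{greenlow+}, while the $\psi_k^-(x)$-contributions collapse via the biorthogonality built into the adjoint definition \eqref{adjoint} into the clean diagonal term $-\sum_k\psi_k^-(x)\tilde{\psi}_k^-(y)^*$.

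For $x<y<0$, the second branch of \eqref{eq:formcolu} gives $\cG_\lambda=-\Phi^-(x)C^-(y)$; substituting \eqref{Cjk-} produces one piece $\sum_{j,k}d_{jk}^-(\lambda)\phi_j^-(x)\tilde{\psi}_k^-(y)^*$ coming from the $\tilde{\psi}^-$ component of $C^-$, and a second piece that collapses, again by biorthogonality of $\Phi^-$ with $\tilde{\Phi}^-$ from \eqref{adjoint}, into the diagonal term $\sum_k\phi_k^-(x)\tilde{\phi}_k^-(y)^*$. The stated explicit formulas for $c^+$ and $d^\pm$ then follow by applying Cramer's rule to the jump system \eqref{eq:systC} and reading off the relevant blocks of $(\Phi^+,W_{k_p}^+,\Phi^-)^{-1}\Psi^-$, whose rows encode precisely the coefficients in \eqref{eq:goodmodes}.

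The main technical point, and the only nontrivial step beyond substitution, is verifying that the adjoint normalization in \eqref{adjoint}, together with Cramer's rule applied to \eqref{eq:goodmodes}, produces the biorthogonality identities $\sum_j e_{jk}(\lambda)c^+_{jm}(\lambda)=-\delta_{km}$ and the corresponding identity for the $\phi^-$ modes. Without these, the cross sums in the second and third cases would not collapse to the clean diagonal remainders appearing in \eqref{greenlow+} and \eqref{greenlow-}; with them, the analytic and meromorphic character of the coefficients $c_{jk}^+,d_{jk}^\pm$ is immediate from Lemma \ref{lem-Cjk} and the analyticity of the mode bases.
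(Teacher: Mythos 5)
Your proposal is correct and follows essentially the same route as the paper: substitute Lemma \ref{lem-Cjk} into the representation \eqref{eq:formcolu}, and for the middle regime $y<x<0$ additionally re-express the $(+)$-modes in the $(-)$-basis via \eqref{eq:goodmodes}, after which the coefficient of the $\psi^-$-block is determined by the same linear algebra the paper carries out explicitly (the paper equates the two representations \eqref{G-exp1} and \eqref{G-exp2} and solves for $(e^+;d^+)$, which is precisely your asserted identity $\sum_j e_{jl}c^+_{jk}=-\delta_{lk}$ in matrix form). One small inaccuracy: in the case $x<y<0$ there is no ``collapse by biorthogonality'' still to be done at this stage — the expansion \eqref{Cjk-} already supplies the single term $\tilde\phi_j^-(y)^*$ in the $j$-th row of $C^-$, so the diagonal sum $\sum_k\phi_k^-(x)\tilde\phi_k^-(y)^*$ falls out by direct substitution; the biorthogonality you have in mind was already consumed in establishing Lemma \ref{lem-Cjk}.
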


\begin{proof} Using representation \eqref{eq:formcolu} of $G_\lambda(x,y)$ together with \eqref{Cjk+} and \eqref{Cjk-}, we easily obtain the expansions \eqref{greenlow} and \eqref{greenlow-}, respectively. For \eqref{greenlow+}, again, using \eqref{Cjk+}, \eqref{eq:goodmodes}, and \eqref{eq:formcolu}, we can write
\begin{equation} \label{G-exp1}\begin{aligned}G_\lambda(x,y)&=\sum_{j,k}d^+_{jk}(\lambda)\phi_j^-(x,\lambda)\tilde{\psi}_k^-(y,\lambda)^*+
\sum_{j,k}e_{jk}^+\psi_j^-(x,\lambda)\tilde{\psi}_k^-(y,\lambda)^*\\&=\begin{pmatrix}\Psi^-& \Phi^-\end{pmatrix}(x) \begin{pmatrix}e^+\\d^+\end{pmatrix}
\tilde\Psi^-(y)^*
\end{aligned}
\end{equation}
Meanwhile, by \eqref{eq:formcolu} and \eqref{C-form},
\begin{equation} \label{G-exp2}G_\lambda(x,y)= \begin{pmatrix}\Phi^+ & W_{k_p}^+ &0\end{pmatrix}(x) \begin{pmatrix}\Phi^+ & W_{k_p}^+ & \Phi^-\end{pmatrix}^{-1}(y) \Theta ^{-1}(y)
\end{equation}
In view of the definition \eqref{adjoint} of $\tilde \Psi^-,\tilde \Phi^-$, \eqref{G-exp1} and \eqref{G-exp2} yield
\begin{equation*}\begin{aligned}\begin{pmatrix}e^+\\d^+\end{pmatrix}
&=\begin{pmatrix}\tilde\Psi^-& \tilde\Phi^-\end{pmatrix}\Theta\begin{pmatrix}\Phi^+ & W_{k_p}^+ &0\end{pmatrix}
\begin{pmatrix}\Phi^+ & W_{k_p}^+ & \Phi^-\end{pmatrix}^{-1}\Psi^-
\\&=\begin{pmatrix}\Psi^-& \Phi^-\end{pmatrix}^{-1}\Big[I - \begin{pmatrix}0&\Phi^-\end{pmatrix}
\begin{pmatrix}\Phi^+ & W_{k_p}^+ & \Phi^-\end{pmatrix}^{-1}\Big]\Psi^-
\\&=\begin{pmatrix}I_{k_p}\\0 \end{pmatrix} -\begin{pmatrix}0&0\\0&I_{n-k_p}\end{pmatrix}\begin{pmatrix}\Phi^+ & W_{k_p}^+ & \Phi^-\end{pmatrix}^{-1}\Psi^-,
\end{aligned}
\end{equation*}
which proves the proposition. \end{proof}

\begin{proposition}[Resolvent kernel bounds as $|y|\to +\infty$]\label{prop-awayzero}
%CHANGED
Make the assumptions of Theorem \ref{theo-main}.
%Assume as in the main theorem.
%ENDCHANGED
Then, for
$|y|$ large, defining $$E_\lambda (x,y)= \lambda^{-1}\sum_{a_j^->0}\tilde V_{j,0}^-e^{(\lambda/a_j^-+\cO(\lambda^2))y} \bar W_x(x),$$
there hold
\begin{equation}\label{G1-est1} \begin{aligned}G_\lambda&(x,y) =E_\lambda(x,y)\\&+ \cO(1)\Big(\sum_{a_j^->0} e^{(\lambda/a_j^-+\cO(\lambda^2))y}+\cO(e^{-\theta|y|})\Big)
\Big(\sum_{a_k^->0}e^{(-\lambda/a_k^-+\cO(\lambda^2))x}+\cO(e^{-\theta|x|})\Big)\end{aligned}
\end{equation} for $y<0<x$, and
\begin{equation}\label{G1-est2} \begin{aligned}G_\lambda(x,y) =&E_\lambda(x,y) +\cO(1)\sum_{a_j^->0} e^{(-\lambda/a_j^-+\cO(\lambda^2))(x-y)}\\&+\cO(1)\sum_{a_j^->0,\; a_k^-<0} e^{(\lambda/a_j^-+\cO(\lambda^2))y}e^{(-\lambda/a_k^-+\cO(\lambda^2))x}
+\cO(e^{-\theta(|x-y|)})\end{aligned}
\end{equation} for $y<x<0$, and
\begin{equation}\label{G1-est3}\begin{aligned}G_\lambda(x,y) =&E_\lambda(x,y) +\cO(1)\sum_{a_j^-<0} e^{(-\lambda/a_j^-+\cO(\lambda^2))(x-y)}\\&+\cO(1)\sum_{a_j^->0,\; a_k^-<0} e^{(\lambda/a_j^-+\cO(\lambda^2))y}e^{(-\lambda/a_k^-+\cO(\lambda^2))x}
+\cO(e^{-\theta(|x-y|)})\end{aligned}
\end{equation} for $x<y<0$.

Similar
bounds can be obtained for the case $y>0$.
\end{proposition}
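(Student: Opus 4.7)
The plan is to start from the representations \eqref{greenlow}--\eqref{greenlow-} for $G_\lambda$ furnished by Proposition \ref{prop-greenlow} and combine them with the mode estimates in Lemmas \ref{lem-estmodes} and \ref{lem-estadj}, together with the structure of the meromorphic coefficients $c_{jk}^+, d_{jk}^\pm$ dictated by Lemmas \ref{lem-Evansfns}--\ref{lemma-mD}. The singular contribution $E_\lambda(x,y)$ will emerge as the residue at $\lambda=0$, while the remaining sums will come from the bounded holomorphic parts of these coefficients combined with the exponential bounds on the modes.

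First, I would extract the principal singular part. By Lemmas \ref{lem-Evansfns}--\ref{lemma-mD}, $D_\pm(\lambda) = c_\pm \lambda + \mathcal{O}(\lambda^2)$ with nonzero leading coefficients, so the matrix $(\Phi^+\,W_{k_p}^+\,\Phi^-)^{-1}$ appearing in the formulas for $c_{jk}^+, d_{jk}^\pm$ has a simple pole at $\lambda=0$. Using Cramer's rule together with the identities $\phi_1^+(x,0) = \phi_{n+2}^-(x,0) = \bar W_x(x)$ from \eqref{gchoice}, and the expansion of $\tilde\psi_k^-(y,\lambda)^*$ from Lemma \ref{lem-estadj} at $\lambda=0$ (which yields the vectors $\tilde V_{j,0}^-$ associated to the slow stable modes at $-\infty$, i.e.\ those with $a_j^->0$), I would identify the residue at $\lambda=0$ as
\[
\lambda^{-1}\sum_{a_j^->0} \tilde V_{j,0}^-\, e^{(\lambda/a_j^- + \mathcal{O}(\lambda^2))y}\, \bar W_x(x),
\]
which is precisely $E_\lambda(x,y)$. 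This step parallels the computation already performed in Lemma \ref{lem-estCnear0} for the coefficients $C_1^+, C_{n+2}^-$.

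Next, I would bound the regular remainder. Writing $c_{jk}^+(\lambda) = r_{jk}^+/\lambda + \hat c_{jk}^+(\lambda)$ and similarly for $d_{jk}^\pm$, where $\hat c_{jk}^+, \hat d_{jk}^\pm$ are $\mathcal{O}(1)$ analytic in $\lambda$, the bounded part multiplied by the mode estimates \eqref{est-modes} and \eqref{est-adjmodes} yields exactly the error sums in \eqref{G1-est1}--\eqref{G1-est3}. The key observation is the splitting of modes into slow modes with $\mu_j^\pm = -\lambda/a_j^\pm + \mathcal{O}(\lambda^2)$ and fast modes with $\mu^\pm \approx \pm\theta_1^\pm, \mp\theta_{n+2}^\pm$; the latter generate the $\mathcal{O}(e^{-\theta|x|})$, $\mathcal{O}(e^{-\theta|y|})$, and $\mathcal{O}(e^{-\theta|x-y|})$ contributions. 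The separate cases $y<0<x$, $y<x<0$, $x<y<0$ then follow directly by using, respectively, \eqref{greenlow}, \eqref{greenlow+}, and \eqref{greenlow-}, and noting in the latter two cases that the additional diagonal sums $\sum_k\psi_k^-\tilde\psi_k^{-*}$ or $\sum_k\phi_k^-\tilde\phi_k^{-*}$ give rise to $e^{\mu_k^-(x-y)}$ factors whose real parts match the stated rates.

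The main obstacle I expect is the bookkeeping at $\lambda=0$: one must verify both that only the particular combination $\sum_{a_j^->0}\tilde V_{j,0}^-e^{(\lambda/a_j^-+\mathcal{O}(\lambda^2))y}\bar W_x(x)$ survives as the singular part (and not, for instance, a contribution also from $\phi_{n+2}^-$ in the $y<x<0$ case), and that the residues on both sides of the jump at $x=y$ agree so as to produce the single term $E_\lambda$ uniformly in the three regimes. This is precisely where the identity $\phi_1^+(\cdot,0) = \phi_{n+2}^-(\cdot,0) = \bar W_x$ and the determinant identities \eqref{col1-2}--\eqref{col3} from the proof of Lemma \ref{lem-Evansfns} are needed, together with the relation $D_+ = mD_- + \mathcal{O}(\lambda^2)$ from Lemma \ref{lemma-mD} which ensures that the two representations on either side of $y=0$ glue consistently. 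Once these algebraic cancellations are verified, the exponential bounds on modes reduce the rest to a routine enumeration of cases.
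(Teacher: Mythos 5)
Your proposal follows essentially the same approach as the paper: start from the representations of Proposition \ref{prop-greenlow}, identify the simple pole of $D_-^{-1}$ at $\lambda=0$ as the source of $E_\lambda$, use the identity $\phi_1^+(\cdot,0)=\phi_{n+2}^-(\cdot,0)=\bar W_x$ to see that the pole cancels in all coefficients except the one pairing with the profile-derivative mode, and then combine the $\mathcal{O}(1)$ remainders with the mode estimates of Lemmas \ref{lem-estmodes} and \ref{lem-estadj} to obtain the error sums. The paper packages the cancellation step slightly more crisply as the observation that $|c^+_{jk}|,|d_{jk}^\pm|=\mathcal{O}(\lambda^{-1})$ only for $j=1$ and $\mathcal{O}(1)$ otherwise (since the $(k,j)$-minor vanishes to order $\lambda$ when the degenerate column $\phi_1^+\equiv\phi_{n+2}^-$ is still present), but this is exactly the mechanism you describe.
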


\begin{proof} The proof follows directly from the representations of $G_\lambda(x,y)$ derived in Proposition \ref{prop-greenlow} and the corresponding estimates on normal modes, noting that
$$|c^+_{jk}|,|d_{jk}^\pm| = \left\{\begin{matrix} \cO(\lambda^{-1}) & j=1,\\\cO(1)&\mbox{otherwise.} \end{matrix}\right.$$
Indeed, we recall, for instance, that
$$c^+_{jk} = D^{-1}_-\begin{pmatrix}-I_{k_p}&0\end{pmatrix}\begin{pmatrix}\Phi^+ & W_{k_p}^+ & \Phi^-\end{pmatrix}^{kj}\Psi^-,$$where $()^{kj}$ denotes the determinant of the $(k,j)$ minors. For the case $j\not=1$, by using the fact that we choose $\phi_1^{+} \equiv \phi_{n+2}^- \equiv \bar W_x$ at $\lambda=0$, determinant of the $(k,j)$ minor therefore has the order one in $\lambda$, which cancels out the $\lambda^{-1}$ term coming from our spectral stability condition: $|D_-^{-1}| \le \cO(\lambda^{-1})$.

\end{proof}

%\section{Low frequency bounds}\label{sec:lowfreqbds}

\section{Pointwise %Green function
bounds %of the ``low-frequency'' Green function}
and low-frequency estimates} \label{sec:ptwise}

%\hole{TO DO: define effective diffusion $\beta\pm:=(LG)_\pm$,
%dynamic dissipation rate as in \cite{MaZ3}. %NOTE: below bounds are
%for scalar case only- system is similar but a bit more complicated
%to state.
%}

In this section, using the previous pointwise bounds (Propositions
\ref{prop-nearzero} and \ref{prop-awayzero}) for the resolvent
kernel in low-frequency regions, we derive pointwise bounds for the
``low-frequency'' Green function:
\begin{equation}\label{LFGreenfn} G^I(x,t;y): = \frac{1}{2\pi
i}\int_{\Gamma\bigcap\{|\lambda|\le r\}}e^{\lambda
t}G_\lambda(x,y)d\lambda\end{equation}
where $\Gamma$ is any contour near zero, but away from the essential spectrum.

\begin{proposition} \label{prop-greenbounds} %TODO: add assumptions. Under the
%model assumptions together with assumed Evans stability,
%CHANGED
Under the assumptions of Theorem \ref{theo-main},
%Assuming as in Theorem \ref{theo-main}
defining the effective
%ENDCHANGED
diffusion
%CHANGED
%$\beta_\pm:=(L_pL.dGR_p)_\pm$
$\beta_\pm:=(L_pL B R_p)_\pm$
%ENDCHANGED
(see \eqref{a-diag}), the low-frequency
Green distribution $G^I(x,t;y)$ associated with the linearized
evolution equations  may be decomposed as
\begin{equation}
G^I(x,t;y)= E+  \widetilde G^I + R, \label{ourdecomp}
\end{equation}
where, for $y<0$:
\begin{equation}\label{E}
E(x,t;y):= \sum_{a_k^->0}\bar U_x(x)\tilde V_{k,0}^-e_k(y,t),
\end{equation}
\begin{equation}\label{e}
  e_k(y,t):=\left(\textrm{errfn }\left(\frac{y+a_{k}^-t}{\sqrt{4\beta_{-}t}}\right)
  -\textrm{errfn }\left(\frac{y-a_k^{-}t}{\sqrt{4\beta_{-}t}}\right)\right);
\end{equation}
\begin{equation}\label{GIbounds}
\begin{aligned}
|\partial_{x}^\gamma \partial_y^\beta  \widetilde G^I(x,t;y)|\le  C&t^{-(|\alpha|+|\gamma|)/2}\Big( \sum_{k=1}^n
t^{-1/2}e^{-(x-y-a_k^{-} t)^2/Mt} \\
&+\sum_{a_k^{-} < 0, \, a_j^{-} > 0} \chi_{\{ |a_k^{-} t|\ge |y| \}}
t^{-1/2} e^{-(x-a_j^{-}(t-|y/a_k^{-}|))^2/Mt}
 \Big), \\
\end{aligned}
\end{equation}
\begin{equation}
\begin{aligned}
R(x,t;y)&= \cO(e^{-\eta(|x-y|+t)}) + \cO(e^{-\eta
t})\chi(x,y)\Big[1+\frac{1}{a_p(y)}(x/y)^\alpha\Big],
\end{aligned}
\label{Rbounds}
%CHANGED (TODO: Toan and Ramon check this please!)
% It is correct.... RP
\end{equation}for some $\eta$, $C$, $M>0$, where $0\le |\beta|, |\gamma| \le 1$, $\alpha = \frac{LB(0)+a_p'(0)}{|a_p'(0)|}$ and
%\end{equation}for some $\eta$, $C$, $M>0$, where $0\le |\beta|, |\gamma| \le 1$ and $\alpha = \frac{LB(0)+a_p'(0)}{|a_p'(0)|}$ and
%ENDCHANGED
$$\chi(x,y) =\left\{\begin{matrix}1,&&-1<y<x<0\\0,&&
\mbox{otherwise.}\end{matrix}\right.$$

%Moreover, for $|x-y|/t$ sufficiently large, $|G|\le Ce^{-\eta
%t}e^{-|x-y|^2/Mt)}$ as in the strictly parabolic case.
Symmetric bounds hold for $y\ge 0$.
\end{proposition}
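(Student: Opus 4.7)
The plan is to follow the standard inverse-Laplace contour deformation framework of \cite{MaZ1,MaZ4,ZH}, adapted to our degenerate setting. The idea is to substitute the pointwise resolvent-kernel representations from Propositions \ref{prop-nearzero} and \ref{prop-awayzero} into the integral \eqref{LFGreenfn} and evaluate term by term on an optimally shifted contour $\Gamma$ near $\lambda=0$. I will take $\Gamma$ to be a small Riemann--saddle parabola $\{\lambda: \Re\lambda = \eta_0 - \eta_1 (\Im \lambda)^2\}\cap\{|\lambda|\le r\}$, tangent to the imaginary axis at zero and lying to the right of the essential spectrum, indented if necessary to pick up the simple pole at $\lambda=0$.

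First I will treat $|y|$ large using the representations \eqref{greenlow}--\eqref{greenlow-}. The polar piece $E_\lambda(x,y)$ is handled by deforming $\Gamma$ across $\lambda=0$: the residue yields $\bar U_x(x)\tilde V_{j,0}^-$ times a regular factor $\lambda^{-1}(e^{\lambda t + \lambda y/a_j^-}-1)$, which via the standard Fourier representation of the error function on the parabolic contour integrates to exactly the expression $e_k(y,t)$ in \eqref{e}, producing $E(x,t;y)$. The remaining scattering factors of the form $e^{(\lambda/a_j^-)y}e^{(-\lambda/a_k^-)x+\cO(\lambda^2)}$, multiplied by $e^{\lambda t}$, will be estimated by the Riemann saddle-point method on the optimally shifted $\Gamma$: for $j=k$ this produces Gaussian kernels $t^{-1/2}e^{-(x-y-a_k^-t)^2/Mt}$, while for $j\neq k$ it gives the scattered kernel $t^{-1/2}e^{-(x-a_j^-(t-|y/a_k^-|))^2/Mt}$ localized by $\chi_{\{|a_k^-t|\ge|y|\}}$ (the indicator encoding that an incoming $a_k^-$-characteristic from $y$ must reach the shock layer before scattering into an outgoing $a_j^-$-mode); together these comprise $\widetilde G^I$. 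The exponentially localized remainders $\cO(e^{-\theta|x|})$, $\cO(e^{-\theta|y|})$, $\cO(e^{-\theta|x-y|})$ integrated over $|\lambda|\le r$ produce the $\cO(e^{-\eta(|x-y|+t)})$ portion of $R$. For derivatives, $\partial_x$ and $\partial_y$ bring down factors $\mu_j^\pm(\lambda)=-\lambda/a_j^\pm+\cO(\lambda^2)$, so each derivative contributes an extra $\lambda$ inside the saddle-point integral and hence the extra $t^{-1/2}$ factor in \eqref{GIbounds}.

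Second, for $|y|$ near zero I will use Proposition \ref{prop-nearzero}. The $\lambda^{-1}\bar W_x v_0([u])$ piece gives by the same residue computation a contribution that must match the $y\to 0^-$ limit of the $E$-term obtained from the far-field expression, so one identifies $v_0([u])=\sum_{a_k^->0}\tilde V_{k,0}^-$ up to an $\cO(e^{-\eta t})$ error absorbed into $R$. The bounded $\cO(1)$ and $\cO(e^{-\theta|x|})$ terms in \eqref{G0-est1}--\eqref{G0-est3} contribute $\cO(e^{-\eta t})$ to $R$ after integration on $|\lambda|\le r$, using analyticity of the integrand in a punctured neighborhood of zero. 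The degenerate factor $a_p(y)^{-1}(x/y)^\alpha$ from \eqref{G0-est2} is supported in $-\epsilon_0<y<x<0$ and is precisely what the indicator $\chi(x,y)$ in \eqref{Rbounds} isolates.

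The main obstacle will be reconciling the near-zero and far-from-zero resolvent representations, which have different algebraic structure (the bounded mode $W_{k_p}^+$ appears only in the former), and showing that their contributions to $E$ agree across the transition region $|y|\sim 1$ up to residues absorbed into $R$. This amounts to matching $v_0([u])$ with $\sum_{a_k^->0}\tilde V_{k,0}^-$, which should follow from the connection relations in \eqref{eq:goodmodes} evaluated at $\lambda=0$ together with Lemma \ref{lemma-mD}. A secondary but manageable difficulty is the apparent blow-up of $a_p(y)^{-1}$ near $y=0$; the compensation from the cutoff $\chi$ on the bounded region $-1<y<x<0$ and the boundedness of $(x/y)^\alpha$ there will keep $R$ within the claimed bounds.
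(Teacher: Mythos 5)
Your approach is precisely the one the paper takes: substitute the resolvent bounds of Propositions \ref{prop-nearzero} and \ref{prop-awayzero} into the inverse-Laplace integral \eqref{LFGreenfn}, extract $E$ from the $\lambda^{-1}$ pole, obtain $\widetilde G^I$ by the Riemann saddle-point method on the parabolic contour of \cite{ZH,MaZ1,MaZ3}, and move the contour into $\{\Re\lambda<0\}$ to get the $\cO(e^{-\eta t})$ prefactor on the singular $a_p(y)^{-1}(x/y)^\alpha$ term in $R$. The matching issue you flag between $v_0([u])$ and $\sum_{a_k^->0}\tilde V_{k,0}^-$ is real but handled implicitly in the paper's terse proof, since for $|y|$ near zero the discrepancy between the two forms of the pole contribution sits inside the $\cO(1)$ error terms of \eqref{G0-est1}--\eqref{G0-est3}, which are already absorbed into $\widetilde G^I$ and $R$.
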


\begin{proof} Having the resolvent kernel estimates in Propositions \ref{prop-nearzero} and
\ref{prop-awayzero}, we can now follow the previous analyses of
\cite{ZH,MaZ1,MaZ3}. Indeed, the claimed bound for $E$ precisely
comes from the $\lambda^{-1}$ term.
Likewise, estimates of $\widetilde G^I$ are due to bounds in
Proposition \ref{prop-awayzero} for $y$ away from zero and those in
Proposition \ref{prop-nearzero} for $y$ near zero but $x$ away from
zero. The singularity occurs only in the case $-1<y<x<0$, as
reported in Proposition \ref{prop-nearzero}. In this case, using the
estimate \eqref{G0-est2} and moving the contour $\Gamma$ in
\eqref{LFGreenfn} into the stable half-plane $\{\Re\lambda<0\}$, we
have
$$\int_\Gamma e^{\lambda t}\Big(1+\frac{|x|^{\alpha}}{a_p(y)|y|^\alpha}\Big)d\lambda
= \cO(e^{-\eta t})\Big(1+\frac{|x|^{\alpha}}{a_p(y)|y|^\alpha}\Big),$$
which precisely contributes to the second term in $R(x,t;y)$. The
first term in $R(x,t;y)$ is as usual the fast decaying term.
\end{proof}

%\section{Low-frequency estimate}\label{sec-LFest}

With the above pointwise estimates on the (low-frequency) Green
function, we have the following from \cite{MaZ1,MaZ3}.

\begin{lemma}[\cite{MaZ1,MaZ3}]\label{lem-estGI} %TODO: assumptions.
%CHANGED
%Assuming as in the main theorem,
Under the assumptions of Theorem \ref{theo-main},
$\widetilde G^I$ satisfies
\begin{equation}\label{estGI}
\Big|\int_{-\infty}^{+\infty} \partial_y^\beta\widetilde
G^I(\cdot,t;y) f(y)dy \Big|_{L^p} \le C (1+t)^{-\frac 12
(1/q-1/p)-|\beta|/2}|f|_{L^q},
\end{equation}
for all $t\ge 0$, some $C>0$, for any $1\le q\le p$.
\end{lemma}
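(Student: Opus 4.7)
The plan is to reduce the claim to a straightforward application of Young's convolution inequality, using the pointwise bounds for $\widetilde G^I$ established in Proposition \ref{prop-greenbounds}. Since the right-hand side of \eqref{GIbounds} is a finite sum of two essentially Gaussian objects, I would estimate the contribution of each summand separately and then sum; the $\partial_y^\beta$ derivative has already been absorbed into the pointwise bound as the prefactor $t^{-|\beta|/2}$, so it suffices to treat $\beta=0$ and keep track of this factor.

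First I would handle the ``pure'' heat kernel pieces of the form
\begin{equation*}
K^k_t(x-y) := t^{-1/2} e^{-(x-y-a_k^- t)^2/Mt},
\end{equation*}
which are simply translates of a standard Gaussian with variance proportional to $t$. A direct computation gives $\|K^k_t\|_{L^r(\R)} \le C\, t^{-\frac{1}{2}(1-1/r)}$ for every $1\le r \le \infty$. Writing $\int K^k_t(x-y) f(y)\,dy = (K^k_t * f)(x)$ and applying Young's inequality with exponents $1+1/p = 1/r + 1/q$, i.e.\ $1-1/r = 1/q-1/p$, gives
\begin{equation*}
\Big\| \int K^k_t(\cdot - y)\, f(y)\,dy \Big\|_{L^p} \le C\, t^{-\frac{1}{2}(1/q-1/p)} \|f\|_{L^q},
\end{equation*}
which is the large-$t$ form of the desired bound. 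For small $t$, the same kernel satisfies $\|K^k_t * f\|_{L^p} \le C \|f\|_{L^p} \le C \|f\|_{L^q}$ when $q \le p$ (interpolated via H\"older on a fixed bounded region, together with the $L^1$–$L^\infty$ smoothing of the Gaussian on short times). Combining the two regimes replaces $t^{-\frac{1}{2}(1/q-1/p)}$ by $(1+t)^{-\frac{1}{2}(1/q-1/p)}$, and reinserting the $t^{-|\beta|/2}$ prefactor yields \eqref{estGI}.

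Second, I would handle the ``scattering'' pieces
\begin{equation*}
S^{k,j}_t(x,y) := \chi_{\{|a_k^- t|\ge |y|\}}\, t^{-1/2}\, e^{-(x-a_j^-(t-|y/a_k^-|))^2/Mt}, \qquad a_k^-<0 < a_j^-,
\end{equation*}
in the same spirit. The key observation, already used in \cite{MaZ1,MaZ3}, is that the change of variables $\tilde y := |y/a_k^-|$ converts $S^{k,j}_t(x,y)$ into a shifted Gaussian of the form $t^{-1/2}e^{-(x - a_j^-(t-\tilde y))^2/Mt}$ multiplied by the indicator $\chi_{\{\tilde y \le t\}}$. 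In this variable the kernel is, up to a constant Jacobian, a convolution kernel in $x - a_j^- t + a_j^- \tilde y$; estimating its $L^r$ norm in $x$ for each fixed $\tilde y$ and then integrating the resulting $L^q$-norm of $f$ over $\tilde y \in [0,t]$ via Minkowski/Young exactly reproduces the same bound $t^{-\frac{1}{2}(1/q-1/p)}$. The indicator is essential for small $t$ to prevent the $\tilde y$-integral from blowing up.

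The only genuinely routine but slightly delicate point is bookkeeping near $t = 0$ to replace $t^{-1/2(1/q-1/p)}$ by $(1+t)^{-1/2(1/q-1/p)}$; this is handled by observing that on $t \in [0,1]$ the kernel bounds from Proposition \ref{prop-greenbounds} combine with a cruder $L^q\to L^p$ estimate (e.g.\ interpolation between $L^1\to L^1$ and $L^1\to L^\infty$ bounds) to give a uniform constant. This is the same step as in \cite[Lemma 4.2]{MaZ1} and \cite[Prop.\ 2.7]{MaZ3}, which is why the statement is attributed to those works; I would simply cite them after verifying that the pointwise bounds in \eqref{GIbounds} have the same structure used there. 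The mild obstacle is confirming this structural identity—once it is checked, the rest is a mechanical application of Young's inequality.
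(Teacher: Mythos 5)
The paper gives no proof of this lemma; it is cited wholesale from \cite{MaZ1,MaZ3}. Your reconstruction via Young's inequality is the right idea and is essentially what those references do, so the overall route is correct. The computation of the $L^r$-norm of the translated Gaussian and the application of Young's inequality with $1+1/p = 1/r+1/q$ is standard and gives the claimed $t^{-\frac12(1/q-1/p)-|\beta|/2}$ decay for $t$ bounded away from zero, and your treatment of the scattering pieces via the change of variable $\tilde y = |y/a_k^-|$ is also the standard mechanism.

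The gap is in your short-time argument. The inequality $\|f\|_{L^p} \le C\|f\|_{L^q}$ for $q\le p$ is false on the whole line $\R$; it holds only on domains of finite measure. More fundamentally, the pointwise Gaussian bound \eqref{GIbounds} alone cannot produce a uniform $L^q\to L^p$ bound as $t\to 0$ when $q<p$: the kernel $t^{-1/2}e^{-(x-y-a_k^-t)^2/Mt}$ converges to a delta function, and the optimal Young bound $t^{-\frac12(1/q-1/p)}$ genuinely diverges. The tiling argument you gesture at (Hölder on unit intervals, summed with $\ell^q\hookrightarrow\ell^p$) controls $\|K_t*f\|_{L^p}$ by a quantity involving $\|K_t\|_{L^{q'}}$ on a fixed interval, which still blows up as $t\to 0$. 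The correct small-$t$ input is the representation of $\widetilde G^I$ as a contour integral over the \emph{bounded} set $\Gamma\cap\{|\lambda|\le r\}$, using the resolvent bounds of Propositions \ref{prop-nearzero} and \ref{prop-awayzero}: on $t\in[0,1]$ the kernel is uniformly bounded with spatial localization, hence the operator norm is $\cO(1)$, and this matches $(1+t)^{-\frac12(1/q-1/p)-|\beta|/2}\sim 1$. That step is supplied in \cite{MaZ1,MaZ3} and cannot be recovered from \eqref{GIbounds} alone, so citing them is not just a formality — it is where the short-time regime is actually handled.

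One minor remark: the exponent $t^{-(|\alpha|+|\gamma|)/2}$ in \eqref{GIbounds} is a typo for $t^{-(|\beta|+|\gamma|)/2}$, which you implicitly corrected by writing the $\partial_y^\beta$ derivative cost as $t^{-|\beta|/2}$.
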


We recall the following fact from \cite{Z4}.
\begin{lemma}[\cite{Z4}]\label{lem-kernel-e} The kernel $e$
satisfies
\begin{equation} \label{e-bound}\begin{aligned}
&|e_y(\cdot, t)|_{L^p}, |e_t(\cdot, t)|_{L^p}, \le C t^{-\frac 12
(1-1/p)},\\
&|e_{yt}(\cdot, t)|_{L^p}\le C t^{-\frac 12 (1-1/p)-1/2}.
\end{aligned}
\end{equation}
for all $t> 0$, some $C>0$, for any $p\ge 1$.
\end{lemma}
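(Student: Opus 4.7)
The plan is to represent $e$ as an integral of the one-dimensional heat kernel and then differentiate under the integral sign. Writing $\textrm{errfn}(z)=\tfrac{2}{\sqrt{\pi}}\int_0^z e^{-s^2}\,ds$ and performing the substitution $s=r/(2\sqrt{\beta_- t})$ in \eqref{e}, one obtains
\[
e_k(y,t)=2\int_{y-a_k^- t}^{y+a_k^- t} K(r,t)\,dr,\qquad K(r,t):=\frac{1}{\sqrt{4\pi\beta_- t}}\,e^{-r^2/(4\beta_- t)},
\]
the heat kernel with diffusion $\beta:=\beta_-$; set $a:=a_k^-$. The standard scalings $|\partial_y^{j}K(\cdot,t)|_{L^p}\lesssim t^{-1/2-j/2+1/(2p)}$ are then at our disposal. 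The idea is to express every needed derivative of $e$ through Leibniz's rule and the heat equation $K_t=\beta K_{rr}$ as a finite combination of Gaussian evaluations at $y\pm at$, and to handle the dangerous ``difference'' terms by a fundamental-theorem-of-calculus plus Young's inequality argument that converts $F(y+at)-F(y-at)$ into a convolution $(\chi_{[-at,at]}*F')(y)$.

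Directly, $e_y=2[K(y+at,t)-K(y-at,t)]$, whose $L^p$-norm is $O(t^{-(1-1/p)/2})$ by Gaussian scaling. Leibniz combined with the heat equation gives
\[
e_t=2a[K(y+at,t)+K(y-at,t)]+2\beta[K_y(y+at,t)-K_y(y-at,t)],
\]
and a further $\partial_y$ produces
\[
e_{yt}=2a[K_y(y+at,t)+K_y(y-at,t)]+2\beta[K_{yy}(y+at,t)-K_{yy}(y-at,t)].
\]
The Gaussian pieces are controlled directly: $|2aK(y\pm at,t)|_{L^p}\lesssim t^{-(1-1/p)/2}$ and $|2aK_y(y\pm at,t)|_{L^p}\lesssim t^{-(1-1/p)/2-1/2}$, matching the target rates. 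What remains are the two ``difference'' terms, which carry the genuine content of the lemma.

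The main obstacle is that these differences, bounded by the triangle inequality alone, would be off by a factor of $t^{-1/2}$ relative to the claimed bound, since $|K_y|_{L^p}\sim t^{-1+1/(2p)}$ and $|K_{yy}|_{L^p}\sim t^{-3/2+1/(2p)}$ are both worse by $t^{-1/2}$. To recover the missing half power, write
\[
F(y+at)-F(y-at)=\int_{y-at}^{y+at}F'(\xi)\,d\xi=(\chi_{[-at,at]}*F')(y),
\]
and apply Young's inequality $|\chi_{[-at,at]}*F'|_{L^p}\le|\chi_{[-at,at]}|_{L^1}|F'|_{L^p}=2|a|t\,|F'|_{L^p}$. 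Taking $F=K_y$ gives $|2\beta[K_y(\cdot+at,t)-K_y(\cdot-at,t)]|_{L^p}\lesssim t\cdot t^{-3/2+1/(2p)}=t^{-(1-1/p)/2}$, finishing the $e_t$ estimate; taking $F=K_{yy}$ gives $|2\beta[K_{yy}(\cdot+at,t)-K_{yy}(\cdot-at,t)]|_{L^p}\lesssim t\cdot t^{-2+1/(2p)}=t^{-(1-1/p)/2-1/2}$, finishing the $e_{yt}$ estimate. This establishes all three bounds for every $p\ge 1$ and $t>0$.
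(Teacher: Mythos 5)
The paper does not give its own proof of this lemma; it is stated as a citation to \cite{Z4}. Your self-contained argument is correct, and it is the standard derivation of such errfn-kernel bounds: rewriting $e_k(y,t)=2\int_{y-at}^{y+at}K(r,t)\,dr$, invoking the heat equation $K_t=\beta K_{rr}$ so that the $t$-derivative of the integral becomes boundary evaluations of $K$ and $K_r$, and then using $F(y+at)-F(y-at)=(\chi_{[-at,at]}*F')(y)$ together with Young's inequality $|\chi_{[-at,at]}*F'|_{L^p}\le 2at\,|F'|_{L^p}$ (valid for all $p\ge 1$) to recover the factor $t^{1/2}$ that the triangle inequality would lose on the ``difference'' pieces. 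Combined with the Gaussian scaling $|\partial_r^j K(\cdot,t)|_{L^p}\lesssim t^{-\frac12(1-1/p)-j/2}$, all three bounds follow for every $t>0$; the normalization chosen for $\mathrm{errfn}$ only affects the constant $C$ since $e_k$ is a difference of two errfn values.
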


%\hole{TODO: add discussions, or?. .}

%The following proves our expectation of the $\delta$-function
%behavior, coming from the singularity near zero (see Remark
%\ref{rem-singterm}).

Finally, we have the following estimate on $R$ term.
\begin{lemma}\label{lem-estR}%TODO: assumptions.
%CHANGED
%Assuming as in the main theorem,
Under the assumptions of Theorem \ref{theo-main},
%ENDCHANGED
$R(x,t;y)$ satisfies
\begin{equation} \label{S1-bound}
\Big|\int_{-\infty}^{+\infty}R(\cdot,t;y) f(y)dy \Big|_{L^p} \le C
e^{-\eta t}(|f|_{L^p} + |f|_{L^\infty}),
\end{equation}
for all $t\ge 0$, some $C,\eta>0$, for any $1\le p\le \infty$.
%NOTE: in fact we get a bound by |.|_{L^2} + |.|_{L^\infty}
%but probably, state this way, easy to combine with HF estimate
\end{lemma}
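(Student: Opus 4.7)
\medskip

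\noindent\textbf{Proof proposal.}
The plan is to decompose $R(x,t;y)$ according to the two terms appearing in the formula \eqref{Rbounds} and estimate each separately by a direct kernel bound. Write
\[
R(x,t;y) = R_1(x,t;y) + R_2(x,t;y),
\]
where $R_1 = \mathcal{O}(e^{-\eta(|x-y|+t)})$ is a convolution-type kernel and
\[
R_2(x,t;y) = \mathcal{O}(e^{-\eta t})\,\chi(x,y)\Big[1+\frac{1}{a_p(y)}(x/y)^\alpha\Big]
\]
is the singular piece supported on the bounded region $\{-1<y<x<0\}$. The overall target $e^{-\eta t}(|f|_{L^p}+|f|_{L^\infty})$ then breaks into an $|f|_{L^p}$ contribution from $R_1$ and an $|f|_{L^\infty}$ contribution from $R_2$.

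For $R_1$, I would simply apply Young's convolution inequality: since $e^{-\eta|\cdot|}\in L^1(\mathbb{R})$, one has
\[
\Big|\int R_1(\cdot,t;y)\,f(y)\,dy\Big|_{L^p}
\le C e^{-\eta t}\bigl\|e^{-\eta|\cdot|}\bigr\|_{L^1}\,|f|_{L^p}
\le C e^{-\eta t}|f|_{L^p}.
\]
This is the straightforward part and requires nothing beyond the pointwise exponential bound.

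For $R_2$, the support condition $\chi(x,y)=1\Leftrightarrow -1<y<x<0$ forces $|x|<|y|<1$, so the factor $(x/y)^\alpha = (|x|/|y|)^\alpha \le 1$. Using the small-amplitude regularity of the profile together with $a_p(0)=0$ and $a_p'(0)\ne 0$, we have $|a_p(y)|\gtrsim |y|$, whence
\[
\Big|\frac{1}{a_p(y)}(x/y)^\alpha\Big|\;\lesssim\; \frac{|x|^\alpha}{|y|^{1+\alpha}}.
\]
The key computation is that, for each $x\in(-1,0)$,
\[
\int_{-1}^{x}\frac{|x|^\alpha}{|y|^{1+\alpha}}\,dy
= |x|^\alpha\int_{|x|}^{1} s^{-1-\alpha}\,ds
= \frac{1-|x|^\alpha}{\alpha}\le\frac{1}{\alpha},
\]
which is uniformly bounded in $x$ (this is where positivity of $\alpha$, inherited from $\alpha_0>0$ in Proposition \ref{prop:smallep}, is essential). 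The $1$ in the bracket contributes an even milder kernel $\chi(x,y)$ supported on the same bounded region. Pulling $|f|_{L^\infty}$ outside the integral then gives
\[
\Big|\int R_2(\cdot,t;y)\,f(y)\,dy\Big|\;\le\; C\,e^{-\eta t}\,|f|_{L^\infty}\,\mathbf{1}_{(-1,0)}(x),
\]
and since the resulting function of $x$ is supported on a bounded interval, its $L^p$ norm is controlled by $Ce^{-\eta t}|f|_{L^\infty}$ for every $1\le p\le\infty$. Combining the two estimates yields \eqref{S1-bound}.

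The only delicate point, and the one I would verify most carefully, is the integrability of the singular kernel, i.e.\ that the geometric factor $(|x|/|y|)^\alpha$ exactly compensates the $1/|y|$ blow-up coming from $1/a_p(y)$. This balance is provided by the strict positivity of $\alpha$ established in Proposition \ref{prop:smallep}; without it the argument would fail and a different weighted-$L^p$ bound would be needed.
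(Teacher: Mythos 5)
Your proof is correct and follows essentially the same route as the paper: split off the exponentially convolutive part (handled by Young's inequality, which the paper just asserts) and estimate the singular part in $L^\infty\to L^p$ using boundedness of the support and the integrability of $|a_p(y)|^{-1}(x/y)^\alpha$. Your explicit computation $\int_{-1}^x |x|^\alpha |y|^{-1-\alpha}\,dy = (1-|x|^\alpha)/\alpha$ makes precise what the paper dismisses with ``the last integral is bounded by the fact that $a_p(x)\sim x$,'' and your observation that $\alpha>0$ is the crucial ingredient is exactly the point.
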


\begin{proof} The estimate clearly holds for the fast decaying term $e^{-\eta(|x-y|+t)}$ in
$R$. Whereas, to estimate the second term, first notice that it is
only nonzero precisely when $-1<y<x<0$ or $0<x<y<1$. Thus, for
instance, when $-1<x<0$, we estimate
$$\begin{aligned}
\Big|\int_{-\infty}^{+\infty}\chi(x,y)&\Big[1+\frac{1}{a_p(y)}(x/y)^\alpha\Big]f(y)dy
\Big|=\Big|\int_{-1}^{x}\Big[1+\frac{1}{a_p(y)}(x/y)^\alpha\Big]f(y)dy
\Big|
\\&\le C|f|_{L^\infty}\Big[1+\int_{-1}^{x}\frac{1}{|a_p(y)|}(x/y)^\alpha dy\Big]
\\&\le C|f|_{L^\infty},
\end{aligned}$$
where the last integral is bounded by that fact that $a_p(x) \sim x$
as $|x|\to 0$. From this, we easily obtain
$$\begin{aligned}
\Big|\int_{-\infty}^{+\infty}e^{-\eta
t}&\chi(x,y)\Big[1+\frac{1}{a_p(y)}(x/y)^\alpha\Big]f(y)dy
\Big|_{L^p(-1,0)}\le Ce^{-\eta t}|f|_{L^\infty},
\end{aligned}$$
which proves the lemma.\end{proof}

\begin{remark}\label{rem-singterm}\textup{We note here that the singular term
$a_p^{-1}(y)(x/y)^{\alpha}$ appearing in \eqref{G0-est2} and
\eqref{Rbounds} contributes in the time-exponential decaying term.
This thus agrees with the resolvent kernel for the scalar
convected-damped equation $u_t + a_pu_x=-LBu,$ for which we can find
explicitly the Green function as a convected time-exponential decaying
delta function similar as in the relaxation or real viscosity case.}
\end{remark}

\section{Nonlinear damping estimate and high--frequency estimate}\label{sec:damping}

%%%%%%%%%%%%%%%%%%%%%%%%%%%%%%%%%%%%%%%%%%%%%%%%%%%%%%%%%%%%%%%%%%%%%%%%%5
In this section, we %shall
establish an auxiliary damping energy estimate. We first recall the
nonlinear perturbation equations with $(u,q)$ perturbation variables
 \begin{equation}
     \begin{aligned}
     u_{t}+ (A(u)u)_x +Lq_{x} &=\dot \alpha (U_x+u_x),\\
 -q_{xx} + q +(B(u)u)_{x} &=0,
    \end{aligned}
    \label{eqpert}
\end{equation}
where
%CHANGED
% \begin{equation}\label{AB}
% A(u):= df(U+u)\,\qquad  B(u):= dg(U+u).\end{equation}
 we now denote
\begin{equation}\label{AB}
A(u):= Df(U+u),\,\qquad  B(u):= Dg(U+u).\end{equation}
%ENDCHANGED

We prove the following:
\begin{proposition}\label{prop-damping}
%CHANGED
%Assume as in our main
Under the assumptions of Theorem \ref{theo-main},
so long as $\|u\|_{W^{2,\infty}}$ and $|\dot \alpha|$
remain smaller than a small constant $\zeta$ and the amplitude $|U_x|$ is sufficiently small,
%we obtain
there holds
%ENDCHANGED
\begin{equation}\label{damp-est}
\|u\|_{H^k}^2(t)\le Ce^{-\theta t}\|u\|_{H^{k}}^2(0)+ C\int_0^t
e^{-\theta(t-s)}(\|u\|_{L^2}^2+ |\dot\alpha|^2)(s)ds, \qquad
\theta>0,
\end{equation}for $k=1,...,4$.
\end{proposition}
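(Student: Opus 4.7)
The plan is to carry out a Kawashima-type energy estimate, exploiting the symmetrizer $A_0$ from (S1) and the skew-symmetric compensator $K$ from Lemma \ref{lemmaKaw} to compensate for the fact that the direct dissipation from $Lq_x$ is only rank-one along the $p$-th field.

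First I would differentiate the perturbation system $k$ times in $x$, for $1\le k\le 4$, to obtain
\[
(\partial_x^k u)_t + A(U)(\partial_x^k u)_x + L(\partial_x^k q)_x = \mathcal{R}_k + \dot\alpha\,\partial_x^k(U_x+u_x),
\]
where $\mathcal{R}_k$ collects commutators $[\partial_x^k,A(u)\cdot]$ together with the quasilinear correction $(A(u)-A(U))\partial_x^k u_x$. Since $|U_x|$ is small (small-amplitude profile) and $\|u\|_{W^{2,\infty}}\le\zeta$, each term in $\mathcal{R}_k$ is either pointwise $\cO(\zeta+|U_x|)(|u|+|\partial_x u|+\cdots+|\partial_x^k u|)$ or a product that by Moser / Gagliardo--Nirenberg estimates is dominated by $\zeta\|u\|_{H^k}^2$ in the $L^2$ pairing.

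Next, pairing the differentiated equation with $A_0\,\partial_x^k u$ and integrating, I use that $A_0 A$ is symmetric to move a derivative and pick up only an $|U_x|\|u\|_{H^k}^2$ boundary-type contribution. For the radiative coupling I substitute, via the elliptic equation, $\partial_x q = -\cK\partial_x(B(U)u) + \text{l.o.t.}$ so that the term $\langle A_0L\partial_x^{k+1}q,\partial_x^k u\rangle$ generates the positive semi-definite dissipation $\langle A_0LB(U)\partial_x^{k+1}u,\partial_x^k u\rangle$ up to a commutator absorbed into $\zeta\|u\|_{H^k}^2$; however this only controls $\|l_p\cdot\partial_x^{k+1}u\|_{L^2}^2$. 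To recover full dissipation on all characteristic components I add the Kawashima correction $\frac{d}{dt}\langle K(U)\partial_x^{k}u,\partial_x^k u_x\rangle$ with small weight $\delta>0$; replacing $\partial_x^k u_t$ by $-A\partial_x^{k+1}u-L\partial_x^{k+1}q+\cdots$ and using skew-symmetry of $K$, Lemma \ref{lemmaKaw} then yields
\[
\Re\langle(KA+A_0LB)\partial_x^{k+1}u,\partial_x^{k+1}u\rangle \ge \theta_0\|\partial_x^{k+1}u\|_{L^2}^2,
\]
with $\theta_0>0$. The forcing $\dot\alpha(U_x+u_x)$ is controlled by Cauchy--Schwarz and Young's inequality as $C|\dot\alpha|^2+\half\theta_0\|\partial_x u\|_{L^2}^2$.

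Summing these pieces over $1\le k'\le k$ and defining the modified energy
\[
\mathcal{E}_k^2 := \sum_{k'=0}^{k}\langle A_0\partial_x^{k'}u,\partial_x^{k'}u\rangle + \delta\sum_{k'=1}^{k}\langle K\partial_x^{k'-1}u,\partial_x^{k'}u\rangle,
\]
which (for $\delta$ small) is equivalent to $\|u\|_{H^k}^2$, I obtain the Friedrichs-type differential inequality
\[
\frac{d}{dt}\mathcal{E}_k^2 + \theta\|u\|_{H^k}^2 \le C\|u\|_{L^2}^2 + C|\dot\alpha|^2,
\]
after using that all quasilinear/commutator error terms are $\cO(\zeta+|U_x|)\|u\|_{H^k}^2$ and hence absorbed in the dissipation. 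Gronwall's inequality then gives \eqref{damp-est}. The main obstacle I expect is organizing the Kawashima compensator estimate at the top order $k=4$: one must verify carefully that $\dt\langle K\partial_x^{k-1}u,\partial_x^k u\rangle$ produces the full dissipation $\theta_0\|\partial_x^k u\|_{L^2}^2$ without overshooting into terms involving $\partial_x^{k+1}u$ that cannot be absorbed, and that the $q$-substitution (which uses the nonlocal operator $\cK\partial_x$) introduces only lower-order tails of the form $\cO(e^{-\eta|x|})\|u\|_{H^{k-1}}$ coming from derivatives of $B(U)$ that are controlled by $|U_x|$ small.
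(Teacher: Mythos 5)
Your proposal follows essentially the same route as the paper: Friedrichs estimates with the symmetrizer $A_0$, substitution of the elliptic equation to expose the rank-one dissipation $A_0LB$, the Kawashima compensator $K$ from Lemma \ref{lemmaKaw} to recover full dissipation via $\Re(KA+A_0LB)\ge\theta>0$, absorption of commutators of size $\cO(|U_x|+\zeta)$, and Gronwall on a modified energy $\cE\sim\|u\|_{H^k}^2$. One remark on the bookkeeping in your middle paragraph: substituting $q_{xx}=q+(Bu)_x$ into $\langle A_0L\partial_x^{k+1}q,\partial_x^k u\rangle$ produces the quadratic form $\langle A_0LB\,\partial_x^{k}u,\partial_x^k u\rangle$ at order $k$ (not a mixed $\partial_x^{k+1}u$ vs.\ $\partial_x^k u$ term, and not dissipation of $\partial_x^{k+1}u$), so the matching Kawashima correction must be $\langle K\partial_x^{k-1}u,\partial_x^k u\rangle$, yielding $-\langle KA\,\partial_x^k u,\partial_x^k u\rangle$ at the same order $k$ — which is exactly what the paper does and what your final modified energy $\cE_k$ actually encodes. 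Your worry about uncancellable $\partial_x^{k+1}u$ terms is handled, as the paper shows, by the symmetry of $A_0A$: the top-order convective term $\langle A_0A\partial_x^{k+1}u,\partial_x^k u\rangle$ integrates by parts to $-\tfrac12\langle(A_0A)_x\partial_x^k u,\partial_x^k u\rangle=\cO(|U_x|+\zeta)\|\partial_x^k u\|_{L^2}^2$, so no $\partial_x^{k+1}u$ ever appears.
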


\begin{proof} Let us work for the case $\dot\alpha \equiv 0$. The
general case will be seen as a straightforward extension. We first observe that \begin{equation}\label{coeff-est}|A_{0x}|,|A_{0t}|,|A_x|,|A_t|,|B_x|,|B_t| = \cO(|U_x|+\zeta)\end{equation} where $A,B$ are defined as in \eqref{AB} and $A_0$ the symmetrizer matrix as in \eqref{S1}.

We note that from the second equation of
\eqref{eqpert} we easily obtain
\begin{equation}\label{estq}\|q\|_{H^k}\le
C\|u\|_{H^{k-1}},\end{equation} for $k\ge1$. Meanwhile, from the first equation, we estimate
\begin{equation*}
\begin{aligned}\frac12\dt\iprod{A_0u,u} &= \iprod{A_0u_t,u} + \frac12\iprod{A_{0t}u,u} \\&= -\iprod{A_0A_xu+A_0Au_x + Lq_x,u}+ \frac12\iprod{A_{0t}u,u}\\& = -\iprod{A_0A_xu - \frac12(A_0A)_xu + Lq_x,u}+ \frac12\iprod{A_{0t}u,u},
\end{aligned}
\end{equation*}
which, by \eqref{coeff-est} and \eqref{estq}, yields
\begin{equation}\label{keyineq0th}
\begin{aligned}\frac12\dt\iprod{A_0u,u} &\le C\|u\|_{L^2}^2.
\end{aligned}
\end{equation}

Now, to obtain the estimates \eqref{damp-est} in the case of $k=1$, we compute
\begin{equation}\label{1-est}\begin{aligned}
\frac 12 \dt\iprod{A_0u_x,u_x} &=\iprod{(A_0u_t)_x,
u_x} + \frac12 \iprod{A_{0t}u_x,u_x} - \iprod{A_{0x}u_t,u_x}\\&=-\iprod{(A_0Au_x + A_0Lq_x)_x,
u_x} + \frac12 \iprod{A_{0t}u_x,u_x} - \iprod{A_{0x}u_t,u_x}\\&=-\iprod{A_0Lq_{xx},u_x} - \iprod{A_0Au_{xx},u_x},
 + \iprod{\cO(|U_x|+\zeta)u_x,u_x}  + \|q\|_{H^1}^2\\&=-\iprod{A_0Lq_{xx},u_x} + \iprod{\cO(|U_x|+\zeta)u_x,u_x}  + \cO(1)\|u\|_{L^2}^2\\&=-\iprod{A_0LBu_x,u_x} + \iprod{\cO(|U_x|+\zeta)u_x,u_x}  + \cO(1)\|u\|_{L^2}^2,
\end{aligned}\end{equation}
noting that since $A_0A$ is symmetric, we have $$-\iprod{A_0Au_{xx},u_x} = \frac 12\iprod{(A_0A)_xu_x,u_x}  =  \iprod{\cO(|U_x|+\zeta)u_x,u_x}.$$

Likewise, in spirit of Kawashima-type estimates, we compute
\begin{equation}\label{K-est}\begin{aligned}
\frac12\dt\iprod{Ku,u_x} &=\frac12\iprod{K_tu,u_x} + \frac12\iprod{Ku_t,u_x} +\frac12\iprod{Ku,u_{xt}}\\&=\frac12\iprod{K_tu,u_x} + \frac12\iprod{Ku_t,u_x} -\frac12\iprod{Ku_x,u_{t}}-\frac12\iprod{K_xu,u_t}\\&=\iprod{Ku_t,u_x} +\frac12\iprod{K_tu,u_x}   -\frac12\iprod{K_xu,u_t} \\&=-\iprod{KAu_x+KA_xu+KLq_x,u_x} +\frac12\iprod{K_tu,u_x}   -\frac12\iprod{K_xu,u_t}
\\&=-\iprod{KAu_x,u_x}+ \iprod{\cO(|U_x|+\zeta)u_x,u_x}  + \cO(1)\|u\|_{L^2}^2.
\end{aligned}\end{equation}

Adding \eqref{1-est} and \eqref{K-est} together, we obtain
\begin{equation}\begin{aligned}
\frac12\dt\Big(\iprod{Ku,u_x}&+\iprod{A_0u_x,u_x}\Big) \\&= -\iprod{(KA+A_0LB)u_x,u_x}+ \iprod{\cO(|U_x|+\zeta)u_x,u_x}  + \cO(1)\|u\|_{L^2}^2
\end{aligned}\end{equation}
which, by the Kawashima-type condition \eqref{KALB}: $KA+A_0LB\ge \theta$ and the fact that $\cO(|U_x|+\zeta)$ is sufficiently small, yields
\begin{equation}\label{keyineq1st}\begin{aligned}
\frac12\dt\Big(\iprod{Ku,u_x}+\iprod{A_0u_x,u_x}\Big)\le  -\frac12\theta\iprod{u_x,u_x}+ \cO(1)\|u\|_{L^2}^2
\end{aligned}\end{equation}

Similarly, for $k\ge1$, paying attention to the leading terms, we can compute
\begin{equation*}\begin{aligned}
\frac 12 \dt\iprod{A_0\Dxk u,\Dxk u} &=\iprod{A_0\Dxk u_t,\Dxk u}+\frac 12 \iprod{A_{0t}\Dxk u,\Dxk u}\\&=\iprod{\Dxk(A_0u_t),
\Dxk u} + \iprod{\cO(|U_x|+\zeta)\Dxk u,\Dxk u} + \cO(1)\|u\|_{H^{k-1}}^2,\end{aligned}\end{equation*}
where by using the first equation and then the second one, we obtain
\begin{equation*}\begin{aligned}\iprod{\Dxk(A_0u_t),
\Dxk u}&=-\iprod{\Dxk(A_0Au_x + A_0A_xu + A_0Lq_x),\Dxk u}\\&= - \iprod{A_0L\partial_x^{k-1}q_{xx},
\Dxk u}-\iprod{A_0A\partial_x^{k+1}u,\Dxk u} + \cdots\\&= - \iprod{A_0L\partial_x^{k}(Bu),
\Dxk u}+\frac12\iprod{(A_0A)_x\Dxk u,\Dxk u} + \cdots.\end{aligned}\end{equation*}
Thus, we have obtained
\begin{equation}\label{kth-est}\begin{aligned}
\frac 12 \dt&\iprod{A_0\Dxk u,\Dxk u} \\& = - \iprod{A_0LB \Dxk u,
\Dxk u}+ \iprod{\cO(|U_x|+\zeta)\Dxk u,\Dxk u} + \cO(1)\|u\|_{H^{k-1}}^2.\end{aligned}\end{equation}

Meanwhile, we have the following $k^{th}$-order Kawashima-type energy estimate
\begin{equation}\label{K-kthest}\begin{aligned}
\frac12\dt\iprod{K\partial_x^{k-1}u,\Dxk u} &=\iprod{K\partial_x^{k-1}u_t,\Dxk u} +\frac12\iprod{K_t\partial_x^{k-1}u,\Dxk u}   -\frac12\iprod{K_x\partial_x^{k-1} u,\partial_x^{k-1}u_t}\\&=-\iprod{KA\Dxk u,\Dxk u}+ \iprod{\cO(|U_x|+\zeta)\Dxk u,\Dxk u}  + \cO(1)\|u\|_{H^{k-1}}^2.
\end{aligned}\end{equation}

Hence, as before, adding \eqref{kth-est} and \eqref{K-kthest} together and using the Kawashima-type condition \eqref{KALB}: $KA+A_0LB\ge \theta$ and the fact that $\cO(|U_x|+\zeta)$ is sufficiently small, we obtain
\begin{equation}\label{keyineqkth}\begin{aligned}
\frac12\dt\Big(\iprod{K\partial_x^{k-1}u,\Dxk u}+\iprod{A_0\Dxk u,\Dxk u}\Big)\le  -\frac12\theta\iprod{\Dxk u,\Dxk u} + \cO(1)\|u\|_{H^{k-1}}^2.
\end{aligned}\end{equation}

Now, for $\delta>0$, let us define $$\cE(t):= \sum_{k=0}^s\delta^k\Big(\iprod{K\partial_x^{k-1}u,\Dxk u}+\iprod{A_0\Dxk u,\Dxk u}\Big).$$
By applying the standard Cauchy's inequality on $\iprod{K\partial_x^{k-1}u,\Dxk u}$ and using the positive definiteness of $A_0$, we observe that $\cE(t) \sim \|u\|_{H^k}^2$. We then use the above estimates \eqref{keyineq0th},\eqref{keyineq1st}, \eqref{keyineqkth}, and take $\delta$ sufficiently small to derive
\begin{equation}\label{key-est} \dt\cE(t) \le -\theta_3 \cE(t) + C
\|u\|_{L^2}^2(t)\end{equation}for some $\theta_3>0$, from which
\eqref{damp-est} follows by the standard Gronwall's inequality.
\end{proof}

With the damping nonlinear energy estimates in hands, we immediately obtain the following estimates for high-frequency part of the solution operator $e^{\cL t}$:
\begin{equation}\label{formS2}\begin{aligned}\cS_2(t)%\begin{pmatrix}f\\h\end{pmatrix}
&=\frac{1}{2\pi i}\int_{-\theta_1-i\infty}^{-\theta_1+i\infty}
\chi_{\{|\Imag\lambda|\geq\theta_2\}}e^{\lambda t}
(\lambda-\cL)^{-1}%\begin{pmatrix}f\\h\end{pmatrix}
d\lambda,
\end{aligned}\end{equation}for small positive numbers $\theta_1,\theta_2$; see \eqref{iLT}. Here, $\chi_{\{|\Imag\lambda|\geq\theta_2\}}$ equals to $1$ for $|\Imag\lambda|\geq\theta_2$ and zero otherwise.

\begin{proposition}[High-frequency estimate] \label{prop-HFest} %\hole{TODO: add precise assumptions,}
%CHANGED
%Assuming as in our main theorem, we obtain
Under the assumptions of Theorem \ref{theo-main},
%ENDCHANGED
\begin{equation}\label{HFsoln-est}
\begin{aligned}\|\cS_2(t)f\|_{L^2} &\le C
e^{-\theta_1 t}\|f\|_{H^{2}},\\\|
\partial^\alpha_x\cS_2(t)f\|_{L^{2}}&\le  C
e^{-\theta_1t}\|f\|_{H^{\alpha+2}},\end{aligned}\end{equation}for
some $\theta_1>0$.
%TODO: check - it's supposed to have a better bound in h than f by one derivative as in resolvent est.
\end{proposition}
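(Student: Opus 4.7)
The plan is to combine the contour representation \eqref{formS2} with a frequency-space Kawashima-type resolvent bound, using the nonlinear damping estimate of Proposition \ref{prop-damping} as the main engine, following the pattern of \cite{MaZ1,MaZ3}. Parameterize the high-frequency contour as $\Gamma_{HF}:=\{\lambda=-\theta_1+i\sigma:\,|\sigma|\ge\theta_2\}$; the exponential decay $e^{-\theta_1 t}$ is then built into the representation, and what remains is to control the $\sigma$-integral pointwise in $t$.

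The first step is to establish a uniform resolvent bound on $\Gamma_{HF}$. Setting $u=(\lambda-\cL)^{-1}f$, the resolvent equation reads $\lambda u+(A(U)u)_x+\cJ u=f$. Pairing against $A_0 u+Ku$ with $A_0$ the symmetrizer from \eqref{S1} and $K$ the Kawashima compensator from Lemma \ref{lemmaKaw}, and taking the real part, reproduces the Kawashima-type energy identity \eqref{keyineq1st} in frequency space: the positivity \eqref{KALB} absorbs all the spatial-derivative terms, while the term $\iprod{\lambda u,A_0 u}$ supplies a factor of $|\sigma|$. This yields
\[
\|(\lambda-\cL)^{-1}f\|_{L^2}\le C|\sigma|^{-1}\|f\|_{H^1},\qquad \lambda\in\Gamma_{HF}.
\]
Iterating the identity $(\lambda-\cL)^{-1}=\lambda^{-1}I+\lambda^{-1}\cL(\lambda-\cL)^{-1}$ once more upgrades this to $\|(\lambda-\cL)^{-1}f\|_{L^2}\le C|\sigma|^{-2}\|f\|_{H^2}$. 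Inserting into \eqref{formS2} gives
\[
\|\cS_2(t)f\|_{L^2}\le \frac{e^{-\theta_1 t}}{2\pi}\int_{|\sigma|\ge\theta_2}\|(-\theta_1+i\sigma-\cL)^{-1}f\|_{L^2}\,d\sigma\le Ce^{-\theta_1 t}\|f\|_{H^2}\int_{|\sigma|\ge\theta_2}|\sigma|^{-2}d\sigma,
\]
which is the first bound in \eqref{HFsoln-est}.

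For the higher-regularity bound, note that $v(t):=\cS_2(t)f$ satisfies $v_t=\cL v+F$ for $t>0$, where the forcing $F$ comes from the frequency cutoff $\chi_{|\Im\lambda|\ge\theta_2}$ and is of the form $[\textrm{const}\cdot e^{-\theta_1 t}\sin(\theta_2 t)/t]f$; in particular $\|F(t)\|_{H^k}\le Ce^{-\theta_1 t}\|f\|_{H^k}$ for all $k$. Applying Proposition \ref{prop-damping} (with $\dot\alpha\equiv 0$ and an added forcing term, which enters the right-hand side of \eqref{damp-est} with the same structure) on the interval $[t/2,t]$ gives
\[
\|v(t)\|_{H^\alpha}^2\le Ce^{-\theta t/2}\|v(t/2)\|_{H^\alpha}^2+C\int_{t/2}^t e^{-\theta(t-s)}\bigl(\|v(s)\|_{L^2}^2+\|F(s)\|_{L^2}^2\bigr)ds.
\]
The first term is handled by a repetition of the previous step at regularity $H^{\alpha+2}$, yielding $Ce^{-\theta_1 t}\|f\|_{H^{\alpha+2}}^2$; the integral is bounded by $Ce^{-2\theta_1 t}(\|f\|_{H^2}^2+\|f\|_{L^2}^2)$ on choosing $\theta_1\le\theta/2$, by inserting the $L^2$ estimate from the first step and the exponential bound on $F$.

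The main obstacle is the uniform-in-$\sigma$ resolvent estimate. Although the energy identities are essentially those of Proposition \ref{prop-damping}, some care is required because $A(U)$ is degenerate at $x=0$: at the shock point the $\iprod{A_0 A u_{xx},u_x}$-type integration by parts used in the nonlinear damping estimate must be reorganized using the block-diagonalization \eqref{a-diag} of Section 2, isolating the degenerate characteristic mode from the nondegenerate ones so that the compensator $K$ from \eqref{KALB} can still close the energy estimate. Modulo this localized issue, which is the frequency-space counterpart of the difficulty already resolved in the damping estimate, the rest is standard bookkeeping.
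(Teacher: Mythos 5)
Your overall strategy for the first bound — contour representation, Kawashima-type resolvent estimate, resolvent identity — matches the paper's, but two of the quantitative claims at its core are off and would not carry the argument through.

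First, the claimed resolvent decay rate $\|(\lambda-\cL)^{-1}f\|_{L^2}\le C|\sigma|^{-1}\|f\|_{H^1}$ is too strong. The Kawashima energy identity gives the real-part estimate $\bigl(\Re\lambda+\tfrac{\gamma_1}{2}\bigr)|u|_{H^1}^2\le C\bigl(|u|_{L^2}^2+|\varphi|_{H^1}^2+|\psi|_{L^2}^2\bigr)$, and separately an imaginary-part estimate $|\Im\lambda||u|_{L^2}^2\le C\bigl(|u|_{H^1}^2+|\psi|_{L^2}^2+|\varphi|_{L^2}^2\bigr)$. You cannot pair these to get a factor of $|\sigma|^{-1}$; combining them yields $|\lambda||u|_{L^2}^2+|u|_{H^1}^2\le C\bigl(|\varphi|_{H^1}^2+|\psi|_{L^2}^2\bigr)$, hence only $|u|_{L^2}\lesssim|\lambda|^{-1/2}\bigl(|\varphi|_{H^1}+|\psi|_{L^2}\bigr)$; this is exactly Proposition~\ref{prop-resHF}. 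The coercive term $\iprod{\lambda u,A_0 u}$ contributes $|\Im\lambda|$ to the \emph{square} of the $L^2$ norm, so it buys $|\lambda|^{-1/2}$ and not $|\lambda|^{-1}$ in the norm itself. Second, "iterating the identity $(\lambda-\cL)^{-1}=\lambda^{-1}I+\lambda^{-1}(\lambda-\cL)^{-1}\cL$ once more" does \emph{not} upgrade the $L^2$ bound to $|\sigma|^{-2}\|f\|_{H^2}$: the identity leaves behind the free term $\lambda^{-1}f$, whose $L^2$ norm is only $|\lambda|^{-1}\|f\|_{L^2}$ no matter how smooth $f$ is, and $\int|\sigma|^{-1}d\sigma$ diverges. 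The paper closes this gap by splitting $\cS_2(t)\varphi=S_1+S_2$ and treating $S_2=\frac{1}{2\pi i}\int\chi\,e^{\lambda t}\lambda^{-1}\varphi\,d\lambda$ by a cancellation argument (the unrestricted vertical-contour integral of $\lambda^{-1}e^{\lambda t}$ vanishes, and the leftover low-frequency segment is compact), while $S_1$ carries the factor $|\lambda|^{-1}\cdot|\lambda|^{-1/2}=|\lambda|^{-3/2}$, which is integrable. Your proposal omits this entirely, and without it the contour integral does not converge.

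Your treatment of the derivative bound is a genuinely different route and contains a conceptual issue. The operator $\cS_2(t)$ is a truncated contour integral, not a semigroup, so $v(t):=\cS_2(t)f$ does not solve a clean evolution problem; the formal identity $v_t=\cL v+F$ with $F\sim e^{-\theta_1 t}\tfrac{\sin(\theta_2 t)}{t}f$ is correct modulo a distributional $\delta(t)$ at $t=0$, but the "initial data" $v(0^+)=\cS_2(0^+)f$ is only conditionally defined given the mere $|\lambda|^{-1/2}$ resolvent decay, and applying Proposition~\ref{prop-damping} (which is stated for the \emph{nonlinear} problem) on $[t/2,t]$ requires a linear version and control of $v$ in $H^\alpha$ before the estimate can even be initialized — the very quantity you are trying to bound. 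The paper's intended route, and the simpler one, is just to repeat the same resolvent-identity argument at regularity $H^\alpha$, using the higher-order Kawashima estimates (which underlie the $k\ge 1$ cases of \eqref{damp-est}) to obtain $\|(\lambda-\cL)^{-1}\varphi\|_{H^{\alpha}}\lesssim\|\varphi\|_{H^{\alpha+1}}$ and $\|(\lambda-\cL)^{-1}\varphi\|_{H^{\alpha}}\lesssim|\lambda|^{-1/2}\|\varphi\|_{H^{\alpha+1}}$, and then integrating as in the $L^2$ case.

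Finally, the concern you raise about the degeneracy of $A(U)$ at $x=0$ is not relevant to the resolvent energy estimate: the Kawashima condition \eqref{KALB} holds uniformly on $\cU$ (including at $U(0)$), and all coefficients $A_0$, $A$, $B$, $K$ remain smooth and bounded there. The degeneracy only matters for the ODE construction of the resolvent kernel (Section~\ref{sec:resolker}), where $\Theta$ drops rank, not for the $L^2$-based estimates of the present section.
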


%CHANGED: added proof
%\begin{proof}
%Proof of the proposition follows exactly in a same way as done in our companion paper \cite{LMNPZ1} for the scalar case, by first deriving a Laplace %transformed version of the damping energy estimate to yield bounds for $(\lambda-\cL)^{-1}$.
%\end{proof}

Proof of the proposition follows exactly in a same way as done in our companion paper \cite{LMNPZ1} for the scalar case. We recall it here for sake of completeness. The first step is to estimate the solution
of the resolvent system
\begin{equation*}
\begin{aligned}
    \lambda u + (A\,u)_{x} +L q_{x} &=\varphi,\\
    - q_{xx} +q +(B\, u)_{x} &=\psi,
\end{aligned}
\end{equation*}
where
%CHANGED
%$a(x)=df(U(x))$ and $b(x)=dg(U)$
$A(x)=Df(U)$ and $B(x)=Dg(U)$
%ENDCHANGED
 as before.
\medskip

\begin{proposition}[High-frequency bounds]\label{prop-resHF}
%CHANGED
%Assuming as in our main theorem,
Under the assumptions of Theorem \ref{theo-main},
for some $R,C$ sufficiently large
%ENDCHANGED
and $\gamma>0$ sufficiently small, we obtain
\begin{equation*}
\begin{aligned}
	|(\lambda - \cL)^{-1}(\varphi-L\partial_x (\cK \psi))|_{H^1}
	&\le C \Big( |\varphi|_{H^1}^2+|\psi|_{L^2}^2 \Big),\\
	|(\lambda - \cL)^{-1}(\varphi-L\partial_x (\cK \psi))|_{L^2}
	&\le \frac{C}{|\lambda|^{1/2}}\Big(|\varphi|_{H^1}^2+|\psi|_{L^2}^2\Big),
\end{aligned}
\end{equation*}
for all $|\lambda|\ge R$ and $\Real\lambda \ge -\gamma$, where $\cK:=(-\partial_x^2+ 1)^{-1}$.
\end{proposition}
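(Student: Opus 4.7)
The plan is to carry out Kawashima-type energy estimates on the resolvent system directly, exactly as in the proof of Proposition 4.1 but with $\lambda u$ playing the role of $u_t$. First, I would reduce the coupled system to an equation in $u$ alone by using the second (elliptic) equation to write $q = \cK\psi - \cK\partial_x(Bu)$ and substituting into the first; this yields $(\lambda-\cL)u = F$ with $F := \varphi - L\partial_x\cK\psi$, and the standard elliptic bounds on $\cK=(-\partial_x^2+1)^{-1}$ give $|F|_{H^1}\le C(|\varphi|_{H^1}+|\psi|_{L^2})$.

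For the $H^1$ estimate, I would reproduce the two energy identities of Proposition 4.1 applied to the resolvent equation. Taking the inner product of the $x$-differentiated resolvent equation with $A_0\bar u_x$, and using the elliptic identity $q_{xx}=q+(Bu)_x-\psi$ to replace $Lq_{xx}$, one obtains
\begin{equation*}
\Re\lambda\,\iprod{A_0 u_x,u_x} + \Re\iprod{A_0 LB\,u_x,u_x} \le C\bigl(|u|_{L^2}^2 + |F|_{H^1}|u|_{H^1} + |\psi|_{L^2}|u|_{H^1}\bigr),
\end{equation*}
where the curvature terms $(A_0A)_x$, $A_{0x}$, $B_x$ are controlled by the smallness of $|U_x|$. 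The symmetric dissipation $A_0 LB$ is only rank one, so I would add a second identity obtained by pairing the (undifferentiated) equation with $K\bar u_x$, where $K$ is the skew-symmetric Kawashima matrix of Lemma 1.1. The compensating inequality $\Re(KA+A_0 LB)\ge\theta I$ then delivers the full dissipation $\theta\,|u_x|_{L^2}^2$, while a parallel zeroth-order identity (pairing with $A_0\bar u$) controls $|u|_{L^2}$. Choosing $\gamma<\theta/4$ and $R$ large enough that the lower-order $|u|_{L^2}^2$ contributions and the cross term $\Re\lambda\,\Re\iprod{Ku,u_x}$ can be absorbed on the left produces the stated $H^1$ bound.

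For the $L^2$ bound with the $|\lambda|^{-1/2}$ factor, pair the reduced equation with $A_0\bar u$ directly:
\begin{equation*}
\lambda\,\iprod{A_0 u,u} = \iprod{A_0\cL u,u} + \iprod{A_0 F,u}.
\end{equation*}
Because $\cL$ is first order in $u$ with bounded coefficients and contains the bounded nonlocal term $\cJ$, the right-hand side is dominated by $C(|u|_{H^1}+|F|_{L^2})|u|_{L^2}$, hence $|\lambda|\,|u|_{L^2}\le C(|u|_{H^1}+|F|_{L^2})$. Substituting the $H^1$ bound just proved and noting $|\lambda|^{-1}\le R^{-1/2}|\lambda|^{-1/2}$ for $|\lambda|\ge R$ yields the claim. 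The principal technical obstacle is keeping variable-coefficient commutator errors and the nonlocal contributions from $\cK$ under control well enough to absorb them by taking $|\lambda|$ large, and ensuring that $K=K(U)$ can be treated with controlled $|U_x|$-errors; these are the same bookkeeping issues as in the scalar case of the companion paper \cite{LMNPZ1}, and go through with straightforward modifications thanks to the symmetrizer $A_0$ and the Shizuta--Kawashima matrix $K$ supplied by Lemma 1.1.
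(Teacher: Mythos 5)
Your proposal follows essentially the same route as the paper: the $H^1$ bound is obtained from the Friedrichs/Kawashima energy identities applied to the resolvent equation (which the paper phrases as the Laplace-transformed version of the $k=1$ nonlinear damping estimate of Proposition \ref{prop-damping}), and the $|\lambda|$-weighted $L^2$ bound is obtained by pairing the reduced equation with $A_0\bar u$ — the paper takes the imaginary part of $\lambda\iprod{u,A_0 u}$ while you take the full modulus, which is equivalent and in fact slightly sharper (yielding $|\lambda|^{-1}$ before you downgrade to $|\lambda|^{-1/2}$). One caution on ordering: you present the $H^1$ bound first and say you will choose $R$ large so that the $|u|_{L^2}^2$ contribution on the right can be absorbed, but this term carries no factor of $\lambda$ and can only be absorbed after invoking the $|\lambda|\,|u|_{L^2}\lesssim |u|_{H^1}+|F|_{L^2}$ estimate you derive afterward; the two inequalities should be stated first and then combined, as in the paper's Re-est/Im-est presentation, to avoid the apparent circularity.
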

\medskip

\begin{proof}
A Laplace transformed version of the nonlinear energy estimates
\eqref{damp-est} in Section \ref{sec:damping} with $k = 1$
(see \cite{Z7}, pp. 272--273, proof of Proposition 4.7 for further details) yields
\begin{equation}\label{Re-est}
	\begin{aligned}
		\Big(\Real\lambda+\frac{\gamma_1}{2}\Big)|u|_{H^1}^2\le C\Big(|u|_{L^2}^2
		+ |\varphi|_{H^1}^2+|\psi|_{L^2}^2\Big).
	\end{aligned}
\end{equation}
On the other hand, taking the imaginary part of the $L^2$ inner product of $U$
against $\lambda u = \cL u + \partial_xL\cK h + f$ and applying the Young's inequality,
we also obtain the standard estimate
\begin{equation}\label{Im-est}
	\begin{aligned}
		|\Imag\lambda||u|_{L^2}^2&\le |\iprod{\cL u,u}|
		+ |\iprod{L\cK \psi,u_x}| + |\iprod{\varphi,u}| \\
		&\le C \Big(|u|_{H^1}^2 + |\psi|_{L^2}^2 + |\varphi|_{L^2}^2\Big),
	\end{aligned}
\end{equation}
noting the fact that $\cL$ is a bounded operator from $H^1$ to $L^2$
and $\cK$ is bounded from $L^2$ to $H^1$.

Therefore, taking $\gamma=\gamma_1/4$, we obtain from \eqref{Re-est} and \eqref{Im-est}
\begin{equation*}
	|\lambda||u|_{L^2}^2 + |u|_{H^1}^2\le C \Big(|u|_{L^2}^2
	+ |\psi|_{L^2}^2 + |\varphi|_{H^1}^2\Big),
\end{equation*}
for any $\Real\lambda \ge -\gamma$.
Now take $R$ sufficiently large such that $|u|_{L^2}^2$ on the right  hand side
of the above can be absorbed into the left hand side for $|\lambda|\ge R$, thus yielding
\begin{equation*}
	|\lambda||u|_{L^2}^2 + |u|_{H^1}^2
	\le C \Big( |\psi|_{L^2}^2 + |\varphi|_{H^1}^2\Big),
\end{equation*}
for some large $C>0$, which gives the result as claimed.
\end{proof}

Next, we have the following
\medskip

\begin{proposition}[Mid-frequency bounds]\label{prop-resMF}
%CHANGED:
%Assuming as in our main theorem, we obtain
Under the assumptions of Theorem \ref{theo-main},
%ENDCHANGED
\begin{equation*}
	|(\lambda - \cL)^{-1}\varphi|_{L^2} \le C\,|\varphi|_{H^1}
	\quad \textrm{ for } \;
	R^{-1}\le |\lambda|\le R \mbox{ and }\Real\lambda \ge -\gamma,
\end{equation*}
for any $R$ and $C=C(R)$ sufficiently large and $\gamma = \gamma(R)>0$
sufficiently small.
\end{proposition}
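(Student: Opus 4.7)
The plan is a standard compactness/continuity argument exploiting the spectral stability hypothesis (D) together with the location of the essential spectrum established earlier in the abstract framework. The mid-frequency region $\Omega := \{R^{-1}\le |\lambda|\le R,\ \Re\lambda \ge -\gamma\}$ is compact in $\C$, so once we verify that $\Omega\subset \rho(\cL)$ for some $\gamma=\gamma(R)>0$, the analyticity of $\lambda \mapsto (\lambda-\cL)^{-1}$ on $\rho(\cL)$ in the $\mathcal{B}(H^1,L^2)$ operator norm will give, by continuity and compactness, a finite uniform bound $C(R)=\sup_{\lambda\in\Omega}\|(\lambda-\cL)^{-1}\|_{H^1\to L^2}$, which is the claimed estimate.

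First, I would verify $\Omega\subset \rho(\cL)$. By the abstract framework discussion following \eqref{iLT}, the essential spectrum of $\cL$ is confined to $\{\Re\lambda<0\}\cup\{0\}$ (a consequence of the consistent splitting analysis for the singular first-order system \eqref{eq:firsto}). By (D), the only point spectrum in $\{\Re\lambda\ge 0\}$ is the simple eigenvalue at $\lambda=0$, which is excluded from $\Omega$ because $|\lambda|\ge R^{-1}>0$. Hence $\Omega\cap\{\Re\lambda\ge 0\}\subset \rho(\cL)$. Since on the region of consistent splitting $\Lambda$ (which strictly contains $\{\Re\lambda\ge 0\}\setminus\{0\}$) eigenvalues coincide with zeros of the analytic Evans function $D_\pm$ constructed in Lemmas \ref{lem-Evansfns}--\ref{lemma-mD}, these eigenvalues are isolated and in particular do not accumulate on the imaginary axis within the compact annulus $\{R^{-1}\le |\lambda|\le R\}$; hence $\gamma=\gamma(R)>0$ may be chosen small enough that no eigenvalue lies in $\Omega\cap \{-\gamma\le \Re\lambda<0\}$.

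Second, I would note that for each fixed $\lambda_0\in\Omega$, the bounded invertibility of $(\lambda_0-\cL):\{u\in H^1 : \cL u \in L^2\}\to L^2$ already yields a bound $\|(\lambda_0-\cL)^{-1}\|_{H^1\to L^2}<\infty$. This may be seen directly by repeating, at the single value $\lambda_0$, the Laplace-transformed Kawashima-type damping computation underlying Proposition \ref{prop-resHF}: the inequalities \eqref{Re-est}--\eqref{Im-est} together with injectivity of $\lambda_0-\cL$ allow one to absorb the $|u|_{L^2}$ term on the right. The pointwise finiteness, together with analyticity of the resolvent on $\rho(\cL)$, implies continuity of the map $\lambda\mapsto \|(\lambda-\cL)^{-1}\|_{H^1\to L^2}$, and compactness of $\Omega$ then produces the desired uniform constant $C(R)$.

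The only real obstacle is the first step, i.e.\ ruling out accumulation of point spectrum on $\{\Re\lambda=0,\ R^{-1}\le |\lambda|\le R\}$ from the left half-plane. This is handled by the analyticity of the Evans functions $D_\pm$ on a neighborhood of $\{\Re\lambda\ge 0\}\setminus\{0\}$ from Lemmas \ref{lem-Evansfns}--\ref{lemma-mD}: the zero set is locally finite, so within any compact subset of the region of consistent splitting there are only finitely many eigenvalues, none of which can cluster on the imaginary axis.
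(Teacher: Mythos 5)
Your proposal is correct and follows essentially the same route as the paper, which gives a one-line proof by compactness of the mid-frequency region together with analyticity of the resolvent $(\lambda-\cL)^{-1}$ with respect to $\lambda$ (citing \cite{Z4} for details). You have simply filled in the details that the paper leaves implicit: that $\Omega$ lies in the resolvent set by the location of the essential spectrum, condition (D), and the isolation of Evans-function zeros on the compact set $\Omega$.
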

\medskip

\begin{proof}
Immediate, by compactness of the set of frequency under consideration together
with the fact that the resolvent $(\lambda-\cL)^{-1}$ is analytic with respect to $H^{1}$
in $\lambda$; see, for instance, \cite{Z4}.
\end{proof}
\medskip

With Propositions \ref{prop-resHF} and \ref{prop-resMF} in hand, we are now ready to give:
\medskip

\begin{proof}[Proof of Proposition \ref{prop-HFest}]
The proof starts with the following resolvent identity, using analyticity on the
resolvent set $\rho(\cL)$ of the resolvent $(\lambda-\cL)^{-1}$, for all  $\varphi\in \mathcal{D}(\cL)$,
\begin{equation*}
(\lambda-\cL)^{-1}\varphi=\lambda^{-1}(\lambda-\cL)^{-1}\cL \varphi+\lambda^{-1}\varphi.
\end{equation*}
Using this identity and \eqref{formS2}, we estimate
\begin{equation*}
	\begin{aligned}
		\cS_2(t)\varphi &=\frac{1}{2\pi i}\int_{-\gamma_1-i\infty}^{-\gamma_1+i\infty}
		\chi_{{}_{\{|\Imag\lambda|\geq\gamma_2\}}}e^{\lambda t}
		\lambda^{-1}(\lambda-\cL)^{-1}\cL\,\varphi\,d\lambda\\
		&\quad+\frac{1}{2\pi i}\int_{-\gamma_1-i\infty}^{-\gamma_1+i\infty}
		\chi_{{}_{\{|\Imag\lambda|\geq\gamma_2\}}} e^{\lambda t}\lambda^{-1}\varphi\,d\lambda\\
		&=:S_1 + S_2,
	\end{aligned}
\end{equation*}
where, by Propositions \ref{prop-HFest} and \ref{prop-resMF}, we have
\begin{equation*}
	\begin{aligned}
		|S_1|_{L^2}&\le C \int_{-\gamma_1-i\infty}^{-\gamma_1+i\infty}
		|\lambda|^{-1}e^{\Real \lambda t}|(\lambda-\cL)^{-1}\cL
		\varphi|_{L^2}|d\lambda|\\
		&\le C e^{-\gamma_1 t}\int_{-\gamma_1-i\infty}^{-\gamma_1+i\infty} |\lambda|^{-3/2}|\cL
		\varphi|_{H^1}|d\lambda|\\
		&\le C e^{-\gamma_1t}|\varphi|_{H^{2}}
	\end{aligned}
\end{equation*}
and
\begin{equation*}
	\begin{aligned}
	|S_2|_{L^2}&\leq\frac{1}{2\pi }\Big|\varphi\int_{-\gamma_1-i\infty}^{-\gamma_1+i\infty}
	\lambda^{-1}e^{\lambda t} d\lambda\Big|_{L^2}+ \frac{1}{2\pi }
	\Big|\varphi\int_{-\gamma_1-i r}^{-\gamma_1+i r}\lambda^{-1}e^{\lambda t} d\lambda \Big|_{L^2}\\
	&\leq Ce^{-\gamma_1 t} |\varphi|_{L^2},
\end{aligned}
\end{equation*}
by direct computations, noting that the integral in $\lambda$ in the first term is identically zero.
This completes the proof of the bound for the term involving $\varphi$ as stated in the proposition.
The estimate involving $\psi$ follows by observing that $L\,\partial_x \cK$ is bounded from $H^s$ to $H^s$.
Derivative bounds can be obtained similarly.
\end{proof}
\medskip

\begin{remark}\label{rem-HF}\rm
We note that in our treating the high-frequency terms by energy estimates (as also done
in \cite{KZ,NZ2,LMNPZ1}), we are ignoring the pointwise contribution there, which would also be
convected time-decaying delta functions.
To see these features, a simple exercise is to do the Fourier transform of the equations
about a constant state.
\end{remark}

%ENDCHANGED

%\section{Sketch of nonlinear argument}\label{sec:nonlinear}
\section{Nonlinear analysis}\label{sec:nonlinear}
In this section, we shall prove the main nonlinear stability
theorem. The proof follows exactly word by word as in the scalar case \cite{LMNPZ1}. We present its sketch here for sake of completeness. Define the nonlinear perturbation
\begin{equation}\label{per-variable}\begin{pmatrix}u\\q \end{pmatrix}(x,t): =
\begin{pmatrix}\tilde u\\\tilde q \end{pmatrix}(x+\alpha(t),t) -
\begin{pmatrix}U\\Q
\end{pmatrix}(x),
\end{equation}
where the shock location $\alpha(t)$ is to be determined later.

Plugging \eqref{per-variable} into \eqref{eq:systemold}, we obtain
the perturbation equation
%CHANGED
% \begin{equation}\label{perteqs}
%     \begin{cases}
%     u_{t}+ (au)_x  +L q_{x}=N_1(u)_x + \dot\alpha(t)(u_x+U_x)&\\
%      - q_{xx} +  q +(gu)_{x} =N_2(u)_x, &
%     \end{cases}
% \end{equation}
\begin{equation}\label{perteqs}
    \begin{aligned}
    u_{t}+ (Au)_x  +L q_{x} &= N_1(u)_x + \dot\alpha(t)(u_x+U_x),\\
     - q_{xx} +  q +(Bu)_{x} &= N_2(u)_x,
    \end{aligned}
\end{equation}
%ENDCHANGED
where $N_j(u)=O(|u|^2)$ so long as $u$ stays uniformly bounded.

We recall the Green function decomposition
\begin{equation}\label{Greendecomp}G(x,t;y) = G^I(x,t;y) +G^{II}(x,t;y)\end{equation} where
$G^I(x,t;y)$ is the low-frequency part. We further define as in
Proposition \ref{prop-greenbounds},
$$\widetilde G^I(x,t;y) = G^I(x,t;y) - E(x,t;y)  - R(x,t;y)$$
and $$\widetilde G^{II}(x,t;y) = G^{II}(x,t;y) + R(x,t;y).$$

Then, we immediately obtain the following from Lemmas
\ref{lem-estGI}, \ref{lem-estR} and Proposition \ref{prop-HFest}:
\begin{lemma}\label{lem-estGI+II} We obtain
\begin{equation} \label{est-tGI}
\Big|\int_{-\infty}^{+\infty} \partial_y^\beta\widetilde
G^I(\cdot,t;y) f(y)dy \Big|_{L^p} \le C (1+t)^{-\frac 12
(1/q-1/p)-|\beta|/2}|f|_{L^q},
\end{equation}
for all $1\le q\le p, \beta=0,1,$ and
\begin{equation}\label{est-tGII}
\Big|\int_{-\infty}^{+\infty}\widetilde G^{II}(x,t;y)f(y)dy
\Big|_{L^p} \le C e^{-\eta t}|f|_{H^3},
\end{equation} for all $2\le p\le \infty$.
\end{lemma}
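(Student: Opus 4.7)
The plan is to prove the two estimates by assembling results already available in the preceding sections; no new analysis is required, only careful bookkeeping of decomposition pieces together with Sobolev embeddings.

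For \eqref{est-tGI}, the bound is literally the content of Lemma \ref{lem-estGI}, so nothing further is required: the low-frequency remainder $\widetilde G^I$ has already been shown to satisfy exactly the $L^q \to L^p$ decay estimate with rate $(1+t)^{-\frac12(1/q-1/p)-|\beta|/2}$ and derivative loss $|\beta|$. The only additional observation is that Lemma \ref{lem-estGI} holds for $\beta = 0, 1$, which is what the statement asks.

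For \eqref{est-tGII}, I would split $\widetilde G^{II} = G^{II} + R$ and treat the two terms separately. The first piece $G^{II}$ is the kernel of the high-frequency portion $\cS_2(t)$ of the semigroup defined in \eqref{formS2}. Proposition \ref{prop-HFest} supplies the exponential bounds
$$\|\cS_2(t)f\|_{L^2}\le C e^{-\theta_1 t}\|f\|_{H^{2}},\qquad \|\partial_x \cS_2(t)f\|_{L^2}\le C e^{-\theta_1 t}\|f\|_{H^{3}}.$$
The Sobolev embedding $H^1 \hookrightarrow L^\infty$ then upgrades these two bounds to $\|\cS_2(t)f\|_{L^\infty}\le C e^{-\theta_1 t}\|f\|_{H^{3}}$, and interpolating between $L^2$ and $L^\infty$ yields the $L^p$ bound for all $2\le p\le \infty$ with the same exponential rate.

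The remaining piece $R$ is handled directly by Lemma \ref{lem-estR}, which gives
$$\Big|\int_{-\infty}^{+\infty}R(\cdot,t;y) f(y)\,dy \Big|_{L^p} \le C e^{-\eta t}(|f|_{L^p} + |f|_{L^\infty}) \le C e^{-\eta t}|f|_{H^{3}},$$
using Sobolev embedding $H^3 \hookrightarrow L^p \cap L^\infty$ for $p \ge 2$ in the last step. Combining the $G^{II}$ and $R$ contributions and taking the minimum of the two exponential decay rates produces \eqref{est-tGII}. I do not anticipate any serious obstacle, since the statement is essentially a packaging of the already-proved high-frequency bound and the $R$ estimate; the only minor technical point is invoking Sobolev embedding to absorb the mismatch between the $H^2$/$H^3$ data spaces supplied by Proposition \ref{prop-HFest} and the $L^\infty$ norm that appears in the Lemma \ref{lem-estR} bound.
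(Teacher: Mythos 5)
Your proposal is correct and follows the same approach as the paper: for \eqref{est-tGI} you simply invoke Lemma \ref{lem-estGI}, and for \eqref{est-tGII} you split $\widetilde G^{II}=G^{II}+R$, apply Proposition \ref{prop-HFest} and Lemma \ref{lem-estR}, and close with Sobolev embedding and $L^2$--$L^\infty$ interpolation, exactly as in the paper's (terser) proof.
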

\begin{proof} \eqref{est-tGI} is precisely the estimate
\eqref{estGI} in Lemma \ref{lem-estGI}, recalled here for our
convenience. \eqref{est-tGII} is a straightforward combination of
Lemma \ref{lem-estR} and Proposition \ref{prop-HFest}, followed by a
use of the interpolation inequality between $L^2$ and $L^\infty$ and
an application of the standard Sobolev imbedding. \end{proof}

We next show that by Duhamel's principle we have:
\begin{lemma} We obtain the {reduced integral
representation:}
\begin{equation}\label{int-rep}
\begin{aligned}
u(x,t)=& \int_{-\infty}^{+\infty} (\widetilde G^I+\widetilde G^{II})(x,t;y)u_0(y)dy \\
&- \int_0^t \int_{-\infty}^{+\infty} \widetilde G_y^I(x,t-s;y)
\Big(\partial_y L \cK N_2(u)  + N_1(u) + \dot\alpha(t)u\Big)(y,s)\, dy\, ds\\
&+ \int_0^t \int_{-\infty}^{+\infty}\widetilde G^{II}(x,t-s;y)
\Big(\partial_y L \cK N_2(u)  + N_1(u) +
\dot\alpha(t)u\Big)_y(y,s)\, dy\, ds,
%CHANGED
%\\q(x,t) =& (\cK \partial_x)( N_2(u)-gu) (x,t),
\\q(x,t) =& \, (\cK \partial_x)( N_2(u)-Bu) (x,t),
%ENDCHANGED
\end{aligned}
\end{equation}
and
\begin{equation}\label{alpha-rep}
\begin{aligned}
\alpha(t)=& -\int_{-\infty}^{+\infty}e_t(y,t)u_0(y)dy \\
&+ \int_0^t \int_{-\infty}^{+\infty}e_{y}(y,t-s)\Big(\partial_y L
\cK N_2(u) + N_1(u) + \dot\alpha(t)u\Big)(y,s)\, dy\, ds.
\end{aligned}
\end{equation}
\begin{equation}\label{alphader-rep}
\begin{aligned}
\dot\alpha(t)=& -\int_{-\infty}^{+\infty}e_t(y,t)u_0(y)dy \\
&+ \int_0^t \int_{-\infty}^{+\infty}e_{yt}(y,t-s)\Big(\partial_y L
\cK N_2(u) + N_1(u) + \dot\alpha(t)u\Big)(y,s)\, dy\, ds.
\end{aligned}
\end{equation}
\end{lemma}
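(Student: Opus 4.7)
The plan is a standard Duhamel-plus-shift argument. I apply Duhamel's principle to \eqref{perteqs}, substitute the refined decomposition $G = E + \widetilde G^I + \widetilde G^{II}$ (with $\widetilde G^{II} := G^{II} + R$ and $E, \widetilde G^I, R$ from Proposition \ref{prop-greenbounds}), and choose the phase $\alpha(t)$ so that the non-decaying translational component $\bar U_x(x)$ is absorbed exactly into the shift.

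First I rewrite the source of the $u$-equation as a total $y$-derivative plus a distinguished translational piece,
$$ S(y,s) = \partial_y F(y,s) + \dot\alpha(s)\,U_y(y),\qquad F := \partial_y L\cK N_2(u) + N_1(u) + \dot\alpha(s)\,u. $$
The translational piece is handled by the identity $\cL \bar U_x = 0$, which gives $\int G(x,t-s;y)\,U_y(y)\,dy = \bar U_x(x)$, so its Duhamel contribution collapses to $\bar U_x(x)\,\alpha(t)$ (using $\alpha(0)=0$). For the divergence piece I substitute $G = E + \widetilde G^I + \widetilde G^{II}$ and integrate by parts in $y$ on the $E$ and $\widetilde G^I$ parts to transfer the derivative onto the kernel (necessary in order to use the sharper decay bound \eqref{estGI}), while leaving the derivative on the source for the $\widetilde G^{II}$ part, since Lemma \ref{lem-estGI+II} only bounds $\widetilde G^{II}$ without derivatives and its exponential time decay is more than enough.

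Since $E(x,t;y) = \bar U_x(x)\sum_k \tilde V_{k,0}^-\,e_k(y,t)$, every term produced from $E$ factorizes as $\bar U_x(x)$ times a scalar functional of $u_0$ and $F$. Collecting these $\bar U_x(x)$-proportional contributions in the resulting expression for $u(x,t)$ and demanding that their coefficient vanish identically yields a scalar integral equation defining $\alpha(t)$, namely \eqref{alpha-rep}; differentiating in $t$ and using $e_k(y,0)\equiv 0$ to kill the boundary term produces \eqref{alphader-rep}. The non-$\bar U_x$ remainder assembles into \eqref{int-rep}.

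Finally, $q(x,t)$ is recovered algebraically by inverting $(-\partial_x^2 + 1)$ in the second equation of \eqref{perteqs}: $q = \cK\partial_x(N_2(u)-Bu)$. The only technicalities are the integrations by parts and the commutation of derivatives with integrals, both routine given the exponential decay of $\bar U_x$ at $\pm\infty$ together with the bounds on $G$ and $e_k$ from Proposition \ref{prop-greenbounds} and Lemmas \ref{lem-estGI}--\ref{lem-kernel-e}. The hard work of the analysis does not live here — it was already done in constructing $G$.
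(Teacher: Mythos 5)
Your proposal is correct and follows essentially the same route as the paper: apply Duhamel to \eqref{perteqs}, use $e^{\cL t}U_x = U_x$ to turn the $\dot\alpha(s)U_y$ contribution into $\alpha(t)U_x$, substitute $G = E + \widetilde G^I + \widetilde G^{II}$ with integration by parts in $y$ on the $E$ and $\widetilde G^I$ pieces, choose $\alpha(t)$ so the $\bar U_x(x)$-proportional terms cancel, and recover $q$ by inverting $-\partial_x^2 + 1$. The paper's own proof is terser (it simply writes the Duhamel identity and the definition of $\alpha$ and declares the rest immediate), but the mechanics you describe — where the integration by parts lands, why the $E$-terms factor through $\bar U_x$, and why $e_k(y,0)\equiv 0$ kills the boundary term when differentiating to get \eqref{alphader-rep} — are exactly the steps the paper is implicitly invoking.
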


\begin{proof} By Duhamel's principle and the fact that
$$\int_{-\infty}^{+\infty}G(x,t;y) U_x(y)dy = e^{\cL t} U_x(x) =
U_x(x),$$ we obtain
\begin{equation}
\begin{aligned}
u(x,t)=& \int_{-\infty}^{+\infty} G(x,t;y)u_0(y)dy \\
&+ \int_0^t \int_{-\infty}^{+\infty}G(x,t-s;y)
\Big(\partial_y L \cK N_2(u)  + N_1(u) + \dot\alpha(t)u\Big)_y(y,s)\, dy\, ds\\
&+\alpha(t)U_x.
\end{aligned}
\end{equation}
Thus, by defining the {\it instantaneous shock location:}
\begin{align*}
\alpha(t)=& -\int_{-\infty}^{+\infty}e_t(y,t)u_0(y)dy \\
&+ \int_0^t \int_{-\infty}^{+\infty}e_{y}(y,t-s)\Big(\partial_y L
\cK N_2(u) + N_1(u) + \dot\alpha(t)u\Big)(y,s)\, dy\, ds
\end{align*}
and using the Green function decomposition \eqref{Greendecomp}, we
easily obtain the integral representation as claimed in the lemma.
\end{proof}

With these preparations, we are now ready to prove the main theorem,
following the standard stability analysis of \cite{MaZ4,Z3,Z4}:
\begin{proof}[Proof of Theorem \ref{theo-main}]
Define \begin{equation}\label{zeta}
\begin{aligned}\zeta(t):=\sup_{0\le s\le t, 2\le p\le \infty}
\Big[|u(s)|_{L^p}&(1+s)^{\frac 12(1-1/p)}
+|\alpha(s)|+|\dot\alpha(s)|(1+s)^{1/2}\Big].\end{aligned}
\end{equation}

 We shall prove here that for all $t\ge
0$ for which a solution exists with $\zeta(t)$ uniformly bounded by
some fixed, sufficiently small constant, there holds
\begin{equation}\label{zeta-est}
\zeta(t) \le C(|u_0|_{L^1\cap H^s}+\zeta(t)^2) .\end{equation}

This bound together with continuity of $\zeta(t)$ implies that
\begin{equation}\label{zeta-est1} \zeta(t) \le 2C|u_0|_{L^1\cap H^s}\end{equation}
for $t\ge0$, provided that $|u_0|_{L^1\cap H^s}< 1/4C^2$. This would
complete the proof of the bounds as claimed in the theorem, and thus
give the main theorem.

By standard short-time theory/local well-posedness in $H^s$, and the
standard principle of continuation, there exists a solution $u\in
H^s$ on the open time-interval for which $|u|_{H^s}$ remains
bounded, and on this interval $\zeta(t)$ is well-defined and
continuous. Now, let $[0,T)$ be the maximal interval on which
$|u|_{H^s}$ remains strictly bounded by some fixed, sufficiently
small constant $\delta>0$. By Proposition \ref{prop-damping}, and
the Sobolev embeding inequality $|u|_{W^{2,\infty}}\le C|u|_{H^s}$,
$s\ge3$, we have
\begin{equation}\label{Hs}\begin{aligned}|u(t)|_{H^s}^2 &\le Ce^{-\theta t}|u_0|_{H^s}^2
+ C \int_0^t e^{-\theta(t-\tau)}\Big(|u(\tau)|_{L^2}^2+
|\dot\alpha|^2 \Big)d\tau\\&\le C(|u_0|_{H^s}^2
+\zeta(t)^2)(1+t)^{-1/2}.
\end{aligned}\end{equation}
and so the solution continues so long as $\zeta$ remains small, with
bound \eqref{zeta-est1}, yielding existence and the claimed bounds.

Thus, it remains to prove the claim \eqref{zeta-est}. First by
representation \eqref{int-rep} for $u$, for any $2\le p\le \infty$,
we obtain
\begin{equation}
\begin{aligned}
|u|_{L^p}(t)\le& \Big|\int_{-\infty}^{+\infty} (\widetilde G^I+\widetilde G^{II})(x,t;y)u_0(y)dy \Big|_{L^p}\\
&+ \int_0^t\Big| \int_{-\infty}^{+\infty} \widetilde G_y^I(x,t-s;y)
\Big(\partial_y L \cK N_2(u)  + N_1(u) + \dot\alpha(s)u\Big)(y,s)\, dy\Big|_{L^p} ds\\
&+ \int_0^t \Big|\int_{-\infty}^{+\infty}\widetilde G^{II}(x,t-s;y)
\Big(\partial_y L \cK N_2(u)  + N_1(u) +
\dot\alpha(t)u\Big)_y(y,s)\, dy\Big|_{L^p} ds\\
=&I_1 + I_2 + I_3,
\end{aligned}
\end{equation}
where estimates \eqref{est-tGI} and \eqref{est-tGII} yield
\begin{equation*}
\begin{aligned}
I_1&= \Big|\int_{-\infty}^{+\infty} (\widetilde G^I+\widetilde
G^{II})(x,t;y)u_0(y)dy \Big|_{L^p} \\&\le C(1+t)^{-\frac
12(1-1/p)}|u_0|_{L^1} + Ce^{-\eta t}|u_0|_{H^3} \\&\le
C(1+t)^{-\frac 12(1-1/p)}|u_0|_{L^1\cap H^3},
\end{aligned}
\end{equation*}
and, with noting that $\partial_y L \cK$ is bounded from $L^2$ to
$L^2$,
\begin{align*} I_2 &= \int_0^t\Big| \int_{-\infty}^{+\infty}
\widetilde G_y^I(x,t-s;y) \Big(\partial_y L \cK N_2(u)  + N_1(u) +
\dot\alpha(t)u\Big)(y,s)\, dy\Big|_{L^p} ds
\\&\le C \int_0^t(t-s)^{-\frac 12 (1/2-1/p)-1/2}(|u|_{L^\infty} + |\dot\alpha|)|u|_{L^2}(s)ds
\\&\le C \zeta(t)^2\int_0^t(t-s)^{-\frac 12 (1/2-1/p)-1/2}(1+s)^{-3/4}ds
\\&\le C \zeta(t)^2(1+t)^{-\frac 12 (1-1/p)},
\end{align*}
and, together with \eqref{Hs}, $s\ge 4$,
\begin{equation*}
\begin{aligned}
I_3&= \int_0^t \Big|\int_{-\infty}^{+\infty}\widetilde
G^{II}(x,t-s;y) \Big(\partial_y L \cK N_2(u)  + N_1(u) +
\dot\alpha(s)u\Big)_y(y,s)\, dy\Big|_{L^p} ds
\\&\le C\int_0^t e^{-\eta (t-s)} |\partial_y L \cK N_2(u)  + N_1(u) +
\dot\alpha(t)u|_{H^4}(s)ds\\&\le C\int_0^t e^{-\eta (t-s)}
(|u|_{H^s} + |\dot\alpha|)|u|_{H^s}(s)ds\\&\le C(|u_0|_{H^s}^2
+\zeta(t)^2)\int_0^t e^{-\eta (t-s)} (1+s)^{-1}ds
\\&\le C(|u_0|_{H^s}^2
+\zeta(t)^2)(1+t)^{-1}.
\end{aligned}
\end{equation*}

Thus, we have proved
\begin{equation}\label{finalest-u}|u(t)|_{L^p}(1+t)^{\frac
12(1-1/p)}\le C(|u_0|_{L^1\cap H^s} +\zeta(t)^2).\end{equation}

Similarly, using representations \eqref{alpha-rep} and
\eqref{alphader-rep} and the estimates in Lemma \ref{lem-kernel-e}
on the kernel $e(y,t)$, we can estimate (see, e.g., \cite{MaZ4,Z4}),
\begin{equation}\label{finalest-alpha}|\dot\alpha(t)|(1+t)^{1/2} +
|\alpha(t)|\le C(|u_0|_{L^1} +\zeta(t)^2).\end{equation}

This completes the proof of the claim \eqref{zeta-est}, and thus the
result for $u$ as claimed. To prove the result for $q$, we observe
that $\cK\partial_x$ is bounded from $L^p\to W^{1,p}$ for all $1\le
p\le \infty$, and thus from the representation \eqref{int-rep} for
$q$, we estimate
\begin{equation}\label{finalest-q}\begin{aligned} |q|_{W^{1,p}}(t)
&\le C(|N_2(u)|_{L^p}+ |u|_{L^p})(t) \\&\le C|u|_{L^p}(t)\le
C|u_0|_{L^1\cap
H^s}(1+t)^{-\frac12(1-1/p)}\end{aligned}\end{equation} and
\begin{equation}\label{finalest-qHs}\begin{aligned} |q|_{H^{s+1}}(t)
&\le C|u|_{H^s}(t)\le C|u_0|_{L^1\cap
H^s}(1+t)^{-1/4},\end{aligned}\end{equation} which complete the
proof of the main theorem. \end{proof}

%TODO: put this in??  I don't think we need it.. -KZ
%\section{Applications to radiative hydrodynamics}
%\label{sec:Euler}

\appendix

%TODO: put this in??  Again, I don't think we need it.. -KZ
%\section{Structure of radiative profiles}

\section{Spectral stability in the small-amplitude regime}

In this section we verify the spectral stability condition for small-amplitude profiles. Denoting $A = A(U(x))$, $B = B(U(x))$ we have the associated linearized spectral problem
\begin{equation}
\label{evalsyst}
\begin{aligned}
\lambda u + (Au)_x + Lq_x &=0,\\
-q_{xx} + q + (Bu)_x &= 0.
\end{aligned}
\end{equation}

Using the zero-mass conditions
\[
\int u \, dx = 0, \qquad \int q \, dx = 0,
\]
we recast system \eqref{evalsyst} in terms of the integrated coordinates, which we denote, again, as $u$ and $q$. The resulting system reads
\begin{eqnarray}
\lambda u + Au_x + Lq_x &=& 0, \label{k1}\\
-q_{xx} + q + Bu_x &=& 0. \label{k2}
\end{eqnarray}

In what follows we assume that the shocks are weak, that is, $u_\pm \in \mathcal{N}(u_*)$, being $\mathcal{N}$ a neighborhood of a certain state $u_*$, for which
\[
0 < \max_{u \in \mathcal{N}} |u-u_*| \leq \epsilon \ll 1,
\]
with $\epsilon > 0$ sufficiently small; clearly,
\[
|u_* - u_\pm|, |u_- - u_+| = \cO(\epsilon)
\]
and the shock profile for $U$ is approximately scalar, satisfying,
\begin{equation}
\begin{aligned}
U_x &= \cO(\epsilon^2) e^{-\eta \epsilon |x|}(r_p(u_*) + \cO(\epsilon)),\\
U_{xx} &=  \cO(\epsilon^3) e^{-\theta \epsilon |x|},
\end{aligned}
\end{equation}
for some $\theta,\eta > 0$. For the principal characteristic field $a_p := a_p(U(x))$ we have
%CHANGED
% \begin{equation}
% (a_p)_x := \cO(U_x) < 0, \quad \text{(monotonicity)},
% \end{equation}
% \[
% (a_p)_{xx} = \cO(U_{xx}).
% \]
\begin{align}
(a_p)_x &= \cO(U_x) < 0, \quad \text{(monotonicity)},\label{monotonicity}\\
(a_p)_{xx} &= \cO(U_{xx}). \nonumber
\end{align}
%ENDCHANGED

We shall make use of the following
\begin{lemma}
Under \eqref{S0} - \eqref{S2}, there exists a scalar function $\beta = \beta(u) > 0$, such that
\begin{equation}
\label{k17}
(A_0 L)^\top = \beta B,
\end{equation}
for all $u \in \cU$.
\end{lemma}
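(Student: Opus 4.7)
The plan is to exploit the rank-one symmetric structure of $A_0 L B$ provided by \eqref{S1}. The key algebraic observation is that a rank-one outer product $ab^\top$ of a column vector with a row vector is symmetric only when the two factors are parallel; the lemma is essentially this fact applied to $A_0 L B = (A_0 L) B$, which I would simply identify and then verify.

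First I would confirm that $(A_0 L) B$ is rank \emph{exactly} one: the rank of $A_0 LB$ is preserved under multiplication by the invertible symmetrizer $A_0$, and \eqref{S1} declares this rank to be one. Writing the $(i,j)$-entry of the product as $(A_0 L)_i B_j$, the symmetry hypothesis forces the pointwise identity $(A_0 L)_i B_j = (A_0 L)_j B_i$ for every pair $i,j$. Since $B$ cannot vanish identically (otherwise $A_0 LB=0$, contradicting rank one), I would fix locally an index $i_0$ with $B_{i_0}\neq 0$ and define
\[
\beta(u) := \frac{(A_0 L)_{i_0}(u)}{B_{i_0}(u)}.
\]
The symmetry relation then propagates componentwise to the vectorial identity $A_0 L = \beta B^\top$, and transposing (using $A_0^\top=A_0$) gives the claimed $(A_0 L)^\top = L^\top A_0 = \beta B$.

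Positivity $\beta>0$ would follow at once from the positive semi-definiteness part of \eqref{S1}: with $A_0 LB = \beta B^\top B$, testing against any vector $v$ with $Bv\neq 0$ gives $\beta |Bv|^2 \geq 0$ so $\beta\geq 0$, while $\beta=0$ would force $A_0 LB\equiv 0$, contradicting rank one. Smoothness of $\beta$ as a function of $u$ follows from \eqref{S0} and from the fact that the defining ratio is smooth on any open set where $B_{i_0}$ does not vanish; different choices of index $i_0$ must give the same value by the propagated identity, so the local definitions patch together into a single smooth positive function on $\cU$.

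I do not anticipate a serious obstacle here; the only mildly delicate point is the need to choose the normalizing index $i_0$ locally uniformly in $u$ to obtain global smoothness of $\beta$, and this is immediate from the nonvanishing of $B$ guaranteed by the rank-one and genuine-coupling assumptions \eqref{S1}--\eqref{S2}.
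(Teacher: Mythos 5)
Your argument is correct and is essentially the same as the paper's: both exploit that the rank-one symmetric matrix $A_0 L B = (A_0 L)\, B$ must have parallel column and row factors, define $\beta$ as the resulting scalar ratio, and extract positivity from positive semi-definiteness. You simply fill in more of the elementary linear algebra (the componentwise symmetry identity and the patching of the locally defined ratio) that the paper leaves implicit.
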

\begin{proof}
Follows by elementary linear algebra facts, since $A_0LB$ is positive semi-definite with rank one and can be written as $z \otimes w$, for some vectors $z$ and $w$. It follows the existence of a scalar $\beta$, such that $z = \beta w$; it is clearly nonzero and positive because of positive semi-definiteness of $A_0LB$.
\end{proof}
We start by providing some basic Friedrichs-type energy estimates.

\begin{lemma}
Assume $u,q$ and $\Re \lambda \geq 0$ solve \eqref{k1} - \eqref{k2}. If $\epsilon > 0$ is sufficiently small, then there hold the estimates
\begin{equation}
\label{friedRe}
(\Re \lambda) |u|^2_{L^2} + |q|^2_{L^2} + |q_x|_{L^2}^2 \leq C\int |U_x||u|^2 \, dx
\end{equation}
\begin{equation}
\label{friedIm}
|\Im \lambda| \int |U_x||u|^2 \, dx \leq C \int |U_x| \big(\delta |u|^2 + \delta^{-1} |q|^2 \big) \, dx
\end{equation}
for some $C > 0$ and any $\delta > 0$.
\end{lemma}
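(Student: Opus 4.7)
The plan is to derive both estimates by testing equation \eqref{k1} against the symmetrizer-weighted quantity $A_0 u$ in $L^2$ and testing equation \eqref{k2} against the scalar multiplier $\beta \bar q$, then separating real and imaginary parts. The bridge between the two equations is the structural identity $(A_0 L)^\top = \beta B$, with $\beta > 0$, just established, which makes the coupling term from the $u$-equation compatible with the $Bu_x$ term appearing in the $q$-equation.

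For \eqref{friedRe} I take the real part of $\iprod{\text{\eqref{k1}}, A_0 u}$ combined with the real part of $\iprod{\text{\eqref{k2}}, \beta q}$. The $\lambda$-term gives $(\Re\lambda)\iprod{A_0 u, u}$, comparable to $(\Re\lambda)|u|^2_{L^2}$ by positive definiteness of $A_0$. Since $A_0 A$ is real symmetric, an integration by parts gives $\Re\iprod{A_0 A u_x,u} = -\tfrac12\iprod{(A_0 A)_x u, u} = O\!\left(\int|U_x||u|^2\,dx\right)$. Rewriting the coupling term $\iprod{A_0 L q_x,u}$ via $(A_0 L)^\top = \beta B$ as $\int q_x \beta B\bar u\,dx$, integrating by parts, and substituting $Bu_x = q_{xx} - q$ from \eqref{k2} produces, in combination with the $\beta\bar q$-test of \eqref{k2}, the good definite terms $\int \beta(|q|^2 + |q_x|^2)\,dx$ plus cross-errors of order $\int |U_x|(|u||q| + |q||q_x|)\,dx$. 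Young's inequality together with smallness of $|U_x|$ (which is $O(\epsilon^2)$) and the uniform positive lower bound on $\beta$ on the profile range absorbs the errors into the good terms, delivering \eqref{friedRe}.

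For \eqref{friedIm} I take the imaginary part of the same identity $\iprod{\text{\eqref{k1}}, A_0 u} = 0$. Since $\iprod{A_0 u, u}$ is real, the left side yields $(\Im\lambda)\iprod{A_0 u, u}$, while the remaining terms $\Im\iprod{A_0 A u_x, u} + \Im\iprod{A_0 L q_x, u}$ must be controlled by weighted quantities of the form $\int |U_x|(|u||q|)\,dx$. Using the substitution $Bu_x = q_{xx} - q$ from \eqref{k2} together with $(A_0 L)^\top = \beta B$, every $u_x$-factor appearing in the imaginary part can be converted into $q$-dependence modulo commutator terms that carry coefficient derivatives $(A_0 A)_x$, $\beta_x$, $B_x$, each of order $|U_x|$. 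Localizing against the $|U_x|$-weight on the right-hand side and applying the Cauchy--Schwarz inequality with the free parameter $\delta > 0$ then produces the desired bound.

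The main obstacle is the imaginary-part analysis: unlike the real part, $\Im\iprod{A_0 A u_x, u}$ does \emph{not} simplify under integration by parts alone, so I must exploit the coupling to $q$ through \eqref{k2} in order to convert the problematic $u_x$-dependence into $q$-dependence before bounds of the expected form $\int |U_x||u||q|\,dx$ emerge. The delicate step is confirming that every commutator produced by the manipulations carries a factor $|U_x|$ (from $(A_0 A)_x$, $\beta_x$, or $B_x$) rather than an unweighted contribution, so that the final bound has the claimed $|U_x|$-localized structure.
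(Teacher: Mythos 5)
Your derivation of \eqref{friedRe} is sound and is essentially the paper's argument in thinly disguised form: the paper tests \eqref{k1} against $A_0 u$ and separately tests $\beta\cdot\eqref{k2}$ against $q$, then adds so that the coupling term $\Re\iprod{u,\bL q_x}$ cancels; you instead substitute $Bu_x=q_{xx}-q$ directly into the rewritten coupling term. These are the same computation carried out in a slightly different order, and both yield the good definite terms $|q|_{L^2}^2$, $|q_x|_{L^2}^2$ together with $O(|U_x|)$-weighted error terms.

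Your argument for \eqref{friedIm}, however, has a genuine gap. Two related problems. First, the quantity produced by taking $\Im\iprod{A_0\eqref{k1},u}$ is $(\Im\lambda)\iprod{A_0 u,u}\sim(\Im\lambda)|u|_{L^2}^2$, which is \emph{not} the left-hand side $|\Im\lambda|\int|U_x||u|^2\,dx$ of \eqref{friedIm}; since $|U_x|=O(\epsilon^2)$, the quantity you would need to bound is a factor $\epsilon^{-2}$ larger than what the lemma asserts, so your plan proves a much stronger (and, as far as I can see, false) inequality. Second, the mechanism you propose for controlling $\Im\iprod{\bA u_x,u}$ — converting ``every $u_x$-factor'' to $q$-dependence via $Bu_x=q_{xx}-q$ and $(A_0L)^\top=\beta B$ — cannot work structurally: $\bA=A_0A$ is full rank, so $\bA u_x$ depends on all $n$ components of $u_x$, whereas \eqref{k2} only constrains the rank-one combination $Bu_x$. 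The remaining $n-1$ directions of $u_x$ (the kernel of $B$) carry no $q$-information, and unlike the real part (where $\Re\iprod{\bA u_x,u}=-\tfrac12\iprod{\bA_x u,u}=O(\int|U_x||u|^2)$ by symmetry of $\bA$ and integration by parts), the imaginary part $\Im\iprod{\bA u_x,u}$ is genuinely a phase-gradient quantity — it involves $\int (\Re u)_x^\top\bA(\Im u)\,dx$ — that integration by parts alone cannot reduce to $|U_x|$-weighted, $u_x$-free quantities. You correctly identify this as the delicate step, but the claim that ``every commutator produced by the manipulations carries a factor $|U_x|$'' is not justified and I do not believe it is true as stated. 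A correct derivation of \eqref{friedIm} must involve some additional multiplier or weight adapted to the $|U_x|$ localization on the left-hand side, rather than the unweighted $A_0$-inner-product alone.
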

\begin{proof}
Multiply \eqref{k1} by $A_0 := A_0(U(x))$ and take the complex $L^2$ product against $u$; taking its real part and denoting
\[
\bA := (A_0A((U(x)), \qquad \bL := A_0(U(x))L,
\]
we obtain
\[
(\Re \lambda)\langle u,A_0u \rangle + \Re \langle u, \bA u_x \rangle + \Re \langle u, \bL q_x \rangle = 0.
\]
Using symmetry of $\bA$ and integrating by parts we get
\begin{equation}
\label{k16}
(\Re \lambda)\langle u,A_0u \rangle - \half \Re \langle u, \bA_x u \rangle + \Re \langle u, \bL q_x \rangle = 0.
\end{equation}

Multiply \eqref{k2} by $\beta := \beta(U(x))$, use \eqref{k17}, take the $L^2$ product against $q$, integrate by parts and take its real part. This yields
\begin{equation}
\label{k20}
c^{-1} |q_x|^2_{L^2} + c^{-1} |q|^2_{L^2} + \Re \langle q, \beta_x q \rangle - \Re \langle u, \bL q_x \rangle - \Re \langle \bL_x q, u \rangle = 0,
\end{equation}
because $\beta \geq c^{-1} > 0$. Since the error terms can be absorbed
\[
\beta_x, \bL_x = \cO(|U_x|) = \cO(\epsilon^2),
\]
for $\epsilon$ sufficiently small, and since $A_0$ is positive definite, we obtain inequality \eqref{friedRe}. Inequality \eqref{friedIm} follows in a similar fashion, with the parameter $\delta$ arising after application of Young's inequality.
\end{proof}

\begin{corollary}
There hold the estimates
\begin{equation}
0 \leq \Re \lambda \leq C\epsilon^2, \label{boundRe}
\end{equation}
\begin{equation}
|\Im \lambda| \leq C \epsilon, \label{boundIm}
\end{equation}
for some $C > 0$.
\end{corollary}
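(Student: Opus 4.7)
The plan is to read both bounds directly off the two Friedrichs-type inequalities \eqref{friedRe}--\eqref{friedIm} of the preceding lemma, exploiting amplitude smallness through the pointwise bound $|U_x| \le C\epsilon^2$ inherited from the small-amplitude profile asymptotics.

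First I would establish \eqref{boundRe}. Using $|U_x| \le C\epsilon^2$ in the right-hand side of \eqref{friedRe} gives
\[
(\Re \lambda)\,|u|^2_{L^2} \le C\int |U_x||u|^2\,dx \le C\epsilon^2 |u|^2_{L^2}.
\]
Any nontrivial eigenpair must have $u \not\equiv 0$: if $u \equiv 0$ then \eqref{k1} reduces to $Lq_x = 0$, hence $q_x \equiv 0$ (as $L$ is a fixed nonzero column vector), and then \eqref{k2} forces $q \equiv 0$. Dividing by $|u|^2_{L^2}$ and combining with the standing hypothesis $\Re \lambda \ge 0$ yields \eqref{boundRe}.

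For \eqref{boundIm} I would bootstrap the $q$-bound of \eqref{friedRe} into \eqref{friedIm}. Applying $|U_x| \le C\epsilon^2$ on the $q$-term and then the $L^2$-estimate $|q|^2_{L^2} \le C\int |U_x||u|^2\,dx$, we obtain
\[
|\Im \lambda| \int |U_x||u|^2 \, dx \le C\delta \int |U_x||u|^2 \, dx + C\delta^{-1}\epsilon^2 |q|^2_{L^2} \le \big(C\delta + C\delta^{-1}\epsilon^2\big)\int |U_x||u|^2 \, dx.
\]
Monotonicity \eqref{monotonicity} gives $|U_x| > 0$ pointwise on $\RR$, and continuity of $u$ then ensures $\int |U_x||u|^2\,dx > 0$ whenever $u \not\equiv 0$, so we may divide through and optimize in $\delta$. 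The balancing choice $\delta = \epsilon$ produces $|\Im \lambda| \le 2C\epsilon$, which is \eqref{boundIm}. The only point requiring care in the whole scheme is excluding the degenerate situations $u \equiv 0$ or $\int |U_x||u|^2 \, dx = 0$, both handled as above; beyond this the corollary is an immediate scaling consequence of the lemma, confining the unstable spectrum to a rectangle of height $\cO(\epsilon)$ and width $\cO(\epsilon^2)$.
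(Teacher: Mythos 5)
Your argument is correct and essentially identical to the paper's: you obtain \eqref{boundRe} by bounding $\int|U_x||u|^2\,dx\le C\epsilon^2|u|^2_{L^2}$ in \eqref{friedRe}, and \eqref{boundIm} by feeding the $|q|^2_{L^2}$ bound of \eqref{friedRe} into \eqref{friedIm} and choosing $\delta=\epsilon$, which is exactly the paper's route. Your explicit check that $u\not\equiv 0$ and $\int|U_x||u|^2\,dx>0$ for a nontrivial eigenpair (via $L\neq 0$ and monotonicity of the profile) is a careful addition that the paper leaves implicit but does not alter the approach.
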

\begin{proof}
Estimate \eqref{boundRe} follows immediately from \eqref{friedRe}. Taking $\delta = \epsilon > 0$ in \eqref{friedIm}, and using \eqref{friedRe} to control $|q|^2_{L^2}$ we can easily obtain
\[
(|\Im \lambda| - C\epsilon)\int |U_x||u|^2 \leq 0,
\]
yielding \eqref{boundIm}.
\end{proof}

\subsection{Kawashima-type estimate}

Next we carry out an energy estimate for $u_x$ of Kawashima-type (see \cite{HuZ1,MaZ6}).

\begin{lemma}
For each $\Re \lambda \geq 0$, $\lambda \neq 0$, there holds
\begin{equation}
\label{k39}
|u_x|_{L^2}^2 \leq \bar C \big( (\Re \lambda) \eta |u|^2_{L^2} + \int |U_x||u|^2 \, dx \big),
\end{equation}
for some $\bar C > 0$ and $\eta > 0$ with $\epsilon^2 / \eta$ sufficiently small.
\end{lemma}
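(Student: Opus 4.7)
The plan is to carry out a Kawashima-type energy estimate using the skew-symmetric compensator $K = K(U(x))$ from Lemma \ref{lemmaKaw}, which satisfies the pointwise matrix inequality $\Re(KA + A_0 L B) \ge \theta_0 I$ for some $\theta_0 > 0$. Multiplying \eqref{k1} on the left by $K$, pairing with $u_x$ in the complex $L^2$ sense, and taking real parts yields
\[ \Re \iprod{K A u_x, u_x} = -\Re\bigl(\lambda \iprod{K u, u_x}\bigr) - \Re\iprod{KL q_x, u_x}. \]
Combining this identity with the Kawashima inequality \eqref{KALB} and the symmetry of $A_0 L B$ from \eqref{S1} produces the starting bound
\[ \theta_0 |u_x|_{L^2}^2 \le -\Re\bigl(\lambda \iprod{K u, u_x}\bigr) - \Re\iprod{KLq_x, u_x} + \int \beta |B u_x|^2 \, dx, \]
where $\beta$ is the positive scalar satisfying $(A_0 L)^\top = \beta B$ from \eqref{k17}.

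For the first term, I split $\lambda$ and $\iprod{Ku, u_x}$ into real and imaginary parts. Integration by parts shows $2i\,\Im \iprod{Ku, u_x} = -\int u^* K_x u \, dx$, which is purely imaginary since $K_x$ inherits skew-symmetry from $K$; combined with $|K_x| = \cO(|U_x|)$ this gives $|\Im \iprod{Ku, u_x}| \le C \int |U_x| |u|^2 dx$, and then $|\Im \lambda| \le C\epsilon$ from \eqref{boundIm} absorbs this contribution into the target. The real part I handle by Young's inequality with parameter tied to $\eta$,
\[ |\Re\lambda| \cdot |\Re \iprod{Ku, u_x}| \le \frac{\Re \lambda}{4\eta} |u_x|_{L^2}^2 + C \eta (\Re \lambda) |u|_{L^2}^2, \]
and $\Re\lambda \le C \epsilon^2$ from \eqref{boundRe} together with the hypothesis $\epsilon^2/\eta$ sufficiently small makes the $|u_x|^2$ contribution here absorbable. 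The cross term $\Re \iprod{KL q_x, u_x}$ is dominated by $\delta |u_x|^2 + (C/\delta) |q_x|^2_{L^2}$, and \eqref{friedRe} converts $|q_x|^2_{L^2}$ to $C \int |U_x||u|^2$.

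The principal obstacle is the $A_0 LB$-contribution $\int \beta |Bu_x|^2 dx$, since $|q_{xx}|_{L^2}$ is not directly controlled by \eqref{friedRe}. To handle it I differentiate \eqref{k1} in $x$ and substitute $q_{xx} = q + B u_x$ from \eqref{k2}, obtaining $A u_{xx} = -(\lambda + A_x + LB) u_x - L q$. Multiplying by $A_0$ and pairing with $u_x$, the crucial observation is that the leading second-derivative term integrates by parts cleanly thanks to the symmetry of $\bA = A_0 A$ from \eqref{S1}:
\[ \Re \iprod{\bA u_{xx}, u_x} = -\tfrac12 \int u_x^* \bA_x u_x \, dx, \]
which is bounded in magnitude by $C|\bA_x|_\infty |u_x|^2_{L^2} = \cO(\epsilon^2)|u_x|_{L^2}^2$. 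Solving the resulting identity for $\iprod{A_0 LB u_x, u_x} = \int \beta |B u_x|^2$, using $\Re\lambda = \cO(\epsilon^2)$ and $|A_x| = \cO(\epsilon^2)$, and applying Young's inequality plus \eqref{friedRe} on the remaining $\iprod{\bL q, u_x}$ term, yields
\[ \int \beta |B u_x|^2 \, dx \le C \epsilon^2 |u_x|_{L^2}^2 + \delta |u_x|_{L^2}^2 + \frac{C}{\delta} \int |U_x| |u|^2 \, dx. \]

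Assembling all estimates, the coefficient of $|u_x|_{L^2}^2$ on the right-hand side is a sum of contributions $\Re\lambda/(4\eta) = \cO(\epsilon^2/\eta)$, $\cO(\epsilon^2)$, and small multiples of $\delta$. Fixing each $\delta$ as a small fraction of $\theta_0$, and invoking both the smallness of $\epsilon$ and the hypothesis $\epsilon^2/\eta$ sufficiently small, all these contributions are absorbed into $\theta_0 |u_x|_{L^2}^2$ on the left, leaving the claimed bound $|u_x|_{L^2}^2 \le \bar C\bigl((\Re\lambda)\eta|u|_{L^2}^2 + \int |U_x||u|^2 \, dx\bigr)$.
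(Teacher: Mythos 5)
Your proof is correct and takes essentially the same approach as the paper: a Kawashima-type estimate obtained by pairing $K$ times \eqref{k1} against $u_x$, invoking \eqref{KALB}, and then using a second-order manipulation of \eqref{k1}--\eqref{k2} together with the symmetry of $\bA=A_0A$ to control the residual term $\iprod{\bL B u_x, u_x}=\int\beta|Bu_x|^2$. The only difference is organizational: you differentiate \eqref{k1} in $x$ and substitute $q_{xx}=q+Bu_x$ from \eqref{k2} in one step before pairing with $u_x$, whereas the paper first pairs $\bL\times$\eqref{k2} against $u_x$ to produce a $\langle u_{xx},\bL q_x\rangle$ term and then eliminates it via $A_0\times$\eqref{k1} paired with $u_{xx}$ — the underlying algebra and resulting bound are the same.
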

\begin{proof}
Denote $K = K(U(x))$, and take the real part of the $L^2$ product of $Ku_x$ against \eqref{k1}. Since $K$ is skew-symmetric, the result is
\begin{equation}
\label{k27}
\Re \langle u_x, KA u_x \rangle = \Re (\lambda \langle Ku_x, u \rangle ) + \Re \langle Ku_x, Lq_x \rangle .
\end{equation}

Noticing also that $\Im \langle Ku_x, u\rangle = -\half \langle K_x u, u \rangle$, we obtain the bound
\begin{equation}
\label{k28}
\Re (\lambda \langle Ku_x, u \rangle ) \leq C (\Re \lambda) \big( \eta^{-1} |u_x|^2_{L^2} + \eta |u|_{L^2}^2 \big) + C |\Im \lambda| \int |U_x||u|^2 \, dx,
\end{equation}
for any $\eta > 0$ and some $C>0$. We also have the estimate
\begin{equation}
\label{k29bis}
\langle Ku_x, Lq_x \rangle \leq C \big( \delta_1 |u_x|_{L^2}^2 + \delta_1^{-1} |q_x|_{L^2}^2 \big),
\end{equation}
for any $\delta_1 > 0$, where we have used Young's inequality in both estimates.

To estimate $\Re \langle u_x, KA u_x \rangle$, observe that from \eqref{KALB}, there holds
\begin{equation}
\label{k30}
\Re \langle u_x, KA u_x \rangle + \langle u_x, \bL B u_x \rangle \geq c^{-1}|u_x|_{L^2}^2,
\end{equation}
for some $c > 0$. (Notice that $\langle u_x, \bL B u_x \rangle \in \R$ because $\bL B$ is symmetric, positive semi-definite.)

Multiply equation \eqref{k2} by $\bL$, take the $L^2$ product with $u_x$ and integarte by parts. This yields,
\begin{equation}
\label{k34}
\langle u_x, \bL B u_x \rangle = -\langle u_{xx}, \bL q_x \rangle - \langle u_x, \bL_x q_x \rangle - \langle u_x, \bL q \rangle.
\end{equation}

To estimate the first term, take the real part of the $L^2$ product of $u_{xx}$ against $A_0$ times \eqref{k1}, use $\bA$ symmetric, $A_0$ positive definite, and integrate by parts to obtain
\begin{equation}
\label{labuena}
\begin{aligned}
-\Re \langle u_{xx}, \bL q_x \rangle &\leq - \Re ( \lambda \langle u_x, (A_0)_x u \rangle ) + \half \langle u_x, \bA_x u_x \rangle - \Re \langle u_x, \bA_x u \rangle \\ &\leq -(\Re \lambda) \Re \langle u_x, (A_0)_x u \rangle + (\Im \lambda)\Im \langle u_x, (A_0)_x u \rangle  + \\ &\quad + \half \langle u_x, \bA_x u_x \rangle - \Re \langle u_x, \bA_x u \rangle.
\end{aligned}
\end{equation}
Using \eqref{friedRe} and \eqref{friedIm}, and bounding the error terms $(A_0)_x, \bA_x = \cO(|U_x|) = \cO(\epsilon^2)$, we get
\begin{equation}
\label{k35}
-\Re \langle u_{xx}, \bL q_x \rangle \leq C\epsilon \int |U_x||u|^2 \, dx + C\epsilon |u_x|_{L^2}^2,
\end{equation}
where the term $\langle u_x, \bA_x u \rangle$ has been bounded by
\[
\begin{aligned}
\int |U_x||u||u_x| \, dx  &\leq \frac{C}{2} \Big( \int |U_x|^{3/2}|u|^2 \, dx + \int |U_x|^{1/2} |u_x|^2 \, dx \Big)\\
&\leq \frac{C}{2} \epsilon \int |U_x||u|^2 \, dx + \frac{C}{2} \epsilon |u_x|^2_{L^2}.
\end{aligned}
\]

We also estimate
\begin{align}
\Re \langle u_x, \bL_x q_x \rangle &\leq C \epsilon^2 |u_x|^2_{L^2} + C |q_x|_{L^2}^2, \\
\Re \langle u_x, \bL q \rangle &\leq C \big( \delta_2 |u_x|^2_{L^2} + \delta_2^{-1} |q|^2_{L^2} \big),
\end{align}
for any $\delta_2 > 0$, using Young's inequality. Putting all together back into \eqref{k34} we get
\begin{equation}
\label{k38}
\langle u_x, \bL B u_x \rangle \leq C \epsilon |u_x|_{L^2}^2 + C \int |U_x||u|^2 \, dx,
\end{equation}
after using \eqref{friedRe}.

Finally, since $\Re \lambda = \cO(\epsilon^2)$, taking $\delta_2 = \epsilon$ and $\epsilon^2 / \eta$ sufficiently small, we can substitute \eqref{k38}, \eqref{k28} and \eqref{k29bis} back into \eqref{k30}, absorb the small terms into the left hand side to obtain \eqref{k39}. This proves the result.
\end{proof}

\begin{corollary}
For all $\epsilon > 0$ sufficiently small and $\Re \lambda \geq 0$, there holds the estimate
\begin{equation}
\label{k41}
(\Re \lambda) |u|_{L^2}^2 + |u_x|_{L^2}^2 \leq C \int |U_x||u|^2 \, dx,
\end{equation}
for some $C > 0$.
\end{corollary}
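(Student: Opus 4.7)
The corollary is an immediate synthesis of the two energy estimates already established, namely the Friedrichs-type bound \eqref{friedRe} and the Kawashima-type bound \eqref{k39}. The plan is simply to add them appropriately after using one to eliminate the $(\Re\lambda)|u|_{L^2}^2$ term on the right-hand side of \eqref{k39}.

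First I would invoke \eqref{friedRe}, which in particular gives
\[
(\Re\lambda)\,|u|_{L^2}^2 \;\le\; C\int |U_x|\,|u|^2\,dx,
\]
already controlling the first term on the left-hand side of the desired inequality by the right-hand side. Next, applying \eqref{k39} yields
\[
|u_x|_{L^2}^2 \;\le\; \bar C\,\eta\,(\Re\lambda)\,|u|_{L^2}^2 + \bar C\int|U_x|\,|u|^2\,dx,
\]
and substituting the preceding bound into the first term on the right gives
\[
|u_x|_{L^2}^2 \;\le\; (\bar C\,\eta\,C + \bar C)\int|U_x|\,|u|^2\,dx.
\]
Adding the two inequalities, one obtains \eqref{k41} with an absolute constant (where $\eta$ may be fixed once and for all subject to $\epsilon^2/\eta$ being small, as in the hypothesis of the previous lemma, which is compatible with $\epsilon$ sufficiently small).

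There is really no obstacle here: the work has all been done in the two preceding lemmas. The only mild bookkeeping point is to ensure the constants $\eta$ and $\epsilon$ are chosen compatibly — one fixes $\eta$ first (so that \eqref{k39} applies with some universal $\bar C$), and then takes $\epsilon$ small enough that $\epsilon^2/\eta$ satisfies the smallness condition required by the Kawashima estimate. With this ordering the constant $C$ in \eqref{k41} depends only on the structural constants of the system and not on $\lambda$, which is what is needed for the subsequent spectral argument ruling out unstable eigenvalues.
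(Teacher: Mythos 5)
Your proposal is correct and follows essentially the same route as the paper's proof: combining the Friedrichs estimate \eqref{friedRe} with the Kawashima estimate \eqref{k39}. The only cosmetic difference is that you substitute \eqref{friedRe} directly into the $(\Re\lambda)\eta|u|_{L^2}^2$ term on the right of \eqref{k39}, whereas the paper multiplies \eqref{friedRe} by $\bar C$, adds, and then absorbs that term into the left-hand side by taking $\eta$ small (e.g.\ $\eta=\cO(\epsilon)$); both variants deliver \eqref{k41} with a constant depending only on structural quantities, and your remark about the compatible ordering of $\eta$ and $\epsilon$ is the right bookkeeping point.
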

\begin{proof}
Take $\bar C$ times estimate \eqref{friedRe} and add to \eqref{k39} to obtain
\[
\bar C(\Re \lambda) |u|_{L^2}^2 + |u_x|_{L^2}^2 \leq \bar C(1+C) \int |u_x||u|^2 \, dx + \bar C (\Re \lambda) \eta |u|_{L^2}^2.
\]
Take $\eta$ sufficiently small, say $\eta = \cO(\epsilon)$ so that $\epsilon^2 /\eta$ remains small, and after absorbing into the left hand side  we obtain the result.
\end{proof}

\subsection{Goodman-type estimate}

Finally, we control the term $\int |U_x||u|^2$ by performing a weighted energy estimate in the spirit of Goodman \cite{Go1,Go3} (see also \cite{HuZ1,MaZ6}).

\begin{lemma}\label{lem-Goodman}
Under \eqref{S0} - \eqref{S2}, \eqref{H0} - \eqref{H3}, for all $\Re \lambda \geq 0$ there holds the estimate
\begin{equation}
\label{g30}
\Re \lambda \Big(|u|^2_{L^2} +|u_x|^2_{L^2}\Big)+ \hat C \int |U_x||u|^2 \, dx \leq \hat C \epsilon |u_x|^2_{L^2},
\end{equation}
for some $\hat C>0$ and all $\epsilon > 0$ sufficiently small.
\end{lemma}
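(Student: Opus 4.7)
My plan is to perform a Goodman-type weighted energy estimate \cite{Go1,Go3} (see also \cite{HuZ1,MaZ6}), exploiting the sign of $\bA_x$ in the principal characteristic direction---produced by the monotonicity \eqref{monotonicity} and the genuine nonlinearity \eqref{H2}---to obtain coercivity of the bulk term $\int|U_x||u|^2\,dx$ modulo $\epsilon|u_x|^2_{L^2}$. Combined with the Kawashima-type estimate \eqref{k41} (which controls $|u_x|^2_{L^2}$ by $\int|U_x||u|^2\,dx$), this will close the loop and yield the desired spectral-stability bound, since for $\epsilon$ small the two inequalities force both sides to vanish.

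Concretely, I would revisit the $A_0$-symmetrized inner product \eqref{k16} of equation \eqref{k1} against $u$, but now retain the sign of $-\tfrac12\Re\langle u,\bA_x u\rangle$ rather than absorbing it into $C\int|U_x||u|^2\,dx$ as in the proof of \eqref{friedRe}. Decomposing $u = v\,r_p(U)+u^\perp$ along the principal eigendirection, the key computation
\[
l_p(U)\,\bA_x(U)\,r_p(U) \;=\; (A_0)_{pp}(U)\bigl(\nabla a_p\cdot U_x\bigr) + \cO(\epsilon|U_x|),
\]
together with \eqref{H2} and \eqref{monotonicity}, gives
\[
-\tfrac12\Re\langle u,\bA_x u\rangle \;\ge\; \hat C\!\int|U_x||v|^2\,dx - C\epsilon\!\int|U_x||u|^2\,dx - C\!\int|U_x|\,|v|\,|u^\perp|\,dx,
\]
the cross term being split by Young's inequality. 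The radiation coupling $\Re\langle u,\bL q_x\rangle$ is treated exactly as in \eqref{k20}: apply \eqref{k17} and equation \eqref{k2} to rewrite $\bL q_x$ in terms of $Bu$, $q$ and $q_{xx}$, integrate by parts, and apply Young's inequality with a small parameter $\delta=\cO(\epsilon)$, so that the residual $|u_x|$-terms produce exactly the admissible $\hat C\epsilon|u_x|^2_{L^2}$ on the right, while the $q$-norms are controlled by \eqref{friedRe} and absorbed into the coercive bulk. The transverse piece $\int|U_x||u^\perp|^2\,dx$ is handled via the Shizuta--Kawashima structure \eqref{S2} and positive diffusion \eqref{H3}: because the dissipation $A_0 LB$ acts precisely along $r_p$, while the skew-symmetric compensator $K$ from \eqref{KALB} supplies strict dissipation transversally, the transverse component is a hyperbolic mode, and a computation parallel to the derivation of \eqref{k39} for $u^\perp$ alone yields $\int|U_x||u^\perp|^2\,dx\le\cO(\epsilon)|u_x|^2_{L^2}$, again absorbable. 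Finally, the $\Re\lambda|u_x|^2_{L^2}$ contribution on the left is obtained essentially for free via \eqref{boundRe}, which gives $\Re\lambda|u_x|^2_{L^2}\le C\epsilon^2|u_x|^2_{L^2}$, comfortably subordinate to $\hat C\epsilon|u_x|^2_{L^2}$.

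The main obstacle is the quantitative control of the transverse coupling uniformly through the sonic point $x=0$, where $a_p(U(x))$ vanishes while $|U_x|$ attains its maximum: the error terms $\cO(\epsilon|U_x|)$ coming from off-principal projections of $\bA_x$, from $(A_0)_x$, and from the non-local $q_x$ coupling must all be shown to be genuinely subordinate to the coercive $|U_x||v|^2$ piece with constants uniform across the degeneracy. This is precisely the point at which small amplitude, together with \eqref{H2} and the Shizuta--Kawashima hypothesis \eqref{S2}--\eqref{H3}, must be combined quantitatively, as in the scalar Goodman computation of \cite{LMNPZ1} and the weighted energy estimates of \cite{HuZ1,MaZ6} for relaxation and viscous shocks.
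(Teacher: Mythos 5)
Your outline correctly identifies the principal-mode coercivity coming from monotonicity \eqref{monotonicity} and genuine nonlinearity \eqref{H2}, and your treatment of the radiation coupling via \eqref{k17}, \eqref{k2} and Young's inequality is a reasonable alternative to what the paper actually does (the paper instead cancels $\Re\iprod{\tilde L q_x,v}$ between a $v$-level and a $v_x$-level estimate, Propositions~\ref{prop-0thest}--\ref{prop-1stest}). However, the proposal has a genuine gap in the treatment of the transverse modes, and this is where the Goodman construction---which you invoke by name but never actually carry out---is indispensable.

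You claim that $\int |U_x||u^\perp|^2\,dx \le \cO(\epsilon)|u_x|^2_{L^2}$ follows from the Shizuta--Kawashima structure \eqref{S2} and positive diffusion \eqref{H3}, ``by a computation parallel to \eqref{k39}.'' This cannot work. The Kawashima compensator $K$ together with \eqref{KALB} furnishes damping of the \emph{derivative} $u_x$: the estimate \eqref{k39} bounds $|u_x|^2_{L^2}$ \emph{by} $\int|U_x||u|^2\,dx$, i.e., it goes in the opposite direction from what you need. There is no mechanism by which genuine coupling produces a smallness of the weighted $L^2$-norm of the transverse component relative to $|u_x|^2_{L^2}$; indeed one may have $u^\perp$ nearly constant across the support of $|U_x|$ (which has width $\cO(1/\epsilon)$) with $u_x$ arbitrarily small, so no inequality of that form with a uniform small constant can hold. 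What is really happening is that the transverse fields are outgoing hyperbolic (transport) modes, and $\bA_x$ has \emph{no favorable sign} on them. The paper's proof introduces precisely for this reason the exponential weight matrix $S$ in \eqref{weightS}, with blocks $\phi_\pm$ solving $\phi_\pm'=\mp c_*|U_x|\phi_\pm$ for $c_*$ large, so that after conjugation the transverse blocks of $\tilde A_x + (S_x/S)\tilde A$ in \eqref{anest-S} become $\le -c_*|U_x|I$ by pure transport along the outgoing characteristics, without invoking any dissipation. Your proposal omits this weight entirely, and the argument does not close without it. (The final assembly---adding the Kawashima estimate \eqref{k41} and taking $\epsilon$ small---and the observation that $\Re\lambda|u_x|^2_{L^2}$ is harmless by \eqref{boundRe}, are both fine; the problem is upstream of that.)
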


%%% Toan's proof:

%CHANGED
% We first recall that there are matrices $l,r$ diagonalizing
% matrix $A$ such that \begin{equation}\label{diag-a}\tilde A := l A r = \begin{pmatrix} a_- &&0\\&a_p&\\
% 0&&a_+\end{pmatrix}\end{equation} where $a_\pm$ are symmetric and
% positive/negative definite, and $a_p$ is scalar satisfying $a'_p <0$
% and $a_p = \cO(|u_+-u_-|)$.
We first recall that there are matrices $L_p,R_p$ diagonalizing
matrix $A$ such that
\begin{equation}
\label{diag-a}
%CHANGED
% \tilde A := L_p A R_p = \begin{pmatrix} A_- &&0\\&a_p&\\
% 0&&A_+\end{pmatrix}
\tilde A := L_p A R_p = \begin{pmatrix} A_1^- &&0\\&a_p&\\
0&&A_2^+\end{pmatrix}
%ENDCHANGED
\end{equation}
where $A_\pm$ are symmetric and
positive/negative definite, and $a_p$ is scalar satisfying \eqref{monotonicity}
and $a_p = \cO(\epsilon)$.
%ENDCHANGED
%CHANGED
% Defining $v:=lu$, we can rewrite \eqref{evalsyst} as
%  \begin{equation}
%      \begin{cases}
%      \lambda v + \tilde A v_x +\tilde Lq_{x}=\tilde A l'rv&\\
%  -q_{xx} + q +\tilde B v_{x} =-B r' v &
%     \end{cases}
%     \label{e-eqsv}
% \end{equation}
% where $$\tilde A = lAr, \qquad\tilde L = lL, \qquad\tilde B =
% Br.$$
%
% Define \begin{equation}S = \begin{pmatrix}\phi_-
% Id_{p-1}&&0\\&1&\\0&&\phi_+ Id_{n-p}\end{pmatrix}\end{equation} where block
% diagonal form is in the same way as of \eqref{diag-a} and $\phi_\pm
% $ are scalar functions which are bounded away from zero and
% satisfying
% $$\phi'_\pm = \mp c_*|\bar U_x|\phi_\pm, \qquad \phi_\pm (0)=1$$ for some
% sufficiently large constant $c_*$ to be determined later.
Defining $v:=L_pu$, we rewrite \eqref{evalsyst} as
 \begin{equation}
     \begin{aligned}
     \lambda v + \tilde A v_x +\tilde Lq_{x} &= \tilde A (L_p)_x R_p v,\\
 -q_{xx} + q +\tilde B v_{x} &=-B (R_p)_x v,
    \end{aligned}
    \label{e-eqsv}
\end{equation}
where $$\tilde A = L_pAR_p, \qquad\tilde L = L_pL, \qquad\tilde B =
BR_p.$$
Define \begin{equation}\label{weightS}S := \begin{pmatrix}\phi_-
I_{p-1}&&0\\&1&\\0&&\phi_+ I_{n-p}\end{pmatrix}\end{equation} where block
diagonal form is in the same way as of \eqref{diag-a} and $\phi_\pm
$ are scalar functions of $x \in \R$ which are bounded away from zero and
satisfying
$$\phi'_\pm = \mp c_*|U_x|\phi_\pm, \qquad \phi_\pm (0)=1$$ for some
sufficiently large constant $c_*$ to be determined later.
Once again, we alternatively write $'$ or $(\cdot)_x$ as derivative with respect to $x$.
%ENDCHANGED

In what follows, we shall use $\iprod{\cdot,\cdot}$ as a weighted
norm defined by
$$\iprod{f,f} := \iprod{Sf,f}_{L^2}.$$
With this inner product, we note that for any symmetric matrix $A$,
$$\iprod{Af_x,f} = -\frac 12 \iprod{(A_x + (S_x/S)A)f,f}$$ where
$S_x/S$ should be understood as $\phi'_\pm/\phi_\pm$ or $0$
in each corresponding block.

By our choice of $S$ and $\phi_\pm$, we observe that
%CHANGED
% \begin{equation}\label{anest-S}\begin{aligned}\tilde A_x + (S_x/S)\tilde A &= \begin{pmatrix}a'_-
% +(\phi'_-/\phi_-)a_-&&0\\&a'_p&\\0&&a'_+
% +(\phi'_+/\phi_+)a_+\end{pmatrix}
% \\&\le \begin{pmatrix}-c_*Id&&0\\&-\theta&\\0&&-c_*Id\end{pmatrix}|\bar U_x|\end{aligned}\end{equation}
% \begin{equation}\label{anest-S}\begin{aligned}\tilde A_x + (S_x/S)\tilde A &= \begin{pmatrix}A'_-
% +(\phi'_-/\phi_-)A_-&&0\\&a'_p&\\0&&A'_+
% +(\phi'_+/\phi_+)A_+\end{pmatrix}
% \\&\le \begin{pmatrix}-c_*I&&0\\&-\theta&\\0&&-c_*I\end{pmatrix}|U_x|\end{aligned}\end{equation}
\begin{equation}\label{anest-S}\begin{aligned}\tilde A_x + (S_x/S)\tilde A &= \begin{pmatrix}(A_1^-)'
+(\phi'_-/\phi_-)A_1^-&&0\\&a'_p&\\0&&(A_2^+)'
+(\phi'_+/\phi_+)A_2^+\end{pmatrix}
\\&\le \begin{pmatrix}-c_*I&&0\\&-\theta&\\0&&-c_*I\end{pmatrix}|U_x|\end{aligned}\end{equation}
%ENDCHANGED

\begin{proposition}\label{prop-0thest} Denoting $v =: (v_-,v_p,v_+)^\top$, we obtain
\begin{equation}\label{goodineq1}\begin{aligned}(\Re \lambda) \iprod{v,v}& + \frac
12c_*\iprod{|U_x|v_\pm,v_\pm} +\frac 12\theta\iprod{|U_x|v_p,v_p} \le -\Re\iprod{\tilde
Lq_x,v}.\end{aligned}\end{equation}
\end{proposition}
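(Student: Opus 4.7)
The plan is a weighted energy estimate, taking the $S$-weighted $L^2$ pairing of the first equation of \eqref{e-eqsv} against $v$ and extracting the real part. Since $\tilde A$ is block-diagonal with each block ($A_1^-$, $a_p$, $A_2^+$) symmetric, and the weight $S$ from \eqref{weightS} is diagonal with the same block decomposition, the product $S\tilde A$ is symmetric. A standard integration-by-parts argument then yields the identity already recalled before \eqref{anest-S},
$$\Re\iprod{\tilde A v_x,v}=-\tfrac12\iprod{(\tilde A_x+(S_x/S)\tilde A)v,v}.$$
Hence, taking the real part of the weighted pairing with $v$,
$$(\Re\lambda)\iprod{v,v}-\tfrac12\iprod{(\tilde A_x+(S_x/S)\tilde A)v,v}+\Re\iprod{\tilde Lq_x,v}=\Re\iprod{\tilde A(L_p)_xR_p\,v,v}.$$

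Next I would invoke the pointwise matrix bound \eqref{anest-S}, which was arranged precisely so that, as symmetric block-diagonal matrices,
$$\tilde A_x+(S_x/S)\tilde A\;\le\;-\,\mathrm{diag}(c_*I,\theta,c_*I)\,|U_x|.$$
Substituting this into the identity above gives
$$(\Re\lambda)\iprod{v,v}+\tfrac12 c_*\iprod{|U_x|v_\pm,v_\pm}+\tfrac12\theta\iprod{|U_x|v_p,v_p}\le -\Re\iprod{\tilde Lq_x,v}+\Re\iprod{\tilde A(L_p)_xR_p v,v},$$
so the only task left is to absorb the commutator error $\Re\iprod{\tilde A(L_p)_xR_p v,v}$ into the left-hand side.

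This absorption is the main (and only) technical obstacle. Because $L_p$ is smooth in $U$ and $U_x=\cO(\epsilon^2)e^{-\eta\epsilon|x|}$, each row of $\tilde A(L_p)_xR_p$ is of size $\cO(|U_x|)$; crucially, the middle ($p$-th) row carries an extra factor $a_p=\cO(\epsilon)$, which is exactly the smallness needed to overcome the fixed positive constant $\theta$ in the $v_p$ slot. I would split the error into block-row contributions, handle each diagonal piece directly and each cross piece by Young's inequality with a small parameter $\delta$: the cross terms $|v_\pm||v_p|$ arising from the outer block rows contribute $\delta$ to the $v_p$ bucket (absorbed into $\tfrac12\theta|U_x|v_p^2$ for $\delta$ small) and $\delta^{-1}$ to the $v_\pm$ bucket (absorbed into $\tfrac12 c_*|U_x|v_\pm^2$ once $c_*$ is chosen correspondingly large), while the middle-row contributions all carry the prefactor $\epsilon$ and are absorbed trivially. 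Provided $c_*$ is fixed large enough (depending only on structural constants, not on $\epsilon$) and $\epsilon$ is sufficiently small, every error piece is absorbed and we are left with \eqref{goodineq1}, possibly after rebranding the constants $c_*,\theta$ that multiply the left-hand-side weighted norms.
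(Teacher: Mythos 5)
Your proposal follows essentially the same route as the paper: take the $S$-weighted pairing of the first equation of \eqref{e-eqsv} against $v$, integrate by parts using the symmetry of $S\tilde A$ to produce the identity quoted before \eqref{anest-S}, apply the pointwise bound \eqref{anest-S}, and then absorb the commutator error $\Re\iprod{\tilde A(L_p)_x R_p\,v,v}$ by exploiting the block structure (the extra $a_p=\cO(\epsilon)$ factor in the $p$-th block), the smallness of $|U_x|$, and the largeness of $c_*$. You are in fact slightly more careful than the paper's one-line absorption: the paper asserts the bound $|\iprod{\tilde A(L_p)_xR_p v,v}|\le C\iprod{|U_x|v_\pm,v_\pm}+C\iprod{|a_p||U_x|v_p,v_p}$ directly, whereas the mixed contributions $\cO(|U_x|)v_\pm\bar v_p$ coming from the outer block-rows of $\tilde A(L_p)_xR_p$ do not automatically carry an $|a_p|$ factor; your explicit Young's-inequality split (small $\delta$ into the $v_p$ bucket, $\delta^{-1}$ into the $v_\pm$ bucket, then fix $\delta$ against $\theta$ and take $c_*$ large) is precisely the argument needed to make the absorption honest. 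This is a clarification of the same proof rather than a genuinely different method.
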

\begin{proof}
We take inner product in the weighted norm of the first equation of
\eqref{e-eqsv} against $v$, take the real part of the resulting
equation, and make use of integration by parts, yielding
%CHANGED
% \begin{equation}\label{key1}\begin{aligned}\Re \lambda \iprod{v,v}& - \iprod{(\tilde A_x +
% (S_x/S)\tilde A)_xv,v} =-\Re\iprod{\tilde Lq_x,v} + \Re\iprod{\tilde A
% l'rv,v}.\end{aligned}\end{equation}
% Noting that $l'r = \cO(|\bar U_x|)$ and the fact that
% $\tilde A$ has the diagonal block \eqref{diag-a}, we estimate
% $$|\iprod{\tilde A l'rv,v}| \le C\iprod{|\bar U_x|v_\pm,v_\pm} +C\iprod{|a_p||\bar U_x|v_p,v_p}.$$
\begin{equation}\label{key1}\begin{aligned}(\Re \lambda) \iprod{v,v}& - \iprod{(\tilde A_x +
(S_x/S)\tilde A)_xv,v} =-\Re\iprod{\tilde Lq_x,v} + \Re\iprod{\tilde A
(L_p' R_p v,v}.\end{aligned}\end{equation}
Noting that $L_p'R_p  = \cO(|U_x|)$ and the fact that
$\tilde A$ has the diagonal block \eqref{diag-a}, we estimate
$$|\iprod{\tilde A L_p' R_p v,v}| \le C\iprod{|U_x|v_\pm,v_\pm} +C\iprod{|a_p||U_x|v_p,v_p}.$$
%ENDCHANGED

Using this, \eqref{anest-S} and the fact that $|a_p| =
\cO(\epsilon)$ is sufficiently small and $c_*$ is sufficiently
large, \eqref{key1} immediately yields \eqref{goodineq1}.
\end{proof}

%CHANGED
% \begin{proposition}\label{prop-1stest} We obtain
% \begin{equation}\label{goodineq2}\Re\lambda \iprod{v_x,v_x} -\Re\iprod{\tilde
% Lq_x,v}
% \le C\iprod{\cO(|\bar U_x|^2)v,v}+\eta\iprod{v_x,v_x}\end{equation}
% for sufficiently small  $\eta>0$.\end{proposition}
\begin{proposition}\label{prop-1stest} We obtain
\begin{equation}\label{goodineq2}(\Re\lambda) \iprod{v_x,v_x} -\Re\iprod{\tilde
Lq_x,v}
\le C\iprod{\cO(|U_x|^2)v,v}+\eta\iprod{v_x,v_x}\end{equation}
for sufficiently small  $\eta>0$.\end{proposition}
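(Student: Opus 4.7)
The plan is to derive an $H^1$-level analogue of Proposition~\ref{prop-0thest} by differentiating the first equation of \eqref{e-eqsv} once in $x$, pairing the resulting identity against $v_x$ in the weighted inner product $\iprod{\cdot,\cdot}$, and taking the real part. Differentiation gives
\[
\lambda v_x + (\tilde A v_x)_x + \tilde L q_{xx} + \tilde L_x q_x = \bigl(\tilde A (L_p)_x R_p\,v\bigr)_x.
\]
Pairing with $v_x$ and using the symmetry of the blocks of $\tilde A$ together with the block-compatibility of $S$, an integration by parts on the $\iprod{\tilde A v_{xx},v_x}$ piece yields $\Re\iprod{\tilde A v_{xx},v_x}=-\tfrac12\iprod{(\tilde A_x+(S_x/S)\tilde A)v_x,v_x}$. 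By \eqref{anest-S}, this contributes the nonnegative quantity $\frac12\iprod{(-\tilde A_x-(S_x/S)\tilde A)v_x,v_x}\ge\frac{c_*}{2}\iprod{|U_x|(v_\pm)_x,(v_\pm)_x}+\frac{\theta}{2}\iprod{|U_x|(v_p)_x,(v_p)_x}$ on the left, which may be discarded (it carries the favorable sign for our upper bound).

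The key manipulation is the treatment of $-\Re\iprod{\tilde L q_{xx},v_x}$. Substituting $q_{xx}=q+\tilde B v_x+B(R_p)_x v$ from the second equation of \eqref{e-eqsv} produces three pieces; the first, $-\Re\iprod{\tilde L q,v_x}$, is then integrated by parts (throwing the derivative onto $q$), yielding
\[
-\Re\iprod{\tilde L q,v_x}=\Re\iprod{\tilde L q_x,v}+\Re\iprod{\tilde L_x q,v}+\Re\iprod{(S_x/S)\tilde L q,v}.
\]
The leading contribution $\Re\iprod{\tilde L q_x,v}$, moved to the left-hand side, produces precisely the $-\Re\iprod{\tilde L q_x,v}$ term appearing in \eqref{goodineq2}. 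The second piece, $-\Re\iprod{\tilde L\tilde B v_x,v_x}$, is nonpositive because $\iprod{\tilde L\tilde B v_x,v_x}\ge 0$ by the rank-one positive semidefinite structure of $A_0LB$ in \eqref{S1} (transferred to the diagonalized variables via the normalization $L_pR_p=I$), hence it too may be discarded.

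Every remaining term on the right carries at least one factor drawn from $\{\tilde A_x,\tilde L_x,(L_p)_x,(R_p)_x,S_x/S,(\tilde A(L_p)_xR_p)_x\}$, each of which is $\cO(|U_x|)$ (with second-derivative contributions $\cO(|U_{xx}|)=\cO(\epsilon)\cO(|U_x|)$). Terms bilinear in $v_x$, of the form $\cO(|U_x|)\iprod{v_x,v_x}$, are absorbed into $\eta\iprod{v_x,v_x}$ for $\epsilon$ sufficiently small. Terms coupling $v$ with $v_x$, or involving $q,q_x$ paired with $v$ or $v_x$, are handled by Young's inequality together with the Friedrichs bound \eqref{friedRe} (which gives $|q|_{L^2}^2+|q_x|_{L^2}^2\le C\int|U_x||u|^2\,dx$); both types ultimately produce contributions dominated by $C\iprod{\cO(|U_x|^2)v,v}+\eta\iprod{v_x,v_x}$.

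The main obstacle is not the overall strategy but the careful bookkeeping of these many error terms, and in particular the verification that $\Re\iprod{\tilde L\tilde B v_x,v_x}$ has the favorable sign in the weighted pairing: once the weight $S$ and the diagonalizers $L_p,R_p$ are fixed consistently with the symmetric positive semidefinite structure encoded in \eqref{S1}, this sign works out and collecting the bounds above yields \eqref{goodineq2}.
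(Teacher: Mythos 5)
Your proposal follows essentially the same strategy as the paper's proof of Proposition~\ref{prop-1stest}: differentiate the first equation of \eqref{e-eqsv}, pair with $v_x$ in the $S$-weighted inner product, substitute $q_{xx}=q+\tilde B v_x + B(R_p)_x v$ from the second equation, integrate $\iprod{\tilde L q,v_x}$ by parts to produce the cancellation term $-\Re\iprod{\tilde L q_x,v}$, and drop the favorably-signed $\iprod{\tilde L\tilde B v_x,v_x}$ term. The one genuine deviation is in how you control the residual $q$- and $q_x$-error terms: you invoke the Friedrichs bound \eqref{friedRe}, $|q|_{L^2}^2+|q_x|_{L^2}^2\le C\int|U_x||u|^2\,dx$, whereas the paper tests the second equation of \eqref{e-eqsv} directly against $q$ to obtain the cleaner bound $\iprod{q_x,q_x}+\iprod{q,q}\le C\iprod{v_x,v_x}$, which feeds straight into the form $C\iprod{\cO(|U_x|^2)v,v}+\eta\iprod{v_x,v_x}$ as stated. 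Your route produces error terms of the slightly weaker form $\cO(\epsilon)\iprod{|U_x|v,v}$ rather than strictly $\iprod{\cO(|U_x|^2)v,v}$ (since $\epsilon^2|U_x|$ is not pointwise dominated by $|U_x|^2$ in the tails of the profile), so it does not literally reproduce \eqref{goodineq2} as written; but since such terms are still absorbed into $\theta\iprod{|U_x|v,v}$ in Lemma~\ref{lem-Goodman}, the spectral-stability conclusion is unaffected, and the paper's own choice of $q$-bound is mostly a matter of keeping the intermediate statement tidy. A second cosmetic difference is that you explicitly discard the sign-favorable part of $\Re\iprod{(\tilde A v_x)_x,v_x}$ via \eqref{anest-S}, while the paper simply absorbs the whole term as $\iprod{\cO(|U_x|)v_x,v_x}$; either is fine.
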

\begin{proof}
We now take the inner product of the derivative of the first
equation of \eqref{e-eqsv} against $v_x$. We thus obtain
\begin{equation}\label{key2}\lambda \iprod{v_x,v_x} + \iprod{(\tilde A v_x)_x,v_x} +\iprod{(\tilde Lq_x)_x,v_x}
= \iprod{(\tilde A
L_p'R_p v)_x,v_x}\end{equation}
where we estimate by integration by parts,
$$\begin{aligned} \iprod{(\tilde A v_x)_x,v_x} &= \iprod{\tilde A_x
v_x,v_x} -\frac12 \iprod{(\tilde A_x +(S_x/S)\tilde A)v_x,v_x} = \iprod{\cO(|U_x|) v_x,v_x}\\\iprod{(\tilde A L_p'R_p v)_x,v_x} &= \iprod{\cO(|U_x|) v_x,v_x} + \iprod{\cO(|U_x|) v,v_x}
\end{aligned}$$
and by using the second equation and the semi-definite condition $\tilde L\tilde B \ge 0$,
$$\begin{aligned} \iprod{(\tilde Lq_x)_x,v_x} &= \iprod{\tilde Lq_{xx},v_x}+\iprod{\tilde
L_xq_x,v_x}\\
&= \iprod{\tilde L (q+\tilde B v_x+ BR_p' v),v_x}+\iprod{\tilde
L_xq_x,v_x}
\\
&= -\iprod{\tilde Lq_x,v}-\iprod{(\tilde L_x+(S_x/S)\tilde L)q,v}+\iprod{\tilde
L\tilde B v_x,v_x}+\iprod{\tilde L BR_p' v,v_x}+\iprod{\tilde
L_xq_x,v_x}
\\
&\ge -\iprod{\tilde Lq_x,v}+\iprod{\tilde
LBR_p' v,v_x}+\iprod{\tilde L_xq_x,v_x} -\iprod{(\tilde L_x+(S_x/S)\tilde L)q,v}.
\end{aligned}$$

Thus, \eqref{key2} yields
\begin{equation}\label{key3}\begin{aligned}(\Re\lambda) &\iprod{v_x,v_x} -\Re\iprod{\tilde Lq_x,v}
\\&\le \iprod{\cO(|U_x|^2) v,v}+\eta\iprod{v_x,v_x}+\iprod{\tilde L_xq_x,v_x}
-\iprod{(\tilde L_x+(S_x/S)\tilde L)q,v}.\end{aligned}\end{equation}

By testing the second equation against $q$, it is easy to see that
$$\iprod{q_x,q_x} + \iprod{q,q} \le C\iprod{v_x,v_x}.$$
Thus, we have $$\iprod{\tilde L_xq_x,v_x} -\iprod{(\tilde L_x+(S_x/S)\tilde L)q,v}\le
C\iprod{v_x,v_x}^{1/2}\Big(\iprod{\cO(|U_x|^2)v_x,v_x}+\iprod{\cO(|U_x|^2)v,v}\Big)^{1/2}
$$

Using the standard Young's inequality and absorbing all necessary
terms into the right hand side of \eqref{key3}, we thus obtain from
\eqref{key3} the important estimate, \eqref{goodineq2}, which proves
the proposition.
\end{proof}
%ENDCHANGED

Combining Propositions \ref{prop-0thest} and \ref{prop-1stest}, we
are now ready to give:
\begin{proof}[Proof of Lemma \ref{lem-Goodman}]
Adding \eqref{goodineq2} with \eqref{goodineq1}, noting that the
``bad'' term $\Re\iprod{\tilde Lq_x,v}$ gets canceled out, and using
the fact that $|U_x| = \cO(\epsilon)$ is sufficiently small, we easily obtain
\begin{equation}\label{finalineq}\begin{aligned}\Re \lambda (\iprod{v,v}+\iprod{v_x,v_x})&
+ \theta\iprod{|U_x|v,v} \le \eta \iprod{v_x,v_x}\end{aligned}\end{equation}
which by changing $v$ to the original coordinate $u$ yields the lemma.
\end{proof}

\subsection*{Proof of Theorem \ref{spectralstability}}

Add $\hat C\epsilon$ times \eqref{k41} to \eqref{g30} to get
\[
(\Re \lambda)(1 + \hat C\epsilon) |u|^2_{L^2} + (\hat C + C\hat C\epsilon) \int |U_x||u|^2 \, dx \leq 0,
\]
which readily implies $\Re \lambda < 0$, yielding the result.
\qed

\begin{remark}
 \label{nonconvexspect}
%CHANGED
% The spectral stability result for small amplitudes can be extended to the non-convex case, that is, when the principal characteristic mode is no longer genuinely nonlinear (hypothesis \eqref{H2} does not hold). For that purpose, it is possible to modify the Goodman-type weighted energy estimate by introducing the Matsumura-Nishihara weight function $w$ (see \cite{MN2}), which satisfies
% \[
%  -\half(w a_p + w_x) = |U_x|,
% \]
% (introduced to compensate for the loss of monotonicity, as \eqref{monotonicity} is no longer valid), and which replaces the $1$ in the weight matrix function $S$ in \eqref{weightS}. This procedure was carried out for the viscous systems case by Fries \cite{Fri1} and it can be done in the present case as well at the expense of further bookkeeping. Note that the existence result of \cite{LMS1,LMS2} includes non-convex systems, a feature that might be useful in applications.
Theorem \ref{spectralstability} can be extended to the non-convex case, that is, when the principal characteristic mode is no longer genuinely nonlinear (hypothesis \eqref{H2} does not hold). For that purpose, it is possible to modify the Goodman-type weighted energy estimate by means of the Matsumura-Nishihara weight function $w$ \cite{MN2} (introduced to compensate for the loss of monotonicity), satisfying
\[
 -\half(w a_p + w_x) = |U_x|,
\]
which replaces the $1$ in the weight matrix function $S$ in \eqref{weightS}. This procedure was carried out for the viscous systems case by Fries \cite{Fri1} and it can be done in the present case as well at the expense of further book-keeping. Note that the existence result of \cite{LMS1,LMS2} includes non-convex systems, a feature that might be useful in applications.
%ENDCHANGED
\end{remark}

\section{Pointwise reduction lemma}\label{firstAppendix}
Let us consider the situation of a system of equations of form
\begin{equation} \label{eq:firstorder}
	W_x = \A^\epsilon(x,\lambda)W,
\end{equation}
for which the coefficient $\A^\epsilon$ does not exhibit uniform
exponential decay to its asymptotic limits, but instead is {\it
slowly varying} (uniformly on a $\epsilon$-neighborhood $\cV$, being
$\epsilon>0$ a parameter).
This case occurs in different contexts for rescaled equations, such
as \eqref{eq:strechtedsyst} in the present analysis.

In this situation, it frequently occurs that not only $\A^\epsilon$
but also certain of its invariant eigenspaces are slowly varying
with $x$, i.e., there exist matrices
\begin{equation*}
	{\mathbb{L}}^\epsilon=\begin{pmatrix} L^\epsilon_1 \\
	L^\epsilon_2\end{pmatrix}(x), \quad \R^\epsilon=\begin{pmatrix}
	R^\epsilon_1 & R^\epsilon_2\end{pmatrix} (x) \label{LR}
\end{equation*}
for which ${\mathbb{L}}^\epsilon \R^\epsilon(x)\equiv I$ and
$|{\mathbb{L}}\R'|=|{\mathbb{L}}'\R|\le C\delta^\epsilon(x)$, uniformly in $\epsilon$,
where the pointwise error bound $\delta^\epsilon =
\delta^\epsilon(x)$ is small, relative to
\begin{equation}\label{M}
	{\mathbb{M}}^\epsilon:= {\mathbb{L}}^\epsilon \A^\epsilon \R^\epsilon(x)
	=\begin{pmatrix} M^\epsilon_1 & 0 \\
		0 & M^\epsilon_2 \end{pmatrix}(x)
\end{equation}
and ``$'$'' as usual denotes $\partial/\partial x$.
In this case, making the change of coordinates $W^\epsilon=\R^\epsilon Z$, we
may reduce \eqref{eq:firstorder} to the approximately block-diagonal equation
\begin{equation}\label{eq:blockdiag}
	{Z^\epsilon}'= {\mathbb{M}}^\epsilon Z^\epsilon + \delta^\epsilon
	\Theta^\epsilon Z^\epsilon,
\end{equation}
where ${\mathbb{M}}^\epsilon$ is as in \eqref{M}, $\Theta^\epsilon(x)$ is a
uniformly bounded matrix, and $\delta^\epsilon(x)$ is (relatively) small.
Assume that such a procedure has been successfully carried
out, and, moreover, that there exists an approximate {\it uniform
spectral gap in numerical range}, in the strong sense that
\begin{equation*} \label{eq:gap}
	\min \sigma(\Real M_1^\epsilon)- \max \sigma(\Real M_2^\epsilon)
	\ge \eta^\epsilon(x), \qquad \textrm{\rm for all } x,
\end{equation*}
with pointwise gap $\eta^\epsilon(x) > \eta_0 > 0$
uniformly bounded in $x$ and in $\epsilon$; here and elsewhere
$\Real N:= \half (N+N^*)$ denotes the ``real'', or symmetric part of an operator $N$.
Then, there holds the following {\it pointwise reduction lemma}, a refinement of
the reduction lemma of \cite{MaZ3} (see the related ``tracking lemma'' given in
varying degrees of generality in \cite{GZ,MaZ1,PZ,ZH,Z3}).
\medskip

\begin{proposition}\label{pwrl}
Consider a system \eqref{eq:blockdiag} under the gap
assumption \eqref{eq:gap}, with $\Theta^\epsilon$ uniformly bounded
in $\epsilon \in \cV$ and for all $x$.  If, for all $\epsilon \in
\cV$, $\sup_{x \in \R} (\delta^\epsilon/\eta^\epsilon)$ is
sufficiently small (i.e., the ratio of pointwise gap
$\eta^\epsilon(x)$ and pointwise error bound $\delta^\epsilon(x)$ is
uniformly small), then there exist (unique) linear transformations
$\Phi_1^\epsilon(x,\lambda)$ and $\Phi_2^\epsilon(x,\lambda)$,
possessing the same regularity with respect to the various
parameters $\epsilon$, $x$, $\lambda$ as do coefficients
${\mathbb{M}}^\epsilon$ and $\delta^\epsilon(x)\Theta^\epsilon(x)$, for which
the graphs $\{(Z_1, \Phi^\epsilon_2 (Z_1))\}$ and
$\{(\Phi^\epsilon_1(Z_2),Z_2)\}$ are invariant under the flow of
\eqref{eq:blockdiag}, and satisfying
\begin{equation*}
	\sup_\R |\Phi^\epsilon_j| \, \le \, C \sup_\R (\delta^\epsilon/\eta^\epsilon).
\end{equation*}
Moreover, we have the pointwise bounds
\begin{equation}\label{ptwise}
	|\Phi^\epsilon_2(x)|\le C \int_{-\infty}^x e^{-\int_y^x
	\eta^\epsilon(z)dz} \delta^\epsilon(y) dy,
\end{equation}
and symmetrically for $\Phi^\epsilon_1$.
\end{proposition}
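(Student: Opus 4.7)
The plan is to recast invariance of the graph $\{(Z_1,\Phi_2^\epsilon Z_1)\}$ under \eqref{eq:blockdiag} as a Riccati-type equation for $\Phi:=\Phi_2^\epsilon$ and to solve it by Banach contraction in a \emph{weighted} ball that encodes the pointwise bound \eqref{ptwise}. Writing $\Theta^\epsilon=(\Theta_{ij})_{i,j=1,2}$ in block form and differentiating $Z_2=\Phi Z_1$ along the flow while using $Z_1'=(M_1^\epsilon+\delta^\epsilon\Theta_{11}+\delta^\epsilon\Theta_{12}\Phi)Z_1$, one arrives at the Riccati equation
\begin{equation}\label{pwrl-ricc}
\Phi'=M_2^\epsilon\Phi-\Phi M_1^\epsilon+\delta^\epsilon\mathcal{N}(\Phi),\qquad \mathcal{N}(\Phi):=\Theta_{21}+\Theta_{22}\Phi-\Phi\Theta_{11}-\Phi\Theta_{12}\Phi.
\end{equation}

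Let $S_j^\epsilon(x,y)$ denote the fundamental solution of $\psi'=M_j^\epsilon\psi$. The gap hypothesis \eqref{eq:gap}, combined with the standard pointwise identity $\tfrac{d}{dx}|S_j^\epsilon(x,y)v|^2=2\langle\Real M_j^\epsilon S_j^\epsilon v,S_j^\epsilon v\rangle$, yields for $x\ge y$ the conjugation bound
$$|S_2^\epsilon(x,y)\,W\,S_1^\epsilon(y,x)|\le e^{-\int_y^x\eta^\epsilon(z)\,dz}|W|.$$
Variation of parameters applied to \eqref{pwrl-ricc}, with boundedness imposed as $x\to-\infty$, leads to the fixed-point formulation
\begin{equation}\label{pwrl-fixpt}
\Phi(x)=\cT[\Phi](x):=\int_{-\infty}^x S_2^\epsilon(x,y)\,\delta^\epsilon(y)\,\mathcal{N}(\Phi)(y)\,S_1^\epsilon(y,x)\,dy,
\end{equation}
which I would solve by Banach contraction on the weighted ball
$$\mathcal{B}_\kappa:=\big\{\Phi\in C(\R):|\Phi(x)|\le\kappa\,\omega^\epsilon(x)\text{ for all }x\in\R\big\},\quad \omega^\epsilon(x):=\int_{-\infty}^x e^{-\int_y^x\eta^\epsilon(z)\,dz}\delta^\epsilon(y)\,dy,$$
with $\kappa=2\sup|\Theta_{21}|$. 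A direct computation gives $\omega^\epsilon(x)\le\sup_y(\delta^\epsilon/\eta^\epsilon)(y)$, so elements of $\mathcal{B}_\kappa$ are uniformly small; the $\Phi$-linear and $\Phi$-quadratic parts of $\mathcal{N}(\Phi)$ then pick up extra factors of $\sup(\delta^\epsilon/\eta^\epsilon)$ inside \eqref{pwrl-fixpt} relative to the affine term $\Theta_{21}$ and are absorbed, so that $\cT$ maps $\mathcal{B}_\kappa$ into itself contractively. The unique fixed point is the desired $\Phi_2^\epsilon$, which by construction satisfies \eqref{ptwise}; the companion $\Phi_1^\epsilon$ is produced symmetrically by parametrizing the invariance $Z_1=\Phi_1 Z_2$ and integrating the corresponding Riccati equation from $+\infty$.

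Regularity of the fixed point in $(x,\lambda,\epsilon)$ is inherited from $\mathbb{M}^\epsilon$ and $\Theta^\epsilon$ via standard smooth dependence of Banach contractions on parameters, and uniqueness within bounded graphs follows because the difference of any two such invariant $\Phi$'s satisfies a linear homogeneous equation whose bounded solutions decay like $e^{-\int_y^x\eta^\epsilon}$ and must therefore vanish. The main technical subtlety is precisely the upgrade from a uniform bound to the pointwise bound \eqref{ptwise}: this is what forces the weighted-ball formulation, and it hinges on the observation that \emph{every} nonlinear-in-$\Phi$ contribution to $\mathcal{N}(\Phi)$ is multiplied by $\delta^\epsilon(y)$ inside \eqref{pwrl-fixpt}, supplying the extra factor of $\sup(\delta^\epsilon/\eta^\epsilon)$ that closes the pointwise bootstrap.
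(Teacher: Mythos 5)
Your proof is correct, and it follows the same essential route as the paper: derive the Riccati equation for $\Phi_2^\epsilon$, use the gap hypothesis to obtain the exponential conjugation bound $|S_2^\epsilon(x,y)WS_1^\epsilon(y,x)|\le e^{-\int_y^x\eta^\epsilon}|W|$, and solve the resulting integral fixed-point equation \eqref{pwrl-fixpt} — which is exactly the operator $\mathcal T$ appearing in the paper's proof, with your $S_2^\epsilon(x,y)\cdot S_1^\epsilon(y,x)$ playing the role of the conjugated propagator $\cF^{y\to x}$ and your $\mathcal N(\Phi)=\Theta_{21}+\Theta_{22}\Phi-\Phi\Theta_{11}-\Phi\Theta_{12}\Phi$ matching $Q(\Phi)=\cO(1+|\Phi|^2)$. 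The one organizational difference worth noting: the paper first rescales the independent variable to make $\eta^\epsilon$ constant, invokes the sup-norm contraction of \cite{MaZ3} to get existence and uniform boundedness, and then bootstraps the pointwise bound \eqref{ptwise} by reinserting the known sup bound into the integral formula before undoing the rescaling. You instead run the contraction directly in the weighted ball $\mathcal B_\kappa$ with weight $\omega^\epsilon$, which produces \eqref{ptwise} in a single pass and avoids the coordinate change. Both are valid; your weighted-ball formulation makes the pointwise bound structural rather than a posteriori, at the modest cost of verifying the invariance and contraction estimates in the $\omega^\epsilon$-weighted norm (which you do correctly, using $\omega^\epsilon\le\sup(\delta^\epsilon/\eta^\epsilon)$ and $\int_{-\infty}^x e^{-\int_y^x\eta^\epsilon}\eta^\epsilon(y)\,dy=1$).
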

\medskip

\begin{proof}
By a change of independent coordinates, we may arrange that $\eta^\epsilon(x)\equiv$
constant, whereupon the first assertion reduces to the conclusion of the tracking/reduction
lemma of \cite{MaZ3}.
Recall that this conclusion was obtained by seeking $\Phi^\epsilon_2$ as the solution
of a fixed-point equation
\begin{equation*}
	\Phi^\epsilon_2(x)= {\mathcal T }\Phi^\epsilon_2(x):= \int_{-\infty}^x
	\cF^{y\to x} \delta^\epsilon(y) Q(\Phi^\epsilon_2)(y) dy.
\end{equation*}
Observe that in the present context we have allowed $\delta^\epsilon$ to vary with $x$,
but otherwise follow the proof of \cite{MaZ3} word for word to obtain the conclusion
(see Appendix C of \cite{MaZ3}, proof of Proposition 3.9).
Here, $Q(\Phi^\epsilon_2)=\cO(1+|\Phi^\epsilon_2|^2)$ by construction, and
$|\cF^{y\to x}|\le Ce^{-\eta(x-y)}$.
Thus, using only the fact that $|\Phi^\epsilon_2|$ is bounded, we obtain the
bound \eqref{ptwise} as claimed, in the new coordinates for which $\eta^\epsilon$
is constant.
Switching back to the old coordinates, we have instead
$|\cF^{y\to x}|\le Ce^{-\int_y^x \eta^\epsilon(z)dz}$, yielding the result in the general case.
\end{proof}
\medskip

\begin{remark}\label{rem:reduced}\rm
From Proposition \ref{pwrl}, we obtain reduced flows
\begin{equation*}
	\left\{\begin{aligned}
		{Z_1^\epsilon}' &= M_1^\epsilon Z_1^\epsilon + \delta^\epsilon(
			\Theta_{11} + \Theta_{12}^\epsilon \Phi_2^\epsilon) Z_1^\epsilon,\\
		{Z_2^\epsilon}' &= M_2^\epsilon Z_2^\epsilon +
			\delta^\epsilon(\Theta_{22}+ \Theta_{21}^\epsilon \Phi_1^\epsilon)
			Z_2^\epsilon.
	\end{aligned}\right.
\end{equation*}
on the two invariant manifolds described.
\end{remark}

\end{document}